\documentclass[12pt,oneside]{amsart}

\usepackage{amsmath}

\calclayout

\usepackage{amsfonts,amssymb,amsthm,enumitem}
\usepackage{mathtools}
\usepackage{nicefrac,setspace}
\usepackage{xcolor}
\usepackage{tikz}
\usepackage{circuitikz}
\usepackage{amsmath}

\usepackage[draft]{commenting}

\usepackage{bm}

\usepackage{hyperref}       
\usepackage{url}            
\usepackage{booktabs}       
\usepackage{amsfonts}       
\usepackage{nicefrac}       
\usepackage[expansion=false]{microtype}
\usepackage{bookmark}
\numberwithin{equation}{section}
\newtheorem{theorem}{Theorem}[section]

\newtheorem{claim}[theorem]{Claim}
\newtheorem{definition}[theorem]{Definition}

\newtheorem{lemma}[theorem]{Lemma}
\newtheorem{corollary}[theorem]{Corollary}

\newtheorem{prop}[theorem]{Proposition}

\newtheorem{proposition}[theorem]{Proposition}
\newtheorem{remark}[theorem]{Remark}
\newtheorem*{corollary*}{Corollary}
\newtheorem*{theorem*}{Theorem}
\newtheorem*{claim*}{Claim}
\newtheorem*{remark*}{Remark}
\newtheorem*{lemma*}{Lemma}

\newcommand{\showcomments}{yes}

\newsavebox{\commentbox}
{\ifthenelse{\equal{\showcomments}{yes}}%
{\footnotemark
    \begin{lrbox}{\commentbox}
    \begin{minipage}[t]{1.25in}\raggedright\sffamily\tiny
    \footnotemark[\arabic{footnote}]}
{\begin{lrbox}{\commentbox}}}%
{\ifthenelse{\equal{\showcomments}{yes}}%
{\end{minipage}\end{lrbox}\marginpar{\usebox{\commentbox}}}
{\end{lrbox}}}

\declareauthor{ms}{Mike}{blue}
\declareauthor{gy}{Gal}{red}

\subjclass[2020]{Primary 20F67; Secondary 20F65, 20F69, 11B30, 43A15.}

\keywords{approximate groups, approximate semigroups, hyperbolic groups,
product growth, product-set estimates, growth rates, Rapid Decay}
\date{\today}

\title{Growth of Approximate Groups in Hyperbolic Groups}
\author{Michael Saks}
\address{Dept of Math, Rutgers University, New Brunswick, USA}
\email{saks@math.rutgers.edu}

\author{Gal Yehuda}
\address{Dept of Math, Yale, New Haven, USA}
\email{gal.yehuda@yale.edu}

\begin{document}
\vspace*{-1cm} 

\begin{abstract}
We prove a growth dichotomy for infinite approximate groups, and more generally approximate semigroups, in hyperbolic groups. If \(G\) is a finitely generated hyperbolic group and \(A\subseteq G\) is infinite with
\[
        A^2\subseteq AX
\]
for some finite \(X\subseteq G\), then either \(\langle A\rangle\) is virtually cyclic, or \(A\) has positive exponential growth in the ambient word metric.

We also introduce a product-growth criterion for the existence of growth rates of approximate semigroups. The criterion applies to hyperbolic groups: if \(G\) is hyperbolic with finite generating set \(S\), then there is a constant \(c_{G,S}>0\) such that
\[
        |UV| \geq c_{G,S}\,\frac{|U||V|}{n+k+1},
        \qquad U\subseteq B_n,\; V\subseteq B_k.
\]
The linear loss is optimal in order whenever \(G\) contains an element of infinite order. In the free group with its standard generating set one may take \(c_{G,S}=1/4\). We also prove that, in a free group, if \(U\subseteq S_n\) and \(V\subseteq S_k\), then
\[
        |UV|\geq
        \left(\frac{2}{3}+\frac{1}{3\cdot 4^{\min\{n,k\}}}\right)|U||V|,
\]
and this constant is sharp for all \(n,k\).
\end{abstract}
\maketitle

\vspace{-.4cm}

\section{Introduction}
Approximate groups are one of the basic objects of modern additive
combinatorics. In the finite theory, small doubling forces strong algebraic
structure: foundational work of Hrushovski and of Breuillard--Green--Tao shows
that finite approximate groups are controlled by nilpotent models \cite{hrushovski2012stable,breuillard2012structure}. 
Much less is known about the structure of approximate groups in infinite groups. In particular, it is natural to ask what the small-doubling
relation alone implies about the large-scale geometry and asymptotic growth of
an infinite subset of a finitely generated group.

This paper studies that question in the negatively curved setting. Our main
result shows that, inside a hyperbolic group, an infinite set satisfying the
one-sided small-doubling relation
\[
        A^2\subseteq AX
\]
already obeys a sharp qualitative alternative: either it generates a virtually
cyclic subgroup, or it has positive exponential growth in the ambient word
metric. No geometric finite generation, hull construction, uniform
discreteness, approximate-lattice hypothesis, or a priori action is assumed.
The conclusion is extracted directly from the internal product-set relation
\(A^2\subseteq AX\) in the Cayley graph.

A second theme of the paper is additive-combinatorial. We study how small an
ordinary product set \(UV\) can be when \(U\) and \(V\) are arbitrary finite
subsets of balls in a finitely generated group. This leads to a product-growth
criterion which implies the existence of growth rates for approximate
semigroups, and to sharp product-set estimates in free and hyperbolic groups.
Thus the paper connects three phenomena: small doubling of infinite sets,
product growth of finite subsets of balls, and exponential growth of ambient
counting functions.

More precisely, let \(G\) be a finitely generated group with a fixed finite generating set, and let
\(B_n\) denote the ball of radius \(n\) around the identity in the corresponding Cayley
graph.  For a subset \(A\subseteq G\), we study the counting function
\[
        n\longmapsto |A\cap B_n|.
\]
Some basic questions of interest are:
\begin{itemize}
    \item Does \(A\) have \emph{positive exponential growth in \(G\)}, i.e. do there exist constants
\(C>0\) and \(\lambda>1\) such that
\[
        |A\cap B_n|\ge C\lambda^n,
\]
or  equivalently, 
\[
        \liminf_{n\to\infty}\frac{1}{n}\log |A\cap B_n|>0.
\]
\item Does $A$ have a \emph{well-defined exponential growth rate}, i.e., does
\[\lim_{n\to\infty}\frac{1}{n}\log|A \cap B_n|\] 
exist? 
(Note that this limit may be 0, in which case $A$ has a well-defined exponential growth rate but does not have positive exponential growth.)
\end{itemize}

When \(A\) is a subgroup, these questions are instances of the classical theory of
relative subgroup growth.  If \(H\leq G\), one studies
\[
        \gamma_H^G(n)=|H\cap B_n|.
\]
For subgroups of hyperbolic groups, the existence of the relative exponential growth
rate is known in considerable generality.  Schesler proved that
\[
        \lim_{n\to\infty}\sqrt[n]{\gamma_H^G(n)}
\]
exists for every subgroup \(H\) of an acylindrically hyperbolic group \(G\) which
contains a loxodromic element \cite{schesler2022relative}.  Together with the
standard subgroup dichotomy in hyperbolic groups, this implies that subgroups of
hyperbolic groups have well-defined relative exponential growth rates; the rate is
greater than \(1\) precisely in the non-elementary, equivalently non-virtually-cyclic,
case \cite{kapovich2002boundaries}.  For free groups, the possible values are now
understood much more finely: it was proven in \cite{louvaris2024density,louvaris2025every} that the set of growth
rates of finitely generated subgroups of \(F_r\) is dense in \([1,2r-1]\), and later
that every value in this interval is realized by some subgroup, in general infinitely
generated.

In this paper we investigate the above questions for approximate groups, and more generally, approximate semigroups.  
A subset \(A\) of a group \(G\) is an \emph{approximate semigroup} if there is a finite subset \(X\subseteq G\) such that
\[
        A^2\subseteq AX,
\]
and it is an \emph{approximate group} if, in addition, \(1\in A\) and \(A=A^{-1}\).

We use the right-handed convention \(A^2\subseteq AX\).  For approximate groups in the usual sense this is equivalent to the left-handed convention \(A^2\subseteq XA\): taking inverses changes \(A^2\subseteq AX\) into \(A^2\subseteq X^{-1}A\).  For approximate semigroups the two conventions are not literally equivalent in a fixed group, but the results and proofs in this paper are insensitive to this choice; the left-handed version is obtained by passing to the opposite group.  We use the right-handed convention because it has the following geometric interpretation.  If \(K=\max_{x\in X}|x|\) and \(ab=a'x\), with \(a'\in A\) and \(x\in X\), then
\[
        d(ab,a')=|x|\le K.
\]
Thus every product of two elements of \(A\) lies a uniformly bounded distance from \(A\) in the Cayley graph.

For approximate groups and approximate semigroups, answers to the above basic asymptotic questions are not automatic.  The relation \(A^2\subseteq AX\) does not make \(A\)
a subgroup, and the counting function \(n\mapsto |A\cap B_n|\) need not satisfy the usual submultiplicative estimates available for finitely generated groups.  Thus one is led to ask for which ambient groups \(G\) approximate groups \(A\subseteq G\) have a well-defined exponential growth rate, and, in the hyperbolic setting, whether positive exponential growth is characterized by a natural algebraic condition on
\(\langle A\rangle\).  A point of the results below is that these conclusions are obtained directly from the internal small-doubling relation \(A^2\subseteq AX\) in
the ambient Cayley graph.

Our first main result is a growth dichotomy in hyperbolic groups.

\begin{theorem}[Hyperbolic growth dichotomy]
\label{thm:hyp dichotomy}
Let \(G\) be a finitely generated hyperbolic group, and let \(A\subseteq G\) be an approximate semigroup. Then either
\begin{enumerate}[label=\textup{(\arabic*)}]
    \item \(A\) has positive exponential growth in \(G\), or
    \item \(\langle A\rangle\) is virtually cyclic.
\end{enumerate}
\end{theorem}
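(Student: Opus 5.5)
Since \(A^2\subseteq AX\) gives \(A^2\subseteq A X\), we can iterate: \(A^m\subseteq AX_m\), where \(X_m\) is a finite set with \(|x|\le (m-1)K\) for \(x\in X_m\), and \(K=\max_{x\in X}|x|\). For a word \(a_1\cdots a_m\) in elements of \(A\) with \(|a_i|\le n\), this produces an element of \(A\) of length at most \(mn+(m-1)K\), and it lies within distance \((m-1)K\) of the product. The plan is to exploit this as an injectivity-up-to-bounded-error statement. Suppose \(\langle A\rangle\) is not virtually cyclic. A subgroup of a hyperbolic group that is not virtually cyclic contains two independent loxodromic elements \(g,h\), hence a free subsemigroup. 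The first step is to produce such loxodromics as bounded products of elements of \(A\) itself, not merely of \(A\cup A^{-1}\), because only positive words are controlled by the approximate semigroup relation. The tool is the ping-pong and "short loxodromic" lemmas of hyperbolic geometry (Koubi, Breuillard--Fujiwara). Since \(A\) is infinite, it contains elements of arbitrarily large length. If every element of \(A\) were elliptic and \(A\) did not have two independent limit directions, then I would argue that \(A\) stays within a bounded neighbourhood of a quasi-geodesic line, forcing \(\langle A\rangle\) to be elementary. Otherwise, a standard argument gives \(a,b\in A\) (or products of at most two or three of them) such that \(u=ab, v=ba^2\) generate a free semigroup quasi-isometrically: distinct positive words in \(u,v\) of length \(m\) are distinct and have Gromov-products with controlled overlap.

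The counting step goes as follows. Fix such \(u,v\in A^{r}\) with \(|u|,|v|\le L\), and the associated semigroup words \(w\in\{u,v\}^m\). There are \(2^m\) of them, each in \(A^{rm}\), hence each lies within distance \((rm-1)K\) of some element of \(A\cap B_{m(L+rK)}\). This crude bound is bad, since the error grows linearly in \(m\) and the number of points in a ball of radius \(rmK\) is exponential in \(m\). I will avoid this by renormalizing. Rather than iterating \(A^m\subseteq AX_m\) with growing \(X_m\), I will use the semigroup relation one letter at a time, keeping the error bounded. Define \(a_0\in A\) and inductively \(a_{j+1}\in A\) with \(a_j s_{j+1}=a_{j+1}x_{j+1}\), where \(s_{j+1}\in\{u,v\}\) is written as a product of \(r\) elements of \(A\) and absorbed through \(r\) applications of \(A^2\subseteq AX\). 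Then \(a_{j+1}\) is within bounded distance \(rK\) of \(a_js_{j+1}\). Hyperbolicity is essential here. By the ping-pong choice, making \(|u|,|v|\) large compared to \(rK\) and \(\delta\), the local-to-global principle for quasi-geodesics implies that the sequence \(a_0,a_1,\dots\) is a quasi-geodesic whose direction at step \(j\) is determined by \(s_j\). Consequently, distinct sequences \((s_1,\dots,s_m)\) give distinct endpoints \(a_m\), because they diverge at the first differing letter and never reconverge. Moreover, each endpoint lies in \(A\cap B_{Cm}\). This gives \(|A\cap B_{Cm}|\ge 2^m\), which is positive exponential growth.

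The main obstacle is the first step: producing from \(A\) itself two independent loxodromics of length much larger than \(K\) whose positive words are quasi-geodesic. A subtlety is that \(u,v\) must be chosen \emph{after} \(K\) is fixed, with translation lengths dominating \(K\). My plan is to take a loxodromic \(g\in\langle A\rangle\), note that \(A\) accumulates on at least three boundary points unless \(\langle A\rangle\) is elementary, and use the approximate semigroup relation to show that \(A\) is coarsely closed under products. Then large powers of products of elements of \(A\) pointing toward different boundary points, corrected by elements of \(A\) only through a bounded error, serve as \(u,v\). The virtually cyclic alternative arises exactly when the limit set of \(A\) has at most two points.
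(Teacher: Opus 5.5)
Your proposal has a genuine gap at the step you flag as the main obstacle, and the mechanism you suggest for it cannot work. Your scheme needs $u,v\in A^r$ with $|u|,|v|$ large compared with $rK$, as you say yourself. The same holds for any projection argument. What is needed is translation length \emph{per factor of $A$} exceeding a fixed multiple of $K$.

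Large powers do not provide this. If $h\in A^q$ is loxodromic, then $h^N$ uses $qN$ factors, and $|h^N|/(qN)\to\tau(h)/q$ independently of $N$, which may well be below $K$. Short-loxodromic results of Koubi or Breuillard--Fujiwara type give no such per-factor bound either.

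The missing idea is the paper's Lemma~\ref{lem:hyp-unbounded-A2}: $\sup_{h\in A^2}\tau(h)=\infty$. Its proof uses that $A$ itself is infinite, through a boundary trichotomy:
\begin{itemize}
\item a bi-Gromov sequence $(a_n)$ in $A$ not asymptotic to $(a_n^{-1})$ gives $\tau(a_n)\to\infty$;
\item two non-asymptotic Gromov sequences in $A$, each asymptotic to its inverse sequence, give $\tau(a_nb_n)\to\infty$;
\item if all Gromov sequences in $A$ converge to one point $\alpha$, some $a\in A$ moves $\alpha$, and $\tau(at_n)\to\infty$.
\end{itemize}
Only once $g=g_1g_2\in A^2$ with $\tau(g)>3K_0$ is fixed do powers become harmless. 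The paper then uses $g^m$ and $ag^m$, with $a\in A$ moving $\{g^+,g^-\}$.

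Second, the letter-by-letter renormalization does not keep the error bounded. Unwinding $a_js_{j+1}=a_{j+1}x_{j+1}$ gives $a_0^{-1}a_m=t_1\cdots t_m$ with $t_j=s_jx_j^{-1}$ and $x_j\in X^r$ dictated by the approximate-semigroup structure. The junction condition for the local-to-global principle is then a bound on $(t_j^{-1}\mid t_{j+1})=(x_js_j^{-1}\mid s_{j+1}x_{j+1}^{-1})$. Up to $O(rK)$, this compares the direction of $s_j^{-1}$ with the $x_j^{-1}$-translate of the direction of $s_{j+1}$. Nothing in the ping-pong choice of $u,v$ controls that, so the claimed quasi-geodesic and non-reconvergence do not follow.

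The paper never projects intermediate products. It shows that the exact products $\overline{\mathbf f_n(b)}$ are separated by more than $K_0\max\{\ell(\mathbf x_0),\ell(\mathbf x_1)\}n$, then applies $\pi$ once. Proposition~\ref{prop:pi} puts the entire error $\epsilon(\mathbf f_n(b))$ on the right, with length at most $C_1n$. This linear error is exactly the ``crude bound'' you discarded. It is beaten by the large translation length per factor, together with the tail $\mathbf x_0^{\circ n}$, which keeps two words linearly separated in $n$ even when they differ only in their last letter.
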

As far as we know, this is the first dichotomy of this form for arbitrary infinite
subsets of hyperbolic groups under only a one-sided small-doubling hypothesis.  There is
now a substantial literature on infinite approximate subgroups, but much of it develops
in a different direction.  One important strand concerns approximate lattices and
discrete approximate subgroups in locally compact groups, beginning with work of
Björklund and Hartnick and continuing through structural results in Lie and linear
settings.  In particular, Machado developed a broad Lie-theoretic approach to infinite
approximate subgroups and approximate lattices: his work on good models, nilpotent Lie
groups, soluble Lie groups, higher-rank settings, and linear groups gives powerful
structural tools in those contexts
\cite{bjorklund2018approximate,machado2020good,machado2020nilpotent,
machado2022soluble,machado2023higherrank,machado2023linear}.  Another important strand
is the geometric theory of infinite approximate groups developed by Cordes, Hartnick,
Toni\'c and Machado, based on geometric finite generation and quasi-isometric
quasi-actions; in that framework one obtains, among other things, exponential growth
results for non-elementary hyperbolic approximate groups
\cite{cordes2024foundations}.  The present paper is complementary to these works: we
work inside a fixed finitely generated ambient group and use only the internal relation
\(A^2\subseteq AX\), with no geometric finite generation, no hull construction, no
uniform discreteness, and no a priori action attached to \(A\).

The proof of the hyperbolic dichotomy is based on a quantitative Schottky principle
adapted to approximate groups.  In a free-semigroup argument it is enough to produce
many distinct positive words.  Here that is not sufficient.  The relation
\(A^2\subseteq AX\) allows long products of elements of \(A\) to be projected back into
\(A\), but each projection carries an error from \(X\), and this error grows linearly
with the word length.  The main point is therefore quantitative separation: one must
construct words whose values in the Cayley graph remain far enough apart to survive this
linear projection error.  Hyperbolicity supplies such separation once one finds an
element with sufficiently large stable translation length.  The opposite case is ruled
out by a boundary argument showing that, unless stable translation lengths become
unbounded in \(A\) or \(A^2\), all of \(A\) fixes a common boundary point, forcing
\(\langle A\rangle\) to be elementary.

Our second theme is the existence of growth rates for approximate groups in more general
ambient groups.  We isolate a product-set condition which captures the required
almost-superadditivity.

\begin{definition}
A finitely generated group \(G\) has \emph{large product growth} if there exists a
polynomial \(P\) such that, for all finite subsets \(A\subseteq B_n\) and
\(B\subseteq B_k\),
\[
        |AB|\ge \frac{|A|\,|B|}{P(n+k)}.
\]
\end{definition}

This condition says that multiplication of finite sets cannot collapse too much, except
for a polynomial loss in the ambient radii.  

This condition is closely related to Sapir's Rapid Expansion property
\cite[Section~4]{sapir2015rapid}.  In the finitely generated setting, Rapid
Expansion asks for a polynomial \(P\) such that, whenever \(S\subseteq B_r\)
and \(X\subseteq G\) are finite,
\[
        |SX|\ge \frac{|S|\,|X|}{P(r)}.
\]
Thus Rapid Expansion is a one-sided version in which the bound is independent
of the radius of the second factor.  In particular, Rapid Expansion implies
large product growth.

Large product growth is additionally related in spirit to Schesler's ambiguity function, but it is
different in both formulation and use \cite{schesler2022relative}.  Schesler studies
relative growth of subgroups \(H\leq G\), where the counting sets are relative balls
\(H\cap B_n\).  Thus the objects being multiplied are not arbitrary finite subsets of the ambient group, but initial segments of a fixed subgroup in the ambient word
metric.  Moreover, since the ordinary multiplication map
\[
        (H\cap B_m)\times(H\cap B_n)\longrightarrow H,\qquad (h,k)\mapsto hk,
\]
may have very large fibers, Schesler replaces it by a concatenation map
\[
        \Phi(h,k)=h x_{h,k} k,
\]
where the connecting element \(x_{h,k}\) has uniformly bounded word length, and then bounds the fibers of \(\Phi\).  This gives weak supermultiplicativity of relative balls, because
\[
        \Phi((H\cap B_m)\times(H\cap B_n))\subseteq H\cap B_{m+n+O(1)}.
\]
Equivalently, after restricting the same map to arbitrary finite
\(U,V\subseteq H\), the ambiguity method gives a lower bound for a boundedly thickened product \(U B_{O(1)}V\), not for the ordinary product \(UV\).

By contrast, large product growth is an ambient product-set condition: it requires a
uniform lower bound for the ordinary product \(UV\) for all finite \(U\subseteq B_m\) and \(V\subseteq B_n\), with no assumption that \(U\) and \(V\)
lie in a common subgroup or have the form \(H\cap B_m\), \(H\cap B_n\).  This distinction is essential for approximate groups.  If \(A\) is an approximate group, then \(A\cap B_m\) and \(A\cap B_n\) are arbitrary finite pieces of \(A\), not balls
in a subgroup, and the relation \(A^2\subseteq AX\) controls ordinary products
\[
        (A\cap B_m)(A\cap B_n),
\]
up to multiplication by the fixed error set \(X\).  It gives no comparable control on products with an inserted connector,
\[
        (A\cap B_m)E(A\cap B_n),
\]
even when \(E\) is finite and bounded.  Thus both approaches produce almost supermultiplicative inequalities, but by different mechanisms: the ambiguity
method bypasses cancellation by inserting a bounded connector in the subgroup setting, whereas our product-growth condition requires a direct lower bound for
ordinary products \(UV\) of arbitrary finite subsets.

Our abstract result is the following.

\begin{theorem}[Existence of growth rates]
Let \(G\) be a finitely generated group with large product growth, and let
\(A\subseteq G\) be a non-empty approximate semigroup. Then the limit
\[
        \omega(A):=\lim_{n\to\infty}\frac{1}{n}\log |A\cap B_n|
\]
exists.
\end{theorem}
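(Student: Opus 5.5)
The plan is to prove an almost-superadditivity inequality for the function \(a_n := \log|A\cap B_n|\) and then run a Fekete-type argument that tolerates logarithmic error terms. Fix the finite set \(X\) with \(A^2\subseteq AX\), and set \(K=\max_{x\in X}|x|\). The basic step compares products of balls in \(A\). Put \(U=A\cap B_n\) and \(V=A\cap B_k\). Every element of \(UV\) lies in \(B_{n+k}\) and in \(AX\), so it has the form \(a'x\) with \(a'\in A\), \(x\in X\). Then \(|a'|\le n+k+K\), which gives
\[
        UV\subseteq (A\cap B_{n+k+K})X,
        \qquad\text{hence}\qquad
        |UV|\le |X|\,|A\cap B_{n+k+K}|.
\]
Large product growth supplies the matching lower bound \(|UV|\ge |U||V|/P(n+k)\). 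Combining the two, we obtain
\[
        |A\cap B_{n+k+K}| \ \ge\ \frac{|A\cap B_n|\,|A\cap B_k|}{|X|\,P(n+k)}.
\]
In terms of \(a_n\) this reads \(a_{n+k+K}\ge a_n+a_k-\log|X|-\log P(n+k)\). Shifting indices via \(b_n:=a_{n-K}\) absorbs the additive \(K\), and the result is approximately superadditive up to an \(O(\log(n+k))\) error.

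Before running Fekete, I will handle degenerate cases. Since \(A\) is non-empty, pick \(a_0\in A\). If \(A\) is finite, then \(a_n\) is eventually constant and the limit is \(0\). Otherwise \(a_n\ge 0\) for \(n\ge |a_0|\). Also \(a_n\le n\log|S|+O(1)\), so \(\limsup a_n/n<\infty\). The function \(a_n\) is monotone non-decreasing in \(n\), and I will use this monotonicity to fill gaps between the indices along which superadditivity is applied.

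The main step is a Fekete lemma with logarithmic error, in the style of de Bruijn–Erdős. Let \(L=\limsup a_n/n\). Given \(\varepsilon>0\), fix a large \(m\) with \(a_m/m\ge L-\varepsilon\). Iterating the inequality gives \(a_{j(m+K)}\ge j a_m - j(\log|X|+\log P(jm'))\), where \(m'=m+K\). The trouble is that this naive iteration loses \(j\log P(jm')\), which is of order \(j\log j\). That swamps the linear gain if we divide by \(jm'\) and let \(j\to\infty\) with \(m\) fixed.

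To fix this, I would use a doubling scheme: combine \(a_{2^i m'}\) with itself. The error at stage \(i\) is then \(O(\log(2^i m))=O(i+\log m)\), and the total error after \(i\) doublings is
\[
        \sum_{t<i}2^{i-1-t}\,O(t+\log m)=O\bigl(2^i\log m\bigr).
\]
Dividing by \(2^i m'\) gives an error of \(O(\log m/m)\), which tends to \(0\) as \(m\to\infty\). This yields \(\liminf_i a_{N_i}/N_i\ge L-\varepsilon-O(\log m/m)\) along the sequence \(N_i\approx 2^i m'\). To reach general \(n\), I will write \(n\) in a binary-type decomposition as a sum of blocks \(2^i m'\) plus a remainder smaller than \(m'\), and combine the blocks using the same inequality. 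The number of blocks is \(O(\log n)\), each combination costs \(O(\log n)\), and monotonicity of \(a_n\) handles the remainder. The total loss is \(O((\log n)^2)=o(n)\).

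The main obstacle is making this error bookkeeping rigorous: showing that the accumulated \(\log P\) losses and the \(K\)-shifts stay \(o(n)\) uniformly in the decomposition. With that in hand we get \(\liminf a_n/n\ge L-\varepsilon'\) for every \(\varepsilon'>0\), so \(\omega(A)\) exists, with the value \(0\) in the degenerate finite case.
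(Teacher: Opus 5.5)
Your proposal is correct and follows the paper's proof. The paper derives the same inequality $a_{n+k+K}\ge a_n+a_k-\log|X|-\log P(n+k)$ from $(A\cap B_n)(A\cap B_k)\subseteq (A\cap B_{n+k+K})X$ and the shrinkage bound, shifts indices by $K$, and then invokes the de Bruijn--Erd\H{o}s almost-superadditivity theorem through its shifted lemma. Your only departure is that you reprove the needed special case directly, by doubling and a binary block decomposition. Your bookkeeping does close: it gives $\liminf_n a_n/n\ge (a_m-O(\log m))/(m+K)$, and that approach works because the error is $O(\log n)$. Citing de Bruijn--Erd\H{o}s instead yields the more general criterion $\sum_n \log f(n)/n^2<\infty$.
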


In Section~\ref{sec:growth-rates} we prove this
theorem in a more general form: it is enough to have product
``shrinkage'' at most a non-decreasing function \(f\) such that
\[
        \sum_{n=1}^{\infty}\frac{\log f(n)}{n^2}<\infty.
\]

No symmetry is used in the proof of this theorem. The approximate-semigroup
hypothesis \(A^2\subseteq AX\) is enough: it turns products
\((A\cap B_n)(A\cap B_k)\) into points of \(A\), up to multiplication by the
fixed finite correction set \(X\). 
The large product growth condition prevents too many such products from collapsing. Together these two facts give the
almost-superadditivity needed to prove the existence of the limit.

Large product growth for a group $G$ has the following further consequence for its
approximate groups and semigroups: if an approximate semigroup $A$ in $G$ has
exponential growth rate zero, then \(|A\cap B_n|\) is polynomially bounded.  Thus within $G$, there are noapproximate semigroups with intermediate subexponential growth, even though such growth can occur for arbitrary finitely generated groups or arbitrary subsets.

One broad source of large product growth comes from the Rapid Decay (RD) property.  Recall
that a finitely generated group \(G\) has property \(\mathrm{RD}\), with respect to the
word metric, if convolution by functions supported in a ball of radius \(R\) has
operator norm bounded by a polynomial in \(R\) times the \(\ell^2\)-norm.  Equivalently,
one has polynomial convolution estimates for functions supported in balls.  Applying
these estimates to indicator functions, and then using the elementary
Cauchy--Schwarz energy inequality, gives a product-set bound of the form
\[
        |AB|\ge \frac{|A|\,|B|}{P(\min\{n,k\})}
\]
whenever \(A\subseteq B_n\) and \(B\subseteq B_k\).  Thus property
\(\mathrm{RD}\) implies large product growth; this implication was
previously isolated by Sapir as the Rapid Expansion property
\cite[Section~4]{sapir2015rapid}.
This gives a flexible analytic route to
the existence theorem above, and applies in particular to hyperbolic groups, free
groups, groups of polynomial growth, and many other groups arising in geometric group
theory
\cite{Jolissaint1990,Nica2010,chatterji2017introRD}.

The product-growth estimates used here are also product-set theorems in their
own right. They ask a basic non-commutative additive-combinatorial question:
given finite sets \(U\subseteq B_n\) and \(V\subseteq B_k\), how small can the
ordinary product set \(UV\) be? Equivalently, how much can the multiplication
map
\[
        U\times V \longrightarrow UV
\]
collapse because of cancellation?

Rapid Decay gives a robust polynomial-loss answer in great generality, but it
does not detect the optimal radius dependence in negatively curved groups. We
therefore prove direct product-set estimates in free and hyperbolic groups.
For free groups these estimates are essentially sharp in balls and completely
sharp on spheres.

\begin{theorem}[Product growth in free groups]\label{thm:free-product-growth}
Let \(A,B\subseteq F_r\) be finite non-empty sets with
\[
        A\subseteq B_n,\qquad B\subseteq B_k.
\]
Then
\[
        |AB|\ge \frac{|A|\,|B|}{4(n+k)+1}.
\]
Moreover, if \(A\subseteq S_n\) and \(B\subseteq S_k\), and if
\(m=\min\{n,k\}\), then
\[
        |AB|\ge \left(\frac23+\frac{1}{3\cdot 4^m}\right)|A|\,|B|.
\]
The constant in the sphere-sphere estimate is sharp for every \(m\).
\end{theorem}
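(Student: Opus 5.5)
The plan is to treat the two estimates separately. The ball estimate will come from an additive-energy bound, and the sharp sphere estimate from a direct combinatorial analysis of cancellation in the Cayley tree of \(F_r\). For the ball estimate, let \(E(A,B)=\#\{(a_1,b_1,a_2,b_2): a_1b_1=a_2b_2\}\). Cauchy--Schwarz gives
\[
        |AB|\ \ge\ \frac{|A|^2|B|^2}{E(A,B)}.
\]
So it suffices to prove \(E(A,B)\le (4(n+k)+1)\,|A|\,|B|\).

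To bound the energy, write each product in reduced form. For \(a\in A\) and \(b\in B\), let \(t(a,b)\) be the cancellation length, so \(a=a'w\), \(b=w^{-1}b'\) and \(ab=a'b'\) is reduced. The prefix length \(p=|a|-t\) then determines the splitting point of the reduced word \(ab\). A coincidence \(a_1b_1=a_2b_2=g\) is recorded by the two splitting points \(p_1,p_2\in[0,|g|]\) on the geodesic \([1,g]\). The key observation is that, once \(a_1\) and the signed offset \(p_2-p_1\) are fixed, the relation \(a_1^{-1}a_2=b_1b_2^{-1}\) together with the tree geometry determines \(a_2\) from \(b_1\) up to the choice of the cancelled segment. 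I expect the counting to be arranged so that each triple \((a_1,b_1,\text{offset})\) contributes at most a bounded number of \((a_2,b_2)\).

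The offset ranges over an interval of length \(O(n+k)\), which is where the factor \(4(n+k)+1\) should come from. Roughly, there is one choice for each signed shift of the splitting point, and a factor \(2\) for which of the two sides carries the excess. If the clean injectivity fails, the fallback is a Haagerup-type inequality in \(F_r\): for \(f\) supported on \(S_j\), \(\|h*f\|_2\le (j+1)\|h\|_2\|f\|_2\). I would apply it after grouping \(B\) by the length of the product rather than by \(|b|\), to avoid picking up a quadratic loss.

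For the sphere--sphere estimate with \(A\subseteq S_n\), \(B\subseteq S_k\) and \(m=\min\{n,k\}\), the plan is to partition \(A\times B\) by the cancellation length \(t\in\{0,\dots,m\}\). Products with different \(t\) have different lengths \(n+k-2t\), so they are automatically distinct. For a fixed \(t\), a product \(g=a'b'\) determines \(a'\) and \(b'\). The fiber over \(g\) is therefore indexed by the words \(w\) of length \(t\) with \(a'w\in A\) and \(w^{-1}b'\in B\), subject to reducedness at both junctions.

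I will set up a weighted count on the tree. Each collapse at level \(t\) forces the two sets to share a branching pattern of depth \(t\), and the branching number \(2r-1\ge 3\) limits how much of \(|A||B|\) can be lost at each level. Summing the losses geometrically should give total collapse at most \(\tfrac13(1-4^{-m})|A||B|\), which yields the constant \(\tfrac23+\tfrac{1}{3\cdot4^m}\). Sharpness should come from an explicit construction in \(F_2\). Take \(A\) to be all reduced words of length \(n\) ending in a subtree and \(B\) their matching inverses, arranged so that at each depth exactly the extremal fraction of pairs collides. Then compute \(|AB|\) level by level.

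The main obstacle is the exact extremal analysis in the sphere case. The correct weighting of collisions across levels must produce precisely the geometric series with ratio \(1/4\) and not merely some constant. I would first try to prove an inequality for a single level, \(|A_tB_t|\ge\) (pairs at level \(t\)) \(-\tfrac14\)(pairs passing to level \(t+1\)), by induction on depth. I would prove it by reducing to a two-letter local configuration at a vertex of the tree, where the bound becomes a finite optimization.
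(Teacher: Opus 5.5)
\textbf{Ball estimate.} Your route needs the energy bound $E(A,B)\le(4(n+k)+1)|A||B|$. That is strictly stronger than the theorem, and the step you propose for it is false. Once $(a_1,b_1)$ and the offset are fixed, the partners $(a_2,b_2)$ are indexed by the admissible cancelled segments (spurs), and there can be arbitrarily many of them.

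Take $A=\{a,b\}^t$ (positive words) and $B=A^{-1}$. In the fiber of any $g=uv^{-1}$, every pair splits $g$ at the same point, where the positive part meets the negative part. So every coincidence has offset $0$. Yet the triple $(w,w^{-1},0)$ has $2^t$ partners, and $E(A,B)=(1+t/2)|A||B|$. The linear factor therefore does not come from one unit per offset, and nothing in your sketch controls how offsets and spur depths interact.

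The Haagerup fallback does not close the gap either:
\begin{itemize}
\item In operator-norm form it gives $E\le(j+1)^2|A||B|$ already when one factor lies on $S_j$, and an $n^3$-type loss after summing the layers of a ball.
\item The truncated form $\|(f*h)1_{S_\ell}\|_2\le\|f\|_2\|h\|_2$ gives linear energy only when both sets lie on single spheres. For ball-supported sets, many layer pairs $(A_i,B_j)$ feed the same output sphere, and you have no orthogonality between them. ``Grouping $B$ by product length'' is not a defined step.
\item Even a linear energy bound with an unspecified constant would not give the stated $4(n+k)+1$.
\end{itemize}

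The paper avoids energy entirely. It proves property $Q(1/4)$: take a deepest half-heavy prefix cell, and use marker injectivity, which says that pairs whose cancellation does not exceed a prefix common to the whole cell multiply injectively. It then layer-peels $U\times V$ into at most $|\Lambda^+(U)|+|\Lambda^+(V)|$ rectangles, each with product density at least $1/4$. The linear loss is a pigeonhole over layers, not a fiber bound.

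\textbf{Sphere estimate.} This part is a heuristic without the working mechanism. Read literally, your single-level inequality fails on the extremal pair $U=\{a^{-1},b^{-1}\}^n$, $V=\{a,b\}^k$. For $1\le t<k$:
\begin{itemize}
\item $2^{n+k-t-1}$ pairs have cancellation exactly $t$;
\item these pairs give only $2^{n+k-2t-1}$ distinct products;
\item another $2^{n+k-t-1}$ pairs have cancellation at least $t+1$.
\end{itemize}
So the collapse at depth $t$ is by a factor $2^t$, and no accounting of one level against the next can work. The branching number $2r-1$ is also a red herring: $d_m$ does not depend on $r$, and the extremal configuration lives in a binary subtree.

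What works is a recursion on the whole configuration.
\begin{enumerate}
\item Split $U$ and $V$ by last and first letter. The cross classes $U(a)\times V(b)$ with $a\ne b$ inject into $S_{n+k}$.
\item The diagonal classes can overlap one another, so keep only one letter $c$, chosen to maximize $\alpha_c+\beta_c$. Strip $c$ and apply induction to get $|U(c)V(c)|\ge d_{k-1}|U(c)||V(c)|$.
\item Bound the loss with $\lambda_a=\tfrac12(\alpha_a+\beta_a)$ and the inequality $\sum_{a\ne c}\lambda_a^2+c_0\lambda_c^2\le (1+c_0)/4$ for $c_0=1-d_{k-1}\in[0,1/3]$. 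This gives $d_k=\tfrac12+\tfrac{d_{k-1}}{4}$.
\end{enumerate}
For sharpness, ``matching inverses'' only makes sense when $n=k$. The right example for all $n\ge k$ is the pair above, with $|UV|$ counted through the reduced words $r^{-1}q$.
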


The first inequality is optimal in its radius dependence: intervals inside a
cyclic subgroup show that no uniform sublinear loss in \(n+k\) is possible.
Thus the ball--ball estimate gives the correct order of magnitude for the
worst possible collapse of \(U\times V\to UV\).

The sphere--sphere estimate is stronger. In that case there is no growing
radius loss at all: the optimal shrinkage is a constant depending only on
\(m=\min\{n,k\}\), and the theorem determines that constant exactly for every
\(m\). The lower bound is therefore not merely sharp up to constants; it
completely solves the sphere--sphere product-growth problem in free groups.

The same linear-order ball-ball estimate holds, with a group-dependent constant, in
all hyperbolic groups.

\begin{theorem}[Product growth in hyperbolic groups]
\label{thm:hyperbolic-product-growth}
Let \(G\) be a finitely generated hyperbolic group, with a fixed finite generating set $S$.
Then there exists a constant \(c_{G,S}>0\) such that, for all finite non-empty sets
\(U\subseteq B_n\) and \(V\subseteq B_k\),
\[
        |UV|\ge c_{G,S}\frac{|U|\,|V|}{n+k+1}.
\]
In particular, every hyperbolic group has large product growth.
\end{theorem}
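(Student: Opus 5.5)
The plan is to bound the multiplicity of the multiplication map \(U\times V\to UV\) by a fibre-counting argument. For \(g\in UV\), set
\[
r(g)=\#\{(u,v)\in U\times V: uv=g\}.
\]
Then \(|U||V|=\sum_{g\in UV} r(g)\), so it suffices to find a set of pairs in \(U\times V\) of size at least a constant fraction of \(|U||V|\) on which the fibres have size \(O(n+k+1)\). Bounding every fibre directly is hopeless: in the free group, \(u=wa\), \(v=a^{-1}w'\) with \(a\) ranging over a sphere gives fibres of exponential size. So, as in the free-group ball--ball estimate (Theorem~\ref{thm:free-product-growth}), the point is to organise pairs by the amount of cancellation and then use a pigeonhole over the cancellation length. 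This pigeonhole over \(O(n+k)\) values is exactly where the linear loss comes from.

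First, fix \(\delta\) with Cayley graph \(\delta\)-hyperbolic. For \(u\in U\), \(v\in V\), the Gromov product \(t=(u^{-1}\cdot v)_1\in\{0,\dots,\min(n,k)\}\) up to rounding measures cancellation: \(|uv|\approx |u|+|v|-2t\). Fix geodesics \(\gamma_u\) from \(1\) to \(u\) and \(\eta_v\) from \(1\) to \(v\). Write
\[
u=u'a,\qquad v=a'v',
\]
where \(u'\) is the point of \(\gamma_u\) at distance \(|u|-t\) from \(1\), and \(v'\) is defined similarly on the other side, so that \(a\) and \(a'^{-1}\) are \(O(\delta)\)-close. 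Partition \(U\times V\) into classes by the triple \((|u|,|v|,t)\), or more economically by \(t\) and the lengths \(|u|-t\), \(|v|-t\). There are at most \((n+1)(k+1)\min(n,k)\) such classes, which is too many. So I would instead use a weighted pigeonhole: build a map
\[
\Psi:(u,v)\mapsto (uv,\; t,\; \text{bounded data}),
\]
and show that \(\Psi\) is at most \(C\)-to-one after restricting to a suitable large subset.

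The key injectivity lemma I would aim for is the following. Given \(g=uv\) and the numbers \(\ell=|u|-t\) and \(m=|v|-t\), the geodesic from \(1\) to \(g\) passes within \(O(\delta)\) of \(u'\) at distance \(\ell\). Hence \(u'\) is determined up to \(|B_{O(\delta)}|\) choices, and \(v'\) is likewise determined. The remaining freedom is the cancelled middle piece \(a\), which can be large. To kill it I would use a pigeonhole in the spirit of the free-group proof: for each fixed \(u'\) and each \(t\), the number of ways to extend is the same on both sides. More precisely, compare \(|UV|\) with
\[
\max_t \sum_{\text{cores}} |U_{u',t}|\,|V_{v',t}|,
\]
where \(U_{u',t}\) is the set of \(u\in U\) with core \(u'\) and tail length \(t\). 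Products with distinct cores \((u',v')\) give distinct elements up to bounded multiplicity. Within a fixed core pair, the products \(u'a\,a'v'\) with \(a a'\) close to \(1\) collapse, but products \(u'a\,\tilde a'v'\) with \(a\neq\tilde a'^{-1}\) cancel less and fall into a different class of \(t\). I would then show that the pairs \((u,v)\) with exact cancellation \(t\) (meaning \(aa'\in B_{O(\delta)}\)) are at most
\[
\sum_{\text{cores}} \min\bigl(|U_{u',t}|,|V_{v',t}|\bigr)\cdot O(1)\cdot\max(\cdot)\le O(1)\,|UV|
\]
for each fixed \(t\). Summing over the \(n+k+1\) values of \(t\), and over the \(O(1)\) values of \(\delta\)-rounding, then gives \(|U||V|\le C(n+k+1)|UV|\).

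The main obstacle is the step above: showing that for fixed \(t\), the number of pairs with cancellation exactly \(t\) is \(O(|UV|)\). In the free group this is clean. For fixed core pair, \(a\) determines \(a'\), so the pairs inject into \(\{(u',a,v')\}\) with image elements \(u'v'\). These correspond to distinct products only up to the core data, and one bounds via \(\sum_{u',v'}\min\) to get counts of distinct \(u'v'\). In hyperbolic groups I would first try to prove it via the bounded ambiguity of the core decomposition: \((g,t)\) determines \((u',v')\) up to \(O(1)\). Then the map \((u,v)\mapsto (uv,\; a)\) is injective, and the count of pairs in class \(t\) is at most
\[
\sum_{g}\#\{a\}\le |UV|\cdot\max_{u'}|U_{u',t}|.
\]
This is still too big, so the real trick must be to choose, for each \(t\), either \(U\)-side or \(V\)-side counting, e.g.\ via Cauchy--Schwarz on \(\sum_{u',v'}|U_{u',t}||V_{v',t}|\) against \(|U'_t||V'_t|\). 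I expect some version of this averaging to be where the constant \(c_{G,S}\) and the real work lie.
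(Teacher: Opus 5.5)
Your reduction fails. The central claim is that for each fixed cancellation depth $t$ the pairs with $(u^{-1}\mid v)=t$ number $O(|UV|)$. Equivalently, you want $\Psi(u,v)=(uv,t,\text{bounded data})$ to be boundedly-to-one on a positive proportion of $U\times V$. This is false. In $F_r$ take $U=V=\{1,a,\dots,a^N\}$ with $a$ a free generator. Every pair has $(u^{-1}\mid v)=0$, there are $(N+1)^2$ pairs, and $|UV|=2N+1$. Since all pairs share the same $t$, any map of the form $(uv,t,\text{bounded data})$ has image of size $O(N)$, so it cannot be boundedly-to-one on $c(N+1)^2$ pairs.

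In this example a linear loss is genuinely needed, and it comes entirely from the spread of word lengths of the factors, not from cancellation. Your own injectivity step shows this: recovering the core $u'$ from $g=uv$ requires $\ell=|u|-t$, which is not bounded data, yet it disappears from your final count. Zero-cancellation pairs have bounded fibres as soon as one factor lies on a single sphere, but not in general. Pigeonholing over both $|u|$ and $t$ would cost too much, as you yourself note. Finally, the step you flag as the main obstacle, controlling the cancelled middle piece, is left open; the Cauchy--Schwarz suggestion is not yet an argument.

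Two ideas are missing. First, you must pigeonhole over \emph{layers} rather than over $t$, while avoiding the $(n+1)(k+1)$ count. Second, you must pay only a constant for cancellation.

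The paper gets the first by layer peeling. Suppose that for all $U,V$ either some layer satisfies $|U_iV|\ge\zeta|U_i||V|$ or some layer satisfies $|UV_j|\ge\zeta|U||V_j|$ (property $Q(\zeta)$). Then removing that layer and iterating partitions $U\times V$ into at most $|\Lambda^+(U)|+|\Lambda^+(V)|\le n+k$ good rectangles. Each rectangle has only \emph{one} side on a sphere, and the largest one gives the theorem.

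For the second, to prove $Q(\eta)$ take a heavy cell of maximal depth $t$. The candidates are the prefix cells $V_j^{+}[p]=\{v\in V_j: v[t]=p\}$ and the inverse-suffix cells $U_i^{-}[p]$, where heavy means containing at least a $1/(2\,|B_{4\delta+\varepsilon}|)$ fraction of the layer. Say the cell is $W=V_j^{+}[p]$; the case of a $U$-cell is symmetric.
\begin{itemize}
\item If $(u^{-1}\mid v)>t$, then $u^{-1}[t+1]$ lies within $4\delta+\varepsilon$ of $v[t+1]$. Such $u$ fall into at most $|B_{4\delta+\varepsilon}|$ cells of depth $t+1$, none of which is heavy.
\item Hence for each $v\in W$, fewer than half of each layer $U_i$ cancels more than $t$ with $v$.
\item On the remaining pairs the common marker $p$ locates the junction. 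After translating by $p$, the cancellation is at most $\delta$ and the right factor lies on $S_{j-t}$, so multiplication has fibres of size at most $|B_{4\delta}|$.
\end{itemize}
This yields $|UV_j|\ge \eta|U||V_j|$ with $\eta=1/(4\,|B_{4\delta+\varepsilon}|\,|B_{4\delta}|)$. Without such a common marker, pairs with cancellation at most $t$ can have huge fibres even when one factor is a single sphere; take $U=S_t^{-1}$, $V=S_t$ and the product $1$.
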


Since free groups are hyperbolic, Theorem~\ref{thm:hyperbolic-product-growth}
implies Theorem~\ref{thm:free-product-growth}, up to the value of the constant.  We nevertheless keep the free-group theorem as a separate result.
First, the free-group estimates are sharper: in balls we obtain an explicit absolute linear bound, and on spheres we determine the exact best constant.  Second, the proof in free groups is the clearest form of the argument.  The hyperbolic proof follows the
same strategy, but is technically more involved.

Nica recently proved a full-sphere expansion theorem for hyperbolic groups, 
This theorem shows large product growth when the left factor is an entire  sphere, and
the right factor is an arbitrary finite subset of the group:

\begin{theorem}[Expansion of full spheres in hyperbolic groups]\cite[Theorem~2.5]{Nica2024}
\label{thm:intro-hyperbolic-sphere-expansion}
Let \(G\) be a non-elementary hyperbolic group, with a fixed finite generating
set \(S\), and write
\[
        S_n=\{g\in G:|g|=n\}.
\]
Then there exists a constant \(c'_{G,S}>0\) such that, for every \(n\ge 0\) and
every finite non-empty \(X\subseteq G\),
\[
        |S_nX|\ge c'_{G,S}|S_n|\,|X|.
\]
\end{theorem}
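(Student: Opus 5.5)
Since this statement is quoted from \cite[Theorem~2.5]{Nica2024}, the paper may simply invoke it. The plan below sketches how I would prove it independently, by a fiber-counting argument using the thinness of geodesic triangles. The goal is to bound the fibers of the multiplication map \(S_n\times X\to S_nX\). Equivalently, for each \(z\in S_nX\), bound the number of pairs \((g,x)\) with \(g\in S_n\), \(x\in X\), \(gx=z\), by a constant times \(|S_nX|/|X|\). Doing this pointwise is hopeless, so I would instead count energy-style. Write \(r(z)=|\{(g,x): gx=z\}|\). Then \(\sum_z r(z)=|S_n||X|\), and by Cauchy--Schwarz
\[
|S_nX|\ge \frac{(|S_n||X|)^2}{\sum_z r(z)^2}.
\]
It therefore suffices to show \(\sum_z r(z)^2\le C|S_n|\,|X|\). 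This sum counts quadruples \(g x=g'x'\), that is, pairs \(g,g'\in S_n\) together with \(x\in X\) such that \(x'=g'^{-1}gx\in X\). Hence \(\sum_z r(z)^2=\sum_{h}\nu(h)\,|X\cap h X|\), where \(\nu(h)=|\{(g,g')\in S_n^2: g'^{-1}g=h\}|\). Using \(|X\cap hX|\le |X|\) reduces the claim to \(\sum_h \nu(h)\le C|S_n|\), which is false (it equals \(|S_n|^2\)). So the weight \(|X\cap hX|\) must actually be exploited, and I would switch to a direct geometric argument instead.

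Direct approach: fix \(z\) and consider the fiber \(F_z=\{g\in S_n: g^{-1}z\in X\}\). By \(\delta\)-thinness, for \(g\in S_n\) the geodesic from \(1\) to \(z=gx\) passes near the point of \([1,g]\) at distance \(t=(z\mid g)_1\) from \(1\), up to \(O(\delta)\). Group \(g\in S_n\) according to the integer \(t\in[0,n]\) and the prefix \(p\) of \(g\) of length \(t\), taken up to \(O(\delta)\)-ambiguity. Then \(g=p\,s\) with \(|s|=n-t\) and \(z\approx p\,w\). For fixed \(z\), the level \(t\) and the nearby point on \([1,z]\) determine \(p\) up to \(O(1)\) choices. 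Then \(x=g^{-1}z=s^{-1}p^{-1}z\), and different \(s\) give different \(x\). Thus, for a fixed level \(t\), the map \(g\mapsto x\) is injective up to \(O(1)\) once \(z\) and \(t\) are fixed. This alone gives a loss of a factor \(n+1\), which yields only the weaker bound \(|S_nX|\gtrsim |S_n||X|/(n+1)\), in the style of Theorem~\ref{thm:hyperbolic-product-growth}.

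To remove the factor \(n\), I would use the exponential growth of spheres in a non-elementary group. The key input is a uniform estimate \(|S_m|\asymp \lambda^m\) (Coornaert). Combined with the fact that the number of \(g\in S_n\) sharing a given prefix of length \(t\) is \(\lesssim \lambda^{n-t}\approx |S_n|/|S_t|\), this gives a geometric decay in \(t\). For each \(x\in X\), the elements \(g\in S_n\) whose Gromov product \((g^{-1}\mid x)_1\) is about \(n-t\) number at most \(C\lambda^{n}\cdot\lambda^{-(n-t)}\cdot(\text{shadow proportion})\). Summing over \(t\) with this geometric weighting, the collision count \(\sum_z r(z)^2\) should be bounded by \(C|S_n||X|\sum_{j}\lambda^{-j}\), which is \(O(|S_n||X|)\). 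I expect the main obstacle to be exactly this step: proving the uniform shadow estimate, namely that the proportion of \(S_n\) whose inverse fellow-travels a fixed geodesic direction for length \(j\) is \(O(\lambda^{-j})\), uniformly over directions. This is where non-elementarity and Coornaert's purely exponential growth are essential. I would try to derive it first from quasi-conformality of the Patterson--Sullivan measure, i.e.\ the shadow lemma.
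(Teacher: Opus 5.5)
\textbf{The gap.} The inequality you reduce everything to, $\sum_z r(z)^2\le C|S_n|\,|X|$, is false. So Cauchy--Schwarz applied to the full multiplicative energy cannot give a radius-independent constant.

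Take $G=F_2$ with its standard basis and $X=S_n$. Suppose $(g,x)\in S_n\times S_n$ has cancellation length $j$ with $1\le j\le n-1$. Then $z=gx=ab$ is reduced of length $2n-2j$, where $a,b$ are the prefix and suffix of $z$ of length $n-j$. The whole fiber over $z$ is $\{(aw,w^{-1}b)\}$, with $w$ reduced of length $j$ and first letter avoiding two letters, so $r(z)=2\cdot 3^{j-1}$. There are $|S_n|\cdot 2\cdot 3^{n-j-1}$ pairs at depth $j$. Since $\sum_z r(z)^2=\sum_{(g,x)}r(gx)$, each depth $1\le j\le n-1$ contributes exactly $|S_n|^2/3$. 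Hence the energy is at least $\frac{n-1}{3}|S_n|\,|X|$, and this group is non-elementary, so it lies within the theorem's scope.

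Your geometric-decay heuristic correctly uses that depth-$j$ pairs are only a $\lambda^{-j}$ fraction. But it ignores that the energy weights each pair by its fiber size, which at depth $j$ is of order $\lambda^{j}$. The two effects cancel, every depth contributes a constant, and the factor $n$ returns.

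The same issue appears in your direct approach. Fixing $z$ and the level $t$ determines $p$ up to $O(1)$ choices, but leaves $s\in S_{n-t}$ free. So the fiber at cancellation depth $n-t$ can have about $|S_{n-t}|$ elements; it is uniformly bounded only when the cancellation depth is bounded.

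\textbf{The missing idea.} Do not try to control the deep-cancellation pairs; discard them. Use your shadow estimate at a single fixed depth $M$. If $(s^{-1}\mid x)>M$, then $s^{-1}[M]$ lies within $4\delta$ of $x[M]$, so there are at most $|B_{4\delta}|\,|S_{n-M}|$ such $s\in S_n$. Coornaert's two-sided bounds make this at most $\frac12|S_n|$ once $M$ is large, so no Patterson--Sullivan theory is needed. Hence $P=\{(s,x)\in S_n\times X:(s^{-1}\mid x)\le M\}$ has at least $\frac12|S_n|\,|X|$ elements.

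On $P$, multiplication has uniformly bounded fibers. If $sx=s'x'=w$ with both pairs in $P$, then $(s\mid w)=n-(s^{-1}\mid x)\ge n-M$, and likewise for $s'$. So $(s\mid s')\ge n-M-\delta$ and $d(s,s')\le 2M+2\delta$. Therefore $|S_nX|\ge |P|/|B_{2M+2\delta}|\ge |S_n|\,|X|/(2|B_{2M+2\delta}|)$, which is the paper's proof.

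If you want to keep Cauchy--Schwarz, apply it only to the representation function of $P$. Its energy is at most $|B_{2M+2\delta}|\,|P|$, which gives the same bound.
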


Nica's proof
uses analytic estimates for spherical averaging operators, and he asks in
\cite[Remark~8.4]{Nica2024} whether such expansion bounds can be proved
directly, without functional-analytic detours.  Our proof uses the same
bounded-cancellation and bounded-fiber mechanism as
Theorem~\ref{thm:hyperbolic-product-growth}.  The additional input is that full
spheres have small shadows: after fixing a sufficiently large cancellation
threshold, a positive proportion of \(S_n\) has uniformly bounded cancellation
with each fixed element of \(X\).

These direct estimates are substantially sharper than what follows from the general Rapid Decay argument. 
The full-sphere expansion theorem above is another instance of this geometric
approach.
Property RD gives a flexible polynomial-loss
lower bound for product sets, uniform across all groups with RD. In
hyperbolic groups, the geometry of cancellation gives more: for subsets of
balls one obtains the optimal linear-order loss, while for spherical layers
one obtains a radius-independent lower bound. Thus the free and hyperbolic
product estimates are not simply applications of analytic RD machinery; they
are sharper additive-combinatorial product theorems for negatively curved
groups.

The Rapid Decay implication used above is due to Sapir.  In the final part of the paper we recall the implication in our notation and record the following explicit large-product-growth consequence.
\begin{theorem}[Rapid Decay implies large product growth]
\label{thrm:Rapid-Decay}
Let \(G\) be a finitely generated group with property \(\mathrm{RD}^s\), with constant
\(C>0\).  If \(A\subseteq B_n\) and \(B\subseteq B_k\) are finite non-empty subsets of
\(G\), then
\[
        |AB|\ge \frac{|A|\,|B|}{C^2(1+
        \min\{n,k\})^{2s}}.
\]
In particular, \(G\) has large product growth.
\end{theorem}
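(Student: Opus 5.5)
We prove Theorem~\ref{thrm:Rapid-Decay} by applying the RD inequality to indicator functions and then running the standard energy argument. We use the convention that \(G\) has \(\mathrm{RD}^s\) with constant \(C\) when
\[
\|f*g\|_2\le C(1+R)^s\|f\|_2\|g\|_2
\]
for every \(f\) supported in \(B_R\) and every \(g\in\ell^2(G)\). We also need the same estimate when the ball-supported function sits on the right. This follows from the first form by taking adjoints, with \(\tilde f(x)=\overline{f(x^{-1})}\), since \(B_R\) is symmetric: writing \(f*g=(\tilde g*\tilde f)^{\sim}\) and noting that \(\tilde\cdot\) preserves \(\ell^2\)-norms, we may put whichever factor has the smaller support radius in the role of \(f\). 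This is where \(\min\{n,k\}\) enters.

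Set \(f=\mathbf 1_A\) and \(g=\mathbf 1_B\), and let \(r(x)=(f*g)(x)=\#\{(a,b)\in A\times B: ab=x\}\) be the representation function. Then \(r\) is supported on \(AB\), and \(\sum_x r(x)=|A||B|\). By Cauchy--Schwarz,
\[
(|A||B|)^2=\Bigl(\sum_{x\in AB}r(x)\Bigr)^2\le |AB|\,\|r\|_2^2 .
\]
Applying RD with the factor of smaller radius \(m=\min\{n,k\}\) in the ball-supported slot gives
\[
\|r\|_2^2\le C^2(1+m)^{2s}\|\mathbf 1_A\|_2^2\|\mathbf 1_B\|_2^2=C^2(1+m)^{2s}|A||B|.
\]
Combining the two inequalities yields \(|AB|\ge |A||B|/\bigl(C^2(1+m)^{2s}\bigr)\), which is the claimed bound.

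For the final assertion, take \(P(t)=C^2(1+t)^{2s}\). Since \(\min\{n,k\}\le n+k\) and \(P\) is increasing, we get \(|AB|\ge |A||B|/P(n+k)\), so \(G\) has large product growth.

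The only step requiring any care is the reduction to \(\min\{n,k\}\), that is, checking that the RD norm bound holds with the ball-supported function on either side. I will verify this through the involution \(f\mapsto\tilde f\) together with the symmetry of word-metric balls, as above. The rest of the argument is a routine computation.
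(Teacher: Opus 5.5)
Your proof is correct and is essentially the paper's argument. The paper cites Sapir's Rapid Expansion estimate $|YX|\ge |Y||X|/(C^2(1+r)^{2s})$ for $Y\subseteq B_r$, which is exactly the RD-plus-Cauchy--Schwarz bound you derive with $\mathbf 1_A$ in the ball-supported slot, and gets $\min\{n,k\}$ from $|UV|=|V^{-1}U^{-1}|$, which is your identity $\mathbf 1_A*\mathbf 1_B=(\mathbf 1_{B^{-1}}*\mathbf 1_{A^{-1}})^{\sim}$ at the level of sets; the differences are only that you prove the cited estimate inline (your ball-supported form of RD follows from the paper's Sobolev-norm definition because $\|f\|_{\ell^2_s}\le(1+R)^s\|f\|_2$ for $f$ supported in $B_R$), and that, as in the paper, $(1+t)^{2s}$ should be replaced by $(1+t)^{\lceil 2s\rceil}$ when $2s$ is not an integer, so that $P$ is a genuine polynomial.
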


Consequently, every approximate group in a finitely generated group with property
\(\mathrm{RD}\) has a well-defined exponential growth rate.

Taken together, the results give an asymptotic theory of infinite approximate
semigroups in negatively curved and product-expanding groups. 
In hyperbolic groups, one-sided small doubling alone forces a dichotomy between virtual cyclicity and exponential growth. In general finitely generated groups, the existence of growth rates follows from quantitative lower bounds for ordinary
product sets. The free and hyperbolic estimates show that these product-set
bounds can be sharp additive-combinatorial statements.

\section{Preliminaries for the hyperbolic dichotomy}
\label{sec:prelims}
In this section we fix notation and recall the elementary facts from hyperbolic geometry and approximate semigroups that will be used in the proof of Theorem~\ref{thm:hyp dichotomy}.

\subsection{Notation for products}
\label{subsec:general}
Throughout, \(G\) denotes a finitely generated group with identity \(1\), and
\(S\) denotes a fixed finite symmetric generating set.  The word metric is always
computed with respect to this generating set.

For subsets \(U,V\subseteq G\), we write
\[
        UV=\{uv:u\in U,\ v\in V\}.
\]
For \(j\geq 0\), let \(U^{\times j}\) denote the set of \(j\)-tuples with entries in
\(U\), and set
\[
        U^*=\bigcup_{j\geq 0}U^{\times j}.
\]
The unique element of \(U^{\times 0}\) is the empty tuple.

We use boldface letters for tuples.  If
\[
        \mathbf u=(u_1,\ldots,u_j)\in G^*,
\]
we write
\[
        \ell(\mathbf u)=j
\]
for its length as a tuple, and
\[
        \overline{\mathbf u}=u_1\cdots u_j\in G
\]
for the group element represented by the tuple.  For the empty tuple we set
\(\overline{\mathbf u}=1\).  If
\(\mathbf u=(u_1,\ldots,u_j)\) and
\(\mathbf v=(v_1,\ldots,v_k)\), their concatenation is
\[
        \mathbf u\circ \mathbf v=(u_1,\ldots,u_j,v_1,\ldots,v_k).
\]
For \(m\geq 1\), let \(\mathbf u^{\circ m}\) denote the concatenation of \(m\)
copies of \(\mathbf u\).  We also define
\[
        \mathbf u^{-1}=(u_j^{-1},\ldots,u_1^{-1}).
\]
Then
\[
        \overline{\mathbf u^{-1}}=\overline{\mathbf u}^{-1},
        \qquad
        \overline{\mathbf u\circ\mathbf v}
        =
        \overline{\mathbf u}\,\overline{\mathbf v}.
\]
When no confusion can arise, we write
\[
        |\mathbf u|:=|\overline{\mathbf u}|,
        \qquad
        d(\mathbf u,\mathbf v):=d(\overline{\mathbf u},\overline{\mathbf v}).
\]
Thus
\[
        d(\mathbf u,\mathbf v)
        =
        |\overline{\mathbf u}^{-1}\overline{\mathbf v}|
        =
        |\mathbf u^{-1}\circ\mathbf v|.
\]
This convention will only be used for tuples.

\subsection{The Cayley graph metric and Gromov products}
\label{subsec:Cayley}
Let \(\Gamma=\Gamma(G,S)\) be the Cayley graph of \(G\) with respect to \(S\), and
let \(d\) be its graph metric.  We write
\[
        |g|=d(1,g),
        \qquad
        B_r=\{g\in G: |g|\leq r\}.
\]
For \(x,y,t\in G\), the Gromov product of \(x\) and \(y\) based at \(t\) is
\[
        (x\mid y)_t
        =
        \frac12\bigl(d(x,t)+d(y,t)-d(x,y)\bigr).
\]
When \(t=1\), we omit the subscript and write \((x\mid y)\).

We record some elementary identities and inequalties.

\begin{prop}
\label{prop:easy metric}
For all \(x,y,t\in G\), the following hold.
\begin{enumerate}[label=\textup{(\arabic*)}]
    \item \label{easy metric 5} \(d(x,y)=|x^{-1}y|\).
    \item \label{easy metric 1}
    \[
        (x\mid y)_t
        =
        \frac12\bigl(|t^{-1}x|+|t^{-1}y|-|x^{-1}y|\bigr).
    \]
    \item \label{easy metric 2}
    \[
        |xy|=|x|+|y|-2(x^{-1}\mid y).
    \]
    \item \label{easy metric 3}
    \[
        (x\mid y)_t\leq \min\{d(x,t),d(y,t)\}.
    \]
    \item \label{easy metric 4}
    \[
        (xy\mid x)=|x|-(x^{-1}\mid y)\geq |x|-|y|.
    \]
\end{enumerate}
\end{prop}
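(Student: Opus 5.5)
The plan is to verify the five items in order, using only the left-invariance of the word metric: the generating set acts on the Cayley graph by left multiplication, so $d(gx,gy)=d(x,y)$ for all $g,x,y\in G$. No hyperbolicity is involved; each item reduces to a one- or two-line manipulation of the definition of the Gromov product together with the triangle inequality.

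For \ref{easy metric 5}, left-translate by $x^{-1}$ to get $d(x,y)=d(1,x^{-1}y)=|x^{-1}y|$. Item \ref{easy metric 1} then follows by substituting \ref{easy metric 5} into each of the three distances in the definition of $(x\mid y)_t$, since $d(x,t)=|t^{-1}x|$, $d(y,t)=|t^{-1}y|$ and $d(x,y)=|x^{-1}y|$. For \ref{easy metric 2}, apply \ref{easy metric 1} with $t=1$ to the pair $x^{-1},y$. This gives
\[
2(x^{-1}\mid y)=|x^{-1}|+|y|-|xy|,
\]
and since $S$ is symmetric we have $|x^{-1}|=|x|$, so rearranging gives the identity.

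For \ref{easy metric 3}, the triangle inequality $d(y,t)\le d(x,t)+d(x,y)$ gives $d(y,t)-d(x,y)\le d(x,t)$. Hence $(x\mid y)_t\le \tfrac12\bigl(d(x,t)+d(x,t)\bigr)=d(x,t)$, and the bound by $d(y,t)$ follows by symmetry in $x$ and $y$.

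For \ref{easy metric 4}, use \ref{easy metric 1} with $t=1$:
\[
(xy\mid x)=\tfrac12\bigl(|xy|+|x|-|(xy)^{-1}x|\bigr)=\tfrac12\bigl(|xy|+|x|-|y|\bigr),
\]
because $(xy)^{-1}x=y^{-1}$ and $|y^{-1}|=|y|$. Substituting $|xy|=|x|+|y|-2(x^{-1}\mid y)$ from \ref{easy metric 2} gives $(xy\mid x)=|x|-(x^{-1}\mid y)$. The inequality then follows from \ref{easy metric 3} with $t=1$, which gives $(x^{-1}\mid y)\le |y|$. There is no real obstacle here; the only points needing care are using the symmetry of $S$ to get $|g^{-1}|=|g|$, and writing \ref{easy metric 1} with $t^{-1}$ on the left, to match the right-multiplication convention for the Cayley graph.
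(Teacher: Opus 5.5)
Your proposal is correct and follows the same route as the paper, which simply states that all five items follow directly from the definition of the Gromov product and left-invariance of the word metric. Your item-by-item verification spells out exactly this. It uses $|g^{-1}|=|g|$ from the symmetry of $S$, the triangle inequality for item (4), and items (2)–(4) with $t=1$ to obtain item (5).
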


\begin{proof}
All statements follow directly from the definition of the Gromov product and from
left-invariance of the word metric.
\end{proof}

For \(g\in G\), the \emph{stable translation length} of \(g\) is
\[
        \tau(g)=\lim_{n\to\infty}\frac{|g^n|}{n}.
\]
The limit exists because the sequence \(a_n=|g^n|\) is subadditive:
\[
        |g^{n+m}|\leq |g^n|+|g^m|.
\]
By Fekete's lemma,
\[
        \tau(g)=\inf_{n\geq 1}\frac{|g^n|}{n}.
\]
We shall use the following consequence.

\begin{prop}
\label{prop:tau}
For every \(g\in G\) and every \(n\geq 1\),
\[
        |g^n|\geq n\tau(g).
\]
\end{prop}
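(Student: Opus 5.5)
The inequality follows directly from the infimum characterization of the stable translation length. First, the sequence \(a_m=|g^m|\) is subadditive: \(g^{m+m'}=g^m g^{m'}\), and the word norm satisfies the triangle inequality. By Fekete's lemma, as recorded just before the statement,
\[
        \tau(g)=\lim_{m\to\infty}\frac{|g^m|}{m}=\inf_{m\geq 1}\frac{|g^m|}{m}.
\]

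Now fix \(n\geq 1\). Since \(\tau(g)\) is the infimum of \(|g^m|/m\) over all \(m\geq 1\), taking \(m=n\) gives \(\tau(g)\leq |g^n|/n\). Multiplying by \(n>0\) yields \(|g^n|\geq n\tau(g)\), as claimed.

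If one prefers not to quote Fekete's lemma as a black box, there is a direct route. Subadditivity iterated gives \(|g^{nk}|\leq k|g^n|\) for all \(k\geq 1\). Dividing by \(nk\) gives
\[
        \frac{|g^{nk}|}{nk}\leq \frac{|g^n|}{n}.
\]
The left side is a subsequence of \(|g^m|/m\), so letting \(k\to\infty\) it converges to \(\tau(g)\). Hence \(\tau(g)\leq |g^n|/n\). This only uses that the limit defining \(\tau(g)\) exists, which the paper has already established, and it avoids the full Fekete argument.

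There is no real obstacle here. The only point needing care is that the subsequence argument relies on the existence of the full limit \(\tau(g)\); if one did not know this, one would instead bound \(\liminf_{m}|g^m|/m\), which would suffice equally well.
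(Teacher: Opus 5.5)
Your proof is correct and is the paper's argument: the paper derives the proposition from the Fekete characterization $\tau(g)=\inf_{n\geq 1}|g^n|/n$, exactly as in your first paragraph. Your alternative subsequence argument via $|g^{nk}|\leq k|g^n|$ is also valid, though it still uses the existence of the limit, which is itself the main content of Fekete's lemma.
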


\subsection{Hyperbolic groups and boundary facts}
\label{subsec:hyp}
We say that \(G\) is \(\delta\)-hyperbolic, with respect to the generating set \(S\),
if
\begin{equation}
\label{eqn:hyperbolic}
        (x\mid z)_t
        \geq
        \min\{(x\mid y)_t,(y\mid z)_t\}-\delta
\end{equation}
for all \(x,y,z,t\in G\).  Hyperbolicity is independent of the finite generating set,
although the constant \(\delta\) depends on the generating set.  Throughout this
subsection and in the proof of Theorem~\ref{thm:hyp dichotomy}, we assume that \(G\) is
\(\delta\)-hyperbolic.

We will use the following immediate consequences of hyperbolicity.

\begin{prop}
\label{prop:easy hyp}
For all \(w,x,y,z\in G\), the following hold.
\begin{enumerate}[label=\textup{(\arabic*)}]
    \item \label{easy hyp 1}
    \[
        (w\mid z)
        \geq
        \min\{(w\mid x),(x\mid y),(y\mid z)\}-2\delta.
    \]
    \item \label{easy hyp 2}
    If
    \[
        (y\mid z)<(x\mid z)-\delta,
    \]
    then
    \[
        (x\mid y)\leq (y\mid z)+\delta.
    \]
\end{enumerate}
\end{prop}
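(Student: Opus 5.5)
Both parts follow from the four-point condition \eqref{eqn:hyperbolic} with base point \(t=1\); the base point is suppressed throughout.

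For \ref{easy hyp 1}, apply \eqref{eqn:hyperbolic} twice, inserting one intermediate point at a time. First, with the triple \((w,x,y)\),
\[
        (w\mid y)\geq \min\{(w\mid x),(x\mid y)\}-\delta .
\]
Then, with the triple \((w,y,z)\),
\[
        (w\mid z)\geq \min\{(w\mid y),(y\mid z)\}-\delta .
\]
Substituting the first bound into the second gives
\[
        (w\mid z)\geq \min\bigl\{\min\{(w\mid x),(x\mid y)\}-\delta,\,(y\mid z)\bigr\}-\delta .
\]
The right-hand side is at least
\[
        \min\{(w\mid x),(x\mid y),(y\mid z)\}-2\delta,
\]
since subtracting \(\delta\) from only one entry of the inner minimum can lower it by at most \(\delta\).

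For \ref{easy hyp 2}, apply \eqref{eqn:hyperbolic} to the triple \((y,x,z)\), with \(x\) as the intermediate point:
\[
        (y\mid z)\geq \min\{(y\mid x),(x\mid z)\}-\delta .
\]
Suppose the minimum were attained by \((x\mid z)\). Then \((y\mid z)\geq (x\mid z)-\delta\), contradicting the hypothesis \((y\mid z)<(x\mid z)-\delta\). So the minimum is \((x\mid y)\), and we get \((y\mid z)\geq (x\mid y)-\delta\), which rearranges to \((x\mid y)\le (y\mid z)+\delta\). Symmetry of the Gromov product is used implicitly.

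The only point requiring any care is choosing which point serves as the intermediate vertex in each application of \eqref{eqn:hyperbolic}.
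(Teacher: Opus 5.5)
Your proof is correct and follows the same argument as the paper. Part (1) uses two applications of \eqref{eqn:hyperbolic}, and part (2) applies it to the triple \((y,x,z)\); your case split on which term attains the minimum is only a rephrasing of the paper's proof by contradiction.
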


\begin{proof}
Part \textup{(1)} is obtained by applying \(\delta\)-hyperbolicity twice.  For
\textup{(2)}, apply \eqref{eqn:hyperbolic} to the triple \(y,x,z\):
\[
        (y\mid z)\geq \min\{(y\mid x),(x\mid z)\}-\delta.
\]
If \((x\mid y)>(y\mid z)+\delta\), then, together with the hypothesis
\((x\mid z)>(y\mid z)+\delta\), both terms inside the minimum are larger than
\((y\mid z)+\delta\).  This would force \((y\mid z)>(y\mid z)\), a
contradiction.
\end{proof}

The next estimate is the form of the local-to-global principle needed in the proof.

\begin{prop}[Piecewise-geodesic lower bound]
\label{prop:hyp-piecewise}
Let
\[
        \mathbf s=(s_1,\ldots,s_n)\in G^{\times n}.
\]
Suppose that
\[
        |s_i|>2M+2\delta
        \qquad \text{for }1\leq i\leq n,
\]
and
\[
        (s_i^{-1}\mid s_{i+1})\leq M
        \qquad \text{for }1\leq i<n.
\]
Then
\[
        |\mathbf s|
        \geq
        \sum_{i=1}^n |s_i|-2(M+\delta)(n-1).
\]
\end{prop}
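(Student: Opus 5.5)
We argue by induction on \(n\), proving a stronger statement that carries along a bound on a Gromov product controlling the next concatenation. Write \(g_j=s_1\cdots s_j\) for the partial products, so \(|\mathbf s|=|g_n|\). The natural auxiliary quantity is \((g_{j}^{-1}\mid s_{j+1})\), which measures the cancellation at the \(j\)-th junction. By Proposition~\ref{prop:easy metric}\ref{easy metric 2},
\[
        |g_{j+1}|=|g_j|+|s_{j+1}|-2(g_j^{-1}\mid s_{j+1}).
\]
Therefore it suffices to show that \((g_j^{-1}\mid s_{j+1})\le M+\delta\) for every \(1\le j<n\). Summing the resulting inequalities \(|g_{j+1}|\ge |g_j|+|s_{j+1}|-2(M+\delta)\) over \(j\) then gives the claimed bound.

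To bound the junction term we pass to the base point \(g_j\). By left-invariance, \((g_j^{-1}\mid s_{j+1})=(1\mid g_{j+1})_{g_j}\). This Gromov product is taken at the vertex \(g_j\), and compares the direction back to the origin with the direction forward along \(s_{j+1}\). Similarly, the local hypothesis \((s_j^{-1}\mid s_{j+1})\le M\) translates to \((g_{j-1}\mid g_{j+1})_{g_j}\le M\). The aim is to show that, as seen from \(g_j\), the origin \(1\) lies in the same direction as \(g_{j-1}\). In other words, we want \((1\mid g_{j-1})_{g_j}\) to be large, specifically larger than \(M+\delta\). Once that is known, apply Proposition~\ref{prop:easy hyp}\ref{easy hyp 2} at base point \(g_j\) with \(y=g_{j+1}\), \(x=g_{j-1}\), \(z=1\). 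Its hypothesis \((g_{j+1}\mid 1)_{g_j}<(g_{j-1}\mid 1)_{g_j}-\delta\) cannot be used directly. Instead we use the four-point condition \eqref{eqn:hyperbolic} in the form
\[
        M\ge (g_{j-1}\mid g_{j+1})_{g_j}\ge \min\{(g_{j-1}\mid 1)_{g_j},(1\mid g_{j+1})_{g_j}\}-\delta.
\]
If the first term exceeds \(M+\delta\), then the minimum must be attained by the second term. This yields \((1\mid g_{j+1})_{g_j}\le M+\delta\), as desired.

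It remains to show that \((1\mid g_{j-1})_{g_j}>M+\delta\), and this is where the induction enters. By Proposition~\ref{prop:easy metric}\ref{easy metric 4} (translated to base point \(g_j\)),
\[
        (1\mid g_{j-1})_{g_j}=(g_j^{-1}\mid s_j^{-1})=|s_j|-(g_{j-1}^{-1}\mid s_j).
\]
For \(j=1\) this equals \(|s_1|>2M+2\delta\), since \(g_0=1\). For \(j\ge2\), the inductive hypothesis gives \((g_{j-1}^{-1}\mid s_j)\le M+\delta\), so the quantity is at least \(|s_j|-M-\delta>M+\delta\). This is exactly where the length hypothesis \(|s_i|>2M+2\delta\) is used. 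The main obstacle is this bookkeeping: checking carefully the identity \((1\mid g_{j-1})_{g_j}=|s_j|-(g_{j-1}^{-1}\mid s_j)\) and confirming that the same constant \(M+\delta\) propagates without growing. Both follow from the computations above, so the induction closes, and summing the junction estimates proves Proposition~\ref{prop:hyp-piecewise}.
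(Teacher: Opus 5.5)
Your proof is correct and is essentially the paper's argument: the paper also proves by induction that the junction term $(p_r^{-1}\mid s_{r+1})$ is at most $M+\delta$, using $(p_r^{-1}\mid s_r^{-1})=|s_r|-(p_{r-1}^{-1}\mid s_r)>M+\delta$ and then one application of hyperbolicity to the same three points. Your base-point-$g_j$ picture is just the left translate by $g_j^{-1}$ of the paper's triple $x=p_r^{-1}$, $y=s_{r+1}$, $z=s_r^{-1}$ at base point $1$. Part (2) of Proposition~\ref{prop:easy hyp} in fact applies directly if you label $x=1$, $y=g_{j+1}$, $z=g_{j-1}$, so your detour through the four-point condition is unnecessary but harmless.
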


\begin{proof}
Set \(p_r=s_1\cdots s_r\).  We first prove that, for \(1\leq r<n\),
\[
        (p_r^{-1}\mid s_{r+1})\leq M+\delta.
\]
For \(r=1\), this is one of the hypotheses.  Suppose \(r>1\) and that
\[
        (p_{r-1}^{-1}\mid s_r)\leq M+\delta.
\]
Using Proposition~\ref{prop:easy metric}\textup{(\ref{easy metric 2})}, we have
\[
        |p_r|=|p_{r-1}s_r|
        \geq
        |p_{r-1}|+|s_r|-2(M+\delta).
\]
Therefore
\[
\begin{aligned}
        (p_r^{-1}\mid s_r^{-1})
        &=
        \frac12\bigl(|p_r|+|s_r|-|p_{r-1}|\bigr)  \\
        &\geq
        |s_r|-(M+\delta)
        >
        M+\delta.
\end{aligned}
\]
Since \((s_r^{-1}\mid s_{r+1})\leq M\), Proposition~\ref{prop:easy hyp}\textup{(\ref{easy hyp 2})}, applied to
\[
        x=p_r^{-1},
        \qquad
        y=s_{r+1},
        \qquad
        z=s_r^{-1},
\]
gives
\[
        (p_r^{-1}\mid s_{r+1})\leq M+\delta.
\]
This proves the claim.

We now prove the proposition by induction on \(n\).  The case \(n=1\) is immediate.
For \(n>1\), using the claim with \(r=n-1\) and applying the induction hypothesis
to \((s_1,\ldots,s_{n-1})\), we obtain
\[
\begin{aligned}
        |\mathbf s|
        =|p_n|
        &=|p_{n-1}s_n| \\
        &=|p_{n-1}|+|s_n|-2(p_{n-1}^{-1}\mid s_n) \\
        &\geq
        \sum_{i=1}^n |s_i|-2(M+\delta)(n-1).
\end{aligned}
\]
\end{proof}

We next recall the boundary language used below.  If
\(x=(x_i)_{i\geq 1}\) and \(y=(y_i)_{i\geq 1}\) are sequences in \(G\), 
\[
        (x\mid y)_t
        =
        \liminf_{i,j\to\infty}(x_i\mid y_j)_t.
\]
A sequence \(x=(x_i)\) is a \emph{Gromov sequence} if
\[
        (x\mid x)=\infty.
\]
This condition is independent of the basepoint.  Two Gromov sequences \(x,y\) are
\emph{asymptotic} if
\[
        (x\mid y)_t=\infty,
\]
which also does not depend on the basepoint.
Hyperbolicity of $G$ implies that the asymptotic relation is an equivalence relation on the set of Gromov sequences.
The Gromov boundary \(\partial G\) is the set of asymptotic classes of Gromov
sequences.  If \(x=(x_i)\) represents \(\xi\in\partial G\), we write
\[
        x_i\to \xi.
\]

\begin{prop}
\label{prop:subsequence}
Every sequence \((x_i)\) in \(G\) with \(|x_i|\to\infty\) along a subsequence has a
Gromov subsequence.
\end{prop}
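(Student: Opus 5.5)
We extract a Gromov subsequence by a diagonal argument, using only finiteness of balls in \(G\) and the fact that Gromov products are bounded by distances (Proposition~\ref{prop:easy metric}\textup{(\ref{easy metric 3})}). First pass to a subsequence with \(|x_i|\to\infty\), so we may assume \(|x_i|\to\infty\) outright. Since the condition \((x\mid x)=\infty\) means \(\liminf_{i,j\to\infty}(x_i\mid x_j)=\infty\), we must find a subsequence along which \((x_i\mid x_j)\ge N\) for all sufficiently large \(i,j\), for every fixed \(N\).

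For each \(i\), fix a geodesic from \(1\) to \(x_i\) and, for \(N\le |x_i|\), let \(p_N(x_i)\) be its vertex at distance \(N\) from \(1\). The key observation is that if \(p_N(x_i)=p_N(x_j)=p\), then
\[
(x_i\mid x_j)=\tfrac12\bigl(|x_i|+|x_j|-d(x_i,x_j)\bigr)\ge \tfrac12\bigl(|x_i|+|x_j|-d(x_i,p)-d(p,x_j)\bigr)=N,
\]
since \(d(x_i,p)=|x_i|-N\) and \(d(x_j,p)=|x_j|-N\). Note that hyperbolicity is not needed for this step.

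Now diagonalize. The sphere of radius \(1\) is finite, and all but finitely many \(x_i\) have \(|x_i|\ge1\), so by pigeonhole there is an infinite subsequence on which \(p_1\) is constant. Inside it, infinitely many terms have \(|x_i|\ge 2\), and by finiteness of the sphere of radius \(2\) there is a further infinite subsequence on which \(p_2\) is constant. Continue inductively to get nested infinite subsequences \(I_1\supseteq I_2\supseteq\cdots\). Choose indices \(i_1<i_2<\cdots\) with \(i_N\in I_N\).

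For \(N\le m<n\) we have \(i_m,i_n\in I_N\), hence \((x_{i_m}\mid x_{i_n})\ge N\). When \(m=n\), \((x_{i_n}\mid x_{i_n})=|x_{i_n}|\to\infty\). Therefore \(\liminf_{m,n}(x_{i_m}\mid x_{i_n})=\infty\), and \((x_{i_k})\) is a Gromov sequence.

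The only point requiring care is the bookkeeping that each \(I_N\) remains infinite after discarding the finitely many terms of small length. This works because \(|x_i|\to\infty\) on the initial subsequence and because finite generation makes spheres finite. The result is otherwise routine.
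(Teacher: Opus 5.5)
Your proof is correct. The paper gives no proof of this proposition and treats it as a standard fact. The usual textbook route is sequential compactness of \(G\cup\partial G\) for proper hyperbolic spaces, or Arzel\`a--Ascoli applied to geodesics from \(1\).

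Your diagonal (K\"onig's lemma) argument is a self-contained substitute. It checks the paper's definition \(\liminf_{i,j}(x_i\mid x_j)=\infty\) directly, and it isolates what the statement really depends on. As you note, hyperbolicity plays no role here. Finiteness of spheres, by contrast, is essential: in a wedge of infinitely many rays, which is \(0\)-hyperbolic, the points at distance \(i\) on the \(i\)-th ray have pairwise Gromov product \(0\). Hyperbolicity only enters later, to make asymptoticity an equivalence relation and in facts such as Lemma~\ref{cor:hyp-boundary-separation}.

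The constant values of \(p_N\) on your nested sets \(I_N\) also assemble into a geodesic ray from \(1\). So your argument is the combinatorial core of the Arzel\`a--Ascoli picture, without the compactness machinery the paper does not need.
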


\begin{lemma}[Boundary separation]
\label{cor:hyp-boundary-separation}
If \(x\) and \(y\) are Gromov sequences that are not asymptotic, then there exists
\(M\geq 0\) such that
\[
        (x_i\mid y_j)\leq M
        \qquad \text{for all }i,j\geq 1.
\]
\end{lemma}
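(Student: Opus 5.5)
The proof goes by contradiction, using only the definition of non-asymptotic Gromov sequences and the four-point inequality \eqref{eqn:hyperbolic}. Since \(x\) and \(y\) are not asymptotic, \((x\mid y)=\liminf_{i,j\to\infty}(x_i\mid y_j)\) is finite. Suppose the conclusion fails. Then for every \(M\) some pair \((i,j)\) has \((x_i\mid y_j)>M\), so we obtain pairs \((i_m,j_m)\) with \((x_{i_m}\mid y_{j_m})\to\infty\).

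First I will show that \(i_m\to\infty\) and \(j_m\to\infty\), after passing to a subsequence. By Proposition~\ref{prop:easy metric}\textup{(\ref{easy metric 3})}, \((x_{i_m}\mid y_{j_m})\leq\min\{|x_{i_m}|,|y_{j_m}|\}\). If infinitely many \(i_m\) equalled a fixed index \(i\), this would bound \((x_{i_m}\mid y_{j_m})\) by \(|x_i|\), which is impossible. So every index occurs only finitely often, and we may pass to a subsequence along which \(i_m\to\infty\). The same argument applies to \(j_m\).

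Next I use that \(x\) and \(y\) are Gromov sequences. Given \(N\), choose \(i_0\) so that \((x_i\mid x_{i'})\geq N\) for all \(i,i'\geq i_0\), and choose \(j_0\) so that \((y_j\mid y_{j'})\geq N\) for all \(j,j'\geq j_0\). Now take arbitrary \(i\geq i_0\) and \(j\geq j_0\), and pick \(m\) large enough that \(i_m\geq i_0\), \(j_m\geq j_0\) and \((x_{i_m}\mid y_{j_m})\geq N\). Proposition~\ref{prop:easy hyp}\textup{(\ref{easy hyp 1})}, applied to the chain \(x_i,\,x_{i_m},\,y_{j_m},\,y_j\), gives
\[
(x_i\mid y_j)\geq \min\{(x_i\mid x_{i_m}),(x_{i_m}\mid y_{j_m}),(y_{j_m}\mid y_j)\}-2\delta\geq N-2\delta .
\]
This holds for all \(i\geq i_0\) and \(j\geq j_0\), so \(\liminf_{i,j}(x_i\mid y_j)\geq N-2\delta\). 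Since \(N\) was arbitrary, \((x\mid y)=\infty\), which says that \(x\) and \(y\) are asymptotic. This contradicts the hypothesis.

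The one delicate point is the reduction to indices tending to infinity. A large Gromov product at a fixed pair \((i,j)\) is not by itself a contradiction, because the \(\liminf\) ignores finitely many terms. This is why the bound from Proposition~\ref{prop:easy metric}\textup{(\ref{easy metric 3})} is needed to force \(i_m,j_m\to\infty\). Once that is in place, the rest is a single application of hyperbolicity.
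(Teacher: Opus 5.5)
Your proof is correct, and it fills a real gap in the source: the paper gives no argument for this lemma and only cites it as standard \cite{vaisala2005gromov}. Your version is self-contained. It uses only two facts stated in the paper: the four-point inequality, in the form of Proposition~\ref{prop:easy hyp}\textup{(\ref{easy hyp 1})}, and the bound $(u\mid v)\le\min\{|u|,|v|\}$ from Proposition~\ref{prop:easy metric}\textup{(\ref{easy metric 3})}.

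You correctly single out the one delicate step: the indices $i_m,j_m$ must tend to infinity before the Gromov property of $x$ and $y$ can be applied. No subsequence is needed there, because if each index value occurs only finitely often, then $i_m\to\infty$ already.

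If you want an explicit constant, the same two ingredients also work directly, without contradiction:
\begin{enumerate}
\item Set $L=(x\mid y)<\infty$.
\item Choose $i_0,j_0$ so that $(x_i\mid x_{i'})>L+1+2\delta$ and $(y_j\mid y_{j'})>L+1+2\delta$ whenever all indices are at least $i_0$ (respectively $j_0$).
\item Pick $i'\ge i_0$ and $j'\ge j_0$ with $(x_{i'}\mid y_{j'})\le L+1$.
\item Apply the four-point inequality along the chain $x_{i'},x_i,y_j,y_{j'}$. This forces $(x_i\mid y_j)\le L+1+2\delta$ for all $i\ge i_0$ and $j\ge j_0$.
\item The remaining pairs are bounded by $\max_{i<i_0}|x_i|$ or by $\max_{j<j_0}|y_j|$.
\end{enumerate}
This gives $M$ explicitly, whereas your contradiction argument only shows that some such $M$ exists. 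Either version is enough for how the lemma is used later in the paper.
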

This is a standard fact, see for example \cite{vaisala2005gromov}.

In a hyperbolic group, elements with positive stable translation length are called
\emph{loxodromic}.  We shall use the standard fact that, for \(g\in G\),
\[
        \tau(g)>0
        \qquad\Longleftrightarrow\qquad
        g\text{ has infinite order}.
\]

\begin{prop}
\label{prop:loxodromic}
In a hyperbolic group, \(\tau(g)>0\) if and only if \(g\) has infinite order.
\end{prop}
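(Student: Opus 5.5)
The implication ``\(\tau(g)>0\Rightarrow g\) has infinite order'' is immediate.  If \(g\) has finite order \(m\), then the sequence \(|g^n|\) takes only finitely many values.  Hence \(\tau(g)=\lim|g^n|/n=0\).  The real content is the converse.  So assume \(g\) has infinite order, and aim to show \(|g^n|\) grows linearly.

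First I would reduce to the case where some power of \(g\) has very small cancellation with itself.  Since \(G\) is finitely generated and \(g\) has infinite order, the powers \(g^n\) are pairwise distinct, so \(|g^n|\to\infty\).  Now use the standard fact for \(\delta\)-hyperbolic groups that an element of infinite order is not conjugate into a bounded set of ``elliptic'' type.  Concretely, I would argue as follows.  Let \(h=tg^Nt^{-1}\) be a conjugate of a power of \(g\) chosen with \(|h|\) minimal among conjugates of \(g^N\), for a suitable large \(N\).  For a cyclically minimal element one has \((h^{-1}\mid h)\le C\delta\) for a universal constant \(C\), by the usual thin-triangle argument.  If \((h^{-1}\mid h)\) were large, then conjugating by a prefix of a geodesic for \(h\) of length about \((h^{-1}\mid h)-O(\delta)\) would shorten \(h\), contradicting minimality.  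I expect this to be the main obstacle: making the shortening argument precise in the Gromov-product formalism used here.

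Next, choose \(N\) so large that \(|h|>2M+2\delta\) with \(M=C\delta\).  This is possible because conjugates of \(g^N\) are all distinct from finitely many short elements for large \(N\).  To justify this, note that there are only finitely many elements of length \(\le 2M+2\delta\), and each has only finitely many \(N\) for which it is conjugate to \(g^N\).  That finiteness holds because a short element has bounded translation length, while the conjugacy classes of \(g^N\) for distinct \(N\) are distinct; otherwise \(g\) would be commensurated in a way giving finite order.  This point also needs care.  A cleaner route is to use the standard lemma that in a hyperbolic group only finitely many conjugacy classes of finite-order elements exist, together with a pigeonhole argument on minimal-length conjugates.

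Then apply Proposition~\ref{prop:hyp-piecewise} to the tuple \((h,\dots,h)\) of length \(n\).  The hypotheses hold since \((h^{-1}\mid h)\le M\) and \(|h|>2M+2\delta\).  This gives \(|h^n|\ge n(|h|-2M-2\delta)\), which is linear in \(n\) with a positive coefficient.  Since \(|g^{Nn}|\ge|h^n|-2|t|\), we get \(\tau(g)\ge(|h|-2M-2\delta)/N>0\).
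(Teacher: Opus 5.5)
The paper gives no proof of this proposition; it quotes it as a standard fact about hyperbolic groups. You flagged the shortening argument as the main obstacle, but that part is routine. It should give \((h^{-1}\mid h)\le C\delta+C'\) rather than \(C\delta\): in \(\mathbb Z/2*\mathbb Z/2\) the Cayley graph is a line and \(\delta=0\), yet the involution \(a\) is cyclically minimal with \((a^{-1}\mid a)=1\). Your easy direction and your final application of Proposition~\ref{prop:hyp-piecewise} are also fine.

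\textbf{The gap.} It lies in the step where you choose \(N\) so that every conjugate of \(g^N\) has length \(>2M+2\delta\). This is the entire content of the proposition, since it is exactly what excludes ``parabolic'' behaviour, and neither of your justifications establishes it.

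\emph{Non-conjugacy of distinct powers is not available a priori.} In \(\mathrm{BS}(1,2)=\langle a,t\mid tat^{-1}=a^2\rangle\) the infinite-order element \(a\) is conjugate to \(a^2\). In a hyperbolic group non-conjugacy is true, but the standard proof uses that \(\tau\) is a conjugacy invariant with \(\tau(g^N)=N\tau(g)\) and \(\tau(g)>0\). That is what you are trying to prove, so the argument is circular.

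\emph{Bounded translation length of short elements.} This rules out large \(N\) only if you already know \(\tau(g^N)\to\infty\), which is again the conclusion.

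\emph{The ``cleaner route''.} Every \(g^N\) has infinite order, so finiteness of torsion conjugacy classes says nothing about them. Pigeonholing the minimal-length conjugates \(t_Ng^Nt_N^{-1}\) only yields \(g^N\sim g^{N'}\) for some \(N\neq N'\). That is no contradiction, because you have no control over \(t_Ngt_N^{-1}\).

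\textbf{How to close it.} Choose a conjugator that shortens \(g\) and \(g^N\) simultaneously.

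Assume \(\tau(g)=0\). Proposition~\ref{prop:hyp-piecewise}, applied to \((g^N,\dots,g^N)\), forces \((g^{-N}\mid g^N)\ge\tfrac12|g^N|-\delta\) for every \(N\), since otherwise it gives \(\tau(g^N)>0\).

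Let \(m_N\) be the vertex at distance \(\lfloor |g^N|/2\rfloor\) from \(1\) on a geodesic \([1,g^N]\).

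\emph{Shortening \(g^N\).} The triangle \((1,g^N,g^{2N})\) has Gromov product \((g^{-N}\mid g^N)\) at the vertex \(g^N\). So points at equal distance at most this value from \(g^N\) on its two sides through \(g^N\) are \(4\delta\)-close. This gives \(|m_N^{-1}g^Nm_N|=d(m_N,g^Nm_N)\le 6\delta+2\).

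\emph{Shortening \(g\).} Compare \([1,g^N]\) with its translate \(g[1,g^N]\), using Proposition~\ref{prop:easy hyp}, item (1), and Proposition~\ref{prop:easy metric}, item (5). This gives \(|m_N^{-1}gm_N|\le 3|g|+4\delta\).

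\emph{Pigeonhole.} Put \(c_N=m_N^{-1}gm_N\). The pairs \((c_N,c_N^N)\) then range over a fixed finite set as \(N\) varies. Hence there are \(N\neq N'\) with \(c_N=c_{N'}=c\) and \(c^N=c^{N'}\), so \(c\), and therefore its conjugate \(g\), has finite order.

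With this argument in place, the detour through cyclically minimal conjugates is no longer needed.
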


For a loxodromic element \(g\), we write \(g^+,g^-\in\partial G\) for the limits
\[
        g^n\to g^+,
        \qquad
        g^{-n}\to g^-.
\]
Left multiplication induces the usual action of \(G\) on \(\partial G\).  Right
multiplication does not give a new action on the boundary; rather, it moves every
sequence only a bounded distance.

\begin{proposition}[Right multiplication and boundary limits]
\label{prop:hyp-right-mult-limit}
Let \((x_i)\) be a Gromov sequence and let \(h\in G\).  Then \((x_i h)\) is a
Gromov sequence asymptotic to \((x_i)\).  In particular, if \(x_i\to \xi\in\partial G\),
then
\[
        x_i h\to \xi.
\]
\end{proposition}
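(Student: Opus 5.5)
The plan is to compare Gromov products of \(x_ih\) with those of \(x_i\) using only the triangle inequality, without invoking hyperbolicity. Set \(c=|h|\). For all \(i,j\) we have \(d(x_ih,x_i)=|h|=c\) by Proposition~\ref{prop:easy metric}\textup{(\ref{easy metric 5})}, since \(x_i^{-1}x_ih=h\). Each of the three distances entering a Gromov product based at \(1\) therefore changes by at most a bounded amount when \(x_i\) is replaced by \(x_ih\). Concretely,
\[
\bigl||x_ih|-|x_i|\bigr|\le c,
\qquad
\bigl|d(x_ih,x_jh)-d(x_i,x_j)\bigr|\le 2c,
\]
and in fact \(d(x_ih,x_jh)\) is at most \(d(x_i,x_j)+2c\). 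Substituting these into the definition of the Gromov product gives
\[
(x_ih\mid x_jh)\ge (x_i\mid x_j)-2c,
\qquad
(x_ih\mid x_j)\ge (x_i\mid x_j)-\tfrac{3}{2}c,
\]
with a crude but uniform constant that does not depend on \(i\) or \(j\).

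Next I take \(\liminf_{i,j\to\infty}\). From the first inequality, \((x_i\mid x_j)\to\infty\) forces \((x_ih\mid x_jh)\to\infty\), so \((x_ih)\) is a Gromov sequence. From the second inequality, \(\liminf_{i,j}(x_ih\mid x_j)=\infty\), so \((x_ih)\) and \((x_i)\) are asymptotic. This is the basepoint-\(1\) version of both definitions, and since the paper records that these notions are basepoint-independent, that suffices.

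Finally, the statement \(x_ih\to\xi\) follows directly. The boundary point \(\xi\) is by definition the asymptotic class of \((x_i)\), and we have just shown that \((x_ih)\) lies in that same class. There is no real obstacle here; the only point requiring care is keeping the bounded errors uniform in \(i\) and \(j\), so that they survive the \(\liminf\).
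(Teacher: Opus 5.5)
Your proof is correct and follows the same idea as the paper: since \(d(x_i,x_ih)=|h|\) is uniformly bounded, the two sequences are asymptotic and define the same boundary point. The only difference is that you write out the uniform Gromov-product estimates that the paper leaves implicit; your second bound can be sharpened to \((x_ih\mid x_j)\ge (x_i\mid x_j)-|h|\), though this does not matter.
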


\begin{proof}
For every \(i\),
\[
        d(x_i,x_i h)=|h|.
\]
Thus the two sequences remain a uniformly bounded distance apart.  Hence they are
asymptotic and define the same boundary point.
\end{proof}

We also use the following standard boundary facts.  Recall that a subgroup of a
hyperbolic group is called \emph{elementary} if it is finite or virtually cyclic.

\begin{lemma}[Boundary dynamics of loxodromic elements]
\label{lem:lox}
Let \(G\) be a hyperbolic group.
\begin{enumerate}[label=\textup{(\arabic*)}]
    \item \label{two fixed points}
    A loxodromic element \(g\) has exactly two fixed points on \(\partial G\), namely
    \(g^+\) and \(g^-\).
    \item \label{shared-fixed-point}
    If \(g,h\in G\) are loxodromic, then either
    \[
        \{g^+,g^-\}=\{h^+,h^-\},
    \]
    or
    \[
        \{g^+,g^-\}\cap\{h^+,h^-\}=\emptyset.
    \]
\end{enumerate}
\end{lemma}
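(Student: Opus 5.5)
The statement is a standard fact, so the plan is to derive both parts from the basic dynamics of a loxodromic element on \(\partial G\) (north--south dynamics), together with the quasi-geodesic behaviour of the orbit \(n\mapsto g^n\). By Proposition~\ref{prop:tau} and Proposition~\ref{prop:loxodromic}, the orbit satisfies \(|g^n|\ge n\tau(g)\) with \(\tau(g)>0\). Hence \(n\mapsto g^n\) is a quasi-geodesic, and by the Morse lemma it stays uniformly close to a bi-infinite geodesic \(\gamma\) with endpoints \(g^-\neq g^+\). The points \(g^\pm\) are distinct because \(d(g^{-n},g^n)=|g^{2n}|\ge 2n\tau(g)\), so \((g^{-n}\mid g^n)\) stays bounded.

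For part~\ref{two fixed points}, first note that \(g\cdot g^{\pm}=g^{\pm}\), since \(g\cdot g^n=g^{n+1}\) is the same sequence shifted. For uniqueness, let \(\xi\neq g^{\pm}\). I would show that \(g^k\xi\to g^+\) as \(k\to\infty\), which forces \(g\xi\neq\xi\). Represent \(\xi\) by a sequence \((x_i)\) and let \(C=(x\mid g^+)\), which is finite. The Gromov product \((x_i\mid g^n)\) measures how long the path to \(x_i\) follows \(\gamma\) toward \(g^+\), and this is bounded by \(C\); similarly it follows the ray toward \(g^-\) for only a bounded length. Translating by \(g^k\) moves the basepoint \(1\) to \(g^k\), which lies near \(\gamma\) at distance about \(k\tau(g)\) toward \(g^+\). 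Then the estimate
\[
(g^kx_i\mid g^{m})\ \gtrsim\ k\tau(g)-C-O(\delta)
\]
follows from \(\delta\)-hyperbolicity via Proposition~\ref{prop:easy hyp}, using the thin-triangle picture along \(\gamma\). Letting \(k\to\infty\) gives \(g^k\xi\to g^+\). If \(g\xi=\xi\), then \(g^k\xi=\xi\) for all \(k\), so \(\xi=g^+\), a contradiction.

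For part~\ref{shared-fixed-point}, suppose \(g^+=h^+\) (the other cases are symmetric, replacing \(g\) or \(h\) by its inverse). The plan is to show that the commensurator-type element \(g^{-a}h^{b}\) lies in a finite set for a suitable cofinal choice of exponents. Both orbits \(\{g^n\}\) and \(\{h^n\}\) fellow-travel geodesic rays from \(1\) to the common point \(g^+\). Two geodesic rays with the same endpoint eventually lie within \(O(\delta)\) of each other, after a bounded reparametrization. So for each large \(b\) there is an \(a\) with \(d(g^a,h^b)\le R\), where \(R\) depends only on \(\delta\), \(g\), and \(h\). By pigeonhole in the finite ball \(B_R\), there are pairs with \(g^{-a}h^b=g^{-a'}h^{b'}\), which gives \(h^{b-b'}=g^{a-a'}\) with \(b\neq b'\). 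Thus some powers satisfy \(h^p=g^q\) with \(p\neq 0\). Then \(h^{\pm}=(h^p)^{\pm}\) are the fixed points of \(g^q\), and \(g^q\) is loxodromic with fixed points \(g^\pm\) by part~\ref{two fixed points}. The sign of \(q\) is forced by \(g^+=h^+\). This gives \(\{g^+,g^-\}=\{h^+,h^-\}\).

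The main obstacle is the quantitative hyperbolic geometry: converting the Morse-lemma fellow-travelling into precise Gromov-product inequalities stated in the paper's language. In particular, the convergence \(g^k\xi\to g^+\) and the uniform constant \(R\) in part (2) require careful bookkeeping of \(\delta\) and the quasi-geodesic constants. Since these are standard, the write-up would cite them (e.g.\ Gromov, or Ghys--de la Harpe) and keep only the pigeonhole argument explicit.
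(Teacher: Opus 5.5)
The paper does not prove this lemma. It records it as standard and refers to the classification of elementary subgroups in \cite[Section~3]{kapovich2002boundaries}. Your outline is a correct, self-contained version of the usual argument: north--south dynamics for (1), and a pigeonhole argument in the finite ball $B_R$ for (2).

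Compared with the citation, your route proves slightly more. Loxodromics sharing one fixed point have a common nontrivial power $h^p=g^q$. It also shows exactly where discreteness enters: finiteness of $B_R$ is what rules out pairs such as $z\mapsto 2z$ and $z\mapsto 2z+1$ in $\mathrm{PSL}_2(\mathbb R)$, which share exactly one fixed point. The price is that you rely on the Morse lemma and on fellow-travelling of asymptotic quasi-geodesic rays. These are not part of the Gromov-product toolkit set up in Section~\ref{sec:prelims}, so they would still have to be cited.

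Two local slips should be fixed.

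First, $d(g^{-n},g^n)=|g^{2n}|\ge 2n\tau(g)$ does not by itself bound $(g^{-n}\mid g^n)=|g^n|-\tfrac12|g^{2n}|$, because a priori you only know $|g^n|\le n|g|$. Read distinctness of $g^\pm$ off the Morse lemma you already invoke instead: a geodesic line at bounded Hausdorff distance from $n\mapsto g^n$ has two distinct ends.

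Second, in the north--south estimate the constant to subtract is a bound on $(\xi\mid g^-)$, not $C=(\xi\mid g^+)$. Write $(g^kx_i\mid g^m)=(x_i\mid g^{m-k})_{g^{-k}}$ and apply \eqref{eqn:hyperbolic} at the basepoint $g^{-k}$ with intermediate point $1$. For $m\ge k$ this gives $(g^kx_i\mid g^m)\ge |g^k|-\max\{C_-,M\}-\delta$, where $C_-$ bounds all $(x_i\mid g^{-j})$ and $M$ bounds all $(g^{-j}\mid g^{l})$. Both constants are finite by Lemma~\ref{cor:hyp-boundary-separation}. Hence $g^k\xi\to g^+$ for every $\xi\neq g^-$, and the hypothesis $\xi\neq g^+$ is used only at the last step of part (1).
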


\begin{lemma}[Elementary subgroups]
\label{lem:hyp-elementary-facts}
Let \(G\) be a hyperbolic group.
\begin{enumerate}[label=\textup{(\arabic*)}]
    \item For every \(\xi\in\partial G\), the stabilizer
    \[
        \operatorname{Stab}_G(\xi)
    \]
    is elementary.
    \item If \(H\leq G\) preserves a two-point subset of \(\partial G\), then \(H\) is
    elementary.
\end{enumerate}
\end{lemma}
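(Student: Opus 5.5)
Both statements are standard (see e.g.\ \cite{kapovich2002boundaries}). To prove them, I would establish part (2) first and then reduce part (1) to it. Besides Lemma~\ref{lem:lox} and Proposition~\ref{prop:loxodromic}, I would take the following standard facts about hyperbolic groups as known:
\begin{enumerate}[label=\textup{(\roman*)}]
    \item any two distinct points \(a\neq b\) of \(\partial G\) are joined by a bi-infinite geodesic in \(\Gamma\);
    \item any two such geodesics with the same endpoints are at Hausdorff distance at most a constant \(D=D(\delta)\);
    \item every torsion subgroup of a hyperbolic group is finite.
\end{enumerate}

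For part (2), let \(H\) preserve \(\{a,b\}\), and let \(H_0\le H\) be the subgroup of index at most \(2\) fixing both \(a\) and \(b\). It suffices to show that \(H_0\) is elementary.
\begin{enumerate}[label=\textup{(\alph*)}]
    \item Fix a geodesic \(\gamma\) from \(a\) to \(b\). After translating, assume \(\gamma\) passes within \(D\) of \(1\).
    \item For \(h\in H_0\), the path \(h\gamma\) is again a geodesic from \(a\) to \(b\), so it lies within \(D\) of \(\gamma\). Hence \(h=h\cdot 1\) lies within \(C:=2D\) of \(\gamma\), so the orbit \(H_0\) sits in the \(C\)-neighbourhood of a line.
    \item By local finiteness of \(\Gamma\), \(|H_0\cap B_n|=O(n)\).
    \item If \(H_0\) is finite we are done. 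Otherwise, by fact (iii), \(H_0\) contains an element \(g\) of infinite order, which is loxodromic by Proposition~\ref{prop:loxodromic}.
    \item The element \(g\) fixes \(a\) and \(b\), so by Lemma~\ref{lem:lox}\ref{two fixed points} we have \(\{g^\pm\}=\{a,b\}\).
    \item The powers \(g^n\) move along \(\gamma\) with coarse step \(\tau(g)>0\) (Proposition~\ref{prop:tau}). Hence every point of the \(C\)-neighbourhood of \(\gamma\) lies within a uniform distance \(R\) of some \(g^n\).
    \item So every \(h\in H_0\) satisfies \(g^{-n}h\in B_R\) for some \(n\). Thus \(\langle g\rangle\) has finite index in \(H_0\), and \(H_0\), hence \(H\), is virtually cyclic.
\end{enumerate}

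For part (1), let \(H=\operatorname{Stab}_G(\xi)\).
\begin{enumerate}[label=\textup{(\alph*)}]
    \item If \(H\) has no element of infinite order, it is a torsion subgroup and hence finite by fact (iii).
    \item Otherwise take a loxodromic \(g\in H\). Since \(g\) fixes \(\xi\), Lemma~\ref{lem:lox}\ref{two fixed points} gives \(\xi\in\{g^+,g^-\}\).
    \item For any \(h\in H\), the conjugate \(hgh^{-1}\) is loxodromic, with fixed points \(hg^\pm\), and it lies in \(H\), so it fixes \(\xi\).
    \item Thus \(\xi\in\{hg^+,hg^-\}\cap\{g^+,g^-\}\). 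Lemma~\ref{lem:lox}\ref{shared-fixed-point} then forces \(\{hg^+,hg^-\}=\{g^+,g^-\}\).
    \item Therefore \(H\) preserves the two-point set \(\{g^+,g^-\}\), and part (2) shows that \(H\) is elementary.
\end{enumerate}

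I expect the main obstacle to be the coarse-geometric step in part (2): obtaining the uniform constant \(C\) from facts (i) and (ii), and turning "\(g\) translates along \(\gamma\)" into a uniform covering radius \(R\). The first thing I would try is to compare \(g^n\cdot 1\) with points of \(\gamma\) using thinness of quasi-geodesic bigons. The lower bound \(|g^n|\ge n\tau(g)\) then ensures consecutive powers do not accumulate, and the upper bound \(d(g^n,g^{n+1})=|g|\) ensures they leave no large gaps along \(\gamma\).
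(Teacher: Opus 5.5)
Your argument is correct, and it differs from the paper in kind: the paper gives no proof of this lemma at all. It only asserts that both parts are standard consequences of the classification of elementary subgroups and cites \cite[Section~3]{kapovich2002boundaries}. What you give instead is an explicit reduction.

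\begin{enumerate}[label=\textup{(\arabic*)}]
    \item Part (1) becomes a formal consequence of part (2) and Lemma~\ref{lem:lox}. Conjugating a loxodromic $g\in\operatorname{Stab}_G(\xi)$ by any $h\in\operatorname{Stab}_G(\xi)$ gives a loxodromic sharing the fixed point $\xi$ with $g$. It therefore shares both fixed points, so $h$ preserves $\{g^+,g^-\}$.
    \item Part (2) follows from Hausdorff-closeness of geodesics with common endpoints, plus finiteness of torsion subgroups. The finite-index decomposition $H_0=\langle g\rangle\,(H_0\cap B_R)$ then finishes it.
\end{enumerate}

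The citation is cheaper. Your route, however, makes visible which inputs carry the weight. Fact (iii) is indispensable: it is the only thing excluding an infinite torsion subgroup in either part. Lemma~\ref{lem:lox}(2) is usually established in the same circle of ideas as the classification itself. So your proof is a reduction to the other boundary facts the paper already takes as standard, not a proof from first principles.

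Two small points.

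\begin{itemize}
    \item Step (c) is never used.
    \item The covering step you flag as the main obstacle needs neither bigon thinness nor the lower bound $|g^n|\ge n\tau(g)$. It only needs that $g^{+}\neq g^{-}$, the limits of $g^{n}$ and $g^{-n}$, are the two endpoints of $\gamma$. Pick $t_n$ with $d(g^n,\gamma(t_n))\le C$. Then $|t_{n+1}-t_n|\le |g|+2C$, and the $t_n$ tend to $+\infty$ and $-\infty$ in the two directions. Hence every real number is within $(|g|+2C)/2$ of some $t_n$, and $R=|g|/2+3C$ works.
\end{itemize}
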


These are standard consequences of the classification of elementary subgroups of
hyperbolic groups; see, for instance, \cite[Section~3]{kapovich2002boundaries}.

\subsection{Approximate semigroups and approximate products}
\label{subsec:approx groups}
A subset \(A\subseteq G\) is an \emph{approximate semigroup} if there is a finite set
\(X\subseteq G\) such that
\[
        A^2\subseteq AX.
\]
We call \(X\) a \emph{correction set} for \(A\).  If, in addition, \(1\in A\) and
\(A=A^{-1}\), then \(A\) is an approximate group.

\begin{remark}
\label{rem:right-left-convention}
We use the right-handed convention \(A^2\subseteq AX\).  For approximate groups
this is equivalent to the left-handed convention, after replacing \(X\) by \(X^{-1}\).
For approximate semigroups the two conventions differ in a fixed group, but passing
to the opposite group exchanges them.  Since the word metric is unchanged under this
identification, the results below have identical left-handed formulations.
\end{remark}

For the rest of this subsection, let \(A\) be an approximate semigroup with correction
set \(X\), and set
\[
        K=\max_{x\in X}|x|.
\]
Enlarging \(X\), if necessary, we may assume that \(1\in X\).  We may also assume
that \(1\in A\): replacing \(A\) by \(A\cup\{1\}\) preserves the approximate
semigroup property, does not change \(\langle A\rangle\), and changes
\(|A\cap B_n|\) by at most one.

For \(a,b\in A\), define
\[
        \Pi(a,b)=\{c\in A:c^{-1}ab\in X\}.
\]
The condition \(A^2\subseteq AX\) is precisely the assertion that
\[
        \Pi(a,b)\neq\emptyset
        \qquad\text{for all }a,b\in A.
\]
Choose, once and for all, a map
\[
        \pi:A\times A\to A
\]
such that \(\pi(a,b)\in\Pi(a,b)\), and define
\[
        \epsilon(a,b)=\pi(a,b)^{-1}ab\in X.
\]
Thus
\[
        ab=\pi(a,b)\epsilon(a,b).
\]

We extend \(\pi\) and \(\epsilon\) to tuples in \(A^*\).  For a one-tuple \((a)\), set
\[
        \pi(a)=a,
        \qquad
        \epsilon(a)=1.
\]
For \(j\geq 2\) and
\[
        \mathbf a=(a_1,\ldots,a_j)\in A^{\times j},
\]
write \(\mathbf a'=(a_2,\ldots,a_j)\).  Assuming \(\pi(\mathbf a')\) and
\(\epsilon(\mathbf a')\) have already been defined, set
\[
        \pi(\mathbf a)=\pi(a_1,\pi(\mathbf a')),
\]
and
\[
        \epsilon(\mathbf a)
        =
        \epsilon(a_1,\pi(\mathbf a'))\epsilon(\mathbf a').
\]

\begin{prop}
\label{prop:pi}
For every tuple \(\mathbf a\in A^{\times j}\), with \(j\geq 1\), one has
\[
        \pi(\mathbf a)\in A
\]
and
\[
        \overline{\mathbf a}=\pi(\mathbf a)\epsilon(\mathbf a),
\]
where
\[
        \epsilon(\mathbf a)\in X^{j-1}.
\]
In particular,
\[
        |\epsilon(\mathbf a)|\leq (j-1)K,
\]
and hence
\[
        d(\pi(\mathbf a),\overline{\mathbf a})\leq (j-1)K.
\]
\end{prop}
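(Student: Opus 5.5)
We argue by induction on the tuple length \(j\), following the recursive definition of \(\pi\) and \(\epsilon\). For \(j=1\) everything is immediate: \(\pi(a)=a\in A\) and \(\epsilon(a)=1\in X^{0}=\{1\}\), so \(\overline{(a)}=a=\pi(a)\epsilon(a)\).

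For the inductive step, take \(j\geq 2\) and write \(\mathbf a=(a_1)\circ\mathbf a'\) with \(\mathbf a'=(a_2,\ldots,a_j)\in A^{\times(j-1)}\). By the induction hypothesis, \(\pi(\mathbf a')\in A\), \(\overline{\mathbf a'}=\pi(\mathbf a')\epsilon(\mathbf a')\), and \(\epsilon(\mathbf a')\in X^{j-2}\). Since \(a_1\) and \(\pi(\mathbf a')\) both lie in \(A\), the two-variable map \(\pi(a_1,\pi(\mathbf a'))\) is defined. It lies in \(A\), and
\[
        a_1\pi(\mathbf a')=\pi(a_1,\pi(\mathbf a'))\,\epsilon(a_1,\pi(\mathbf a')),
        \qquad \epsilon(a_1,\pi(\mathbf a'))\in X .
\]
We then compute
\[
        \overline{\mathbf a}=a_1\overline{\mathbf a'}
        =a_1\pi(\mathbf a')\epsilon(\mathbf a')
        =\pi(a_1,\pi(\mathbf a'))\,\epsilon(a_1,\pi(\mathbf a'))\,\epsilon(\mathbf a')
        =\pi(\mathbf a)\epsilon(\mathbf a),
\]
and \(\epsilon(\mathbf a)\in X\cdot X^{j-2}=X^{j-1}\).

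For the metric consequences, every \(x\in X\) satisfies \(|x|\leq K\). Subadditivity of word length therefore gives \(|y|\leq (j-1)K\) for every \(y\in X^{j-1}\). Finally, Proposition~\ref{prop:easy metric}\textup{(\ref{easy metric 5})} yields
\[
        d(\pi(\mathbf a),\overline{\mathbf a})=|\pi(\mathbf a)^{-1}\overline{\mathbf a}|=|\epsilon(\mathbf a)|\leq (j-1)K.
\]

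There is no genuine obstacle. The only point requiring care is bookkeeping: the new correction factor \(\epsilon(a_1,\pi(\mathbf a'))\) is multiplied on the left of \(\epsilon(\mathbf a')\), which matches the order in the recursive definition, so the identity holds without any commutativity assumption.
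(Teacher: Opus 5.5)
Your proposal is correct and follows the paper's proof essentially verbatim: induction on \(j\) via the recursive definition, the same chain of equalities \(\overline{\mathbf a}=a_1\pi(\mathbf a')\epsilon(\mathbf a')=\pi(\mathbf a)\epsilon(\mathbf a)\), and the same derivation of the length and distance bounds from \(\pi(\mathbf a)^{-1}\overline{\mathbf a}=\epsilon(\mathbf a)\). You also check explicitly that \(\pi(a_1,\pi(\mathbf a'))\) is defined because \(\pi(\mathbf a')\in A\), and that the convention \(X^{0}=\{1\}\) handles the base case; the paper leaves both points implicit.
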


\begin{proof}
The proof is by induction on \(j\).  The case \(j=1\) is immediate.  Let
\(\mathbf a=(a_1,\ldots,a_j)\) and \(\mathbf a'=(a_2,\ldots,a_j)\).  By induction,
\[
        \overline{\mathbf a'}=\pi(\mathbf a')\epsilon(\mathbf a'),
        \qquad
        \epsilon(\mathbf a')\in X^{j-2}.
\]
Therefore
\[
\begin{aligned}
        \overline{\mathbf a}
        &=a_1\overline{\mathbf a'} \\
        &=a_1\pi(\mathbf a')\epsilon(\mathbf a') \\
        &=\pi(a_1,\pi(\mathbf a'))
          \epsilon(a_1,\pi(\mathbf a'))
          \epsilon(\mathbf a') \\
        &=\pi(\mathbf a)\epsilon(\mathbf a).
\end{aligned}
\]
Since \(\epsilon(a_1,\pi(\mathbf a'))\in X\) and
\(\epsilon(\mathbf a')\in X^{j-2}\), we get
\[
        \epsilon(\mathbf a)\in X^{j-1}.
\]
The length bound follows from the definition of \(K\), and the distance estimate
follows from
\[
        \pi(\mathbf a)^{-1}\overline{\mathbf a}=\epsilon(\mathbf a).
\]
\end{proof}

\section{Proof of Theorem ~\ref{thm:hyp dichotomy}}
\label{sec:growth-ag}
\subsection{Strategy for the proof of Theorem~\ref{thm:hyp dichotomy}}
\label{subsec:strategy}
Let \(A\) be an infinite approximate semigroup in a hyperbolic group \(G\), and
suppose that \(\langle A\rangle\) is not virtually cyclic.  We want to prove that
\(A\) has positive exponential growth in the ambient word metric.

It is useful first to compare the argument with the usual ping-pong proof for
subgroups of hyperbolic groups.  If \(H\leq G\) is a non-elementary subgroup, then
one can find independent loxodromic elements and, after taking large powers, apply
ping-pong on \(\partial G\) to produce a free subgroup of rank two inside \(H\).  This
immediately gives exponential growth of \(H\).

For an approximate semigroup this is not enough.  Even if elements of
\(\langle A\rangle\) play ping-pong on the boundary, the words produced by the
ping-pong argument need not lie in \(A\).  Moreover, powers of an element of \(A\), or
of an element of \(A^2\), need not themselves belong to \(A\).  The small-doubling
relation gives a different kind of control: Proposition~\ref{prop:pi} says that a true
product of entries of \(A\) can be replaced by an element of \(A\), but only up to an
error whose word length is linear in the number of factors.

Thus the usual qualitative conclusion of ping-pong, namely that many words are
distinct, has to be strengthened to a quantitative separation statement.  We need to
construct many words whose values in the Cayley graph are separated by more than
the possible correction error coming from Proposition~\ref{prop:pi}.  The proof below
is a quantitative Schottky-type argument: we find two finite tuples in \(A\), build a
binary family of long concatenations, and prove that the corresponding group elements
are linearly separated.  After applying the approximate product map \(\pi\), the
separation survives, giving exponentially many distinct elements of \(A\) in linearly
large balls.

The following lemma isolates the geometric part of the argument.  Notice that
it does not use the approximate semigroup structure, except for the harmless
normalization \(1\in A\), which we arrange before applying it.

\begin{lemma}[Main lemma for exponential growth]
\label{lem:main}
Let \(A\subseteq G\) be an infinite subset with \(1\in A\) such that
\(\langle A\rangle\) is not virtually cyclic.  For every constant \(K_0>0\), there exist tuples
\[
        \mathbf x_0,\mathbf x_1\in A^*
\]
such that the following holds.  For \(n\geq 1\), define tuple-valued maps
\[
        \mathbf h_n,\mathbf f_n:\{0,1\}^n\to A^*
\]
by
\[
        \mathbf h_n(b)=\mathbf x_{b_1}\circ\cdots\circ\mathbf x_{b_n},
\]
and
\[
        \mathbf f_n(b)=\mathbf h_n(b)\circ \mathbf x_0^{\circ n}.
\]
Then, for every \(b\neq b'\in\{0,1\}^n\),
\[
        d(\mathbf f_n(b),\mathbf f_n(b'))
        >
        K_0\max\{\ell(\mathbf x_0),\ell(\mathbf x_1)\}n.
\]
\end{lemma}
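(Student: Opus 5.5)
The plan has four steps: find a tuple in $A^*$ whose product has very large stable translation length compared with the length of the tuple, build a second, boundary-independent tuple from it, check that concatenations of the two are quasi-geodesic via Proposition~\ref{prop:hyp-piecewise}, and compare the resulting distances with $K_0\max\{\ell(\mathbf x_0),\ell(\mathbf x_1)\}n$.

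\emph{Step 1: large translation length per factor.} I will show that for every $T>0$ there is a tuple $\mathbf u\in A^{\times 1}\cup A^{\times 2}$ with $\tau(\overline{\mathbf u})>T$. Powering a single element does not help here, since $\tau(g^N)/N$ is constant. The argument is by contradiction: suppose $\tau$ is bounded by $T$ on $A\cup A^2$.

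\begin{itemize}
\item First I bound $(a^{-1}\mid a)$ from below. If $|a|$ is large and $(a^{-1}\mid a)\le M$, then Proposition~\ref{prop:hyp-piecewise} applied to $(a,\dots,a)$ gives $|a^n|\ge n(|a|-2M-2\delta)$. Hence $\tau(a)\ge |a|-2M-2\delta$, so bounded $\tau$ forces $(a^{-1}\mid a)\ge |a|/2-O(T+\delta)$.
\item Next, $A$ is infinite, so by Proposition~\ref{prop:subsequence} I take a Gromov sequence $a_i\in A$ with $a_i\to\xi$. By the first point, $a_i^{-1}\to\xi$ as well.
\item The same computation applies to products $ab$ with $a,b\in A$. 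By Proposition~\ref{prop:easy metric}(\ref{easy metric 2}), $|ab|=|a|+|b|-2(a^{-1}\mid b)$, and bounded $\tau(ab)$ then forces the long elements of $A$ to be aligned with $\xi$ in the same way. Combined with Proposition~\ref{prop:hyp-right-mult-limit}, this should show that every $c\in A$ satisfies $c\xi=\xi$, i.e.\ $c a_i\to\xi$.
\item Then $\langle A\rangle\le\operatorname{Stab}_G(\xi)$, which is elementary by Lemma~\ref{lem:hyp-elementary-facts}. Since $A$ is infinite, $\langle A\rangle$ would be virtually cyclic, a contradiction.
\end{itemize}

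I expect this boundary argument to be the main obstacle. In particular the bookkeeping of Gromov products, showing that $c$ must fix $\xi$ rather than merely move it a bounded amount, needs care, and Proposition~\ref{prop:easy hyp} will be used repeatedly.

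\emph{Step 2: a second, independent tuple.} Fix $\mathbf u$ from Step 1 with $\tau(g)>8K_0$, where $g=\overline{\mathbf u}$. By Proposition~\ref{prop:loxodromic}, $g$ is loxodromic. Not every element of $A$ can preserve $\{g^+,g^-\}$, since otherwise $\langle A\rangle$ would be elementary by Lemma~\ref{lem:hyp-elementary-facts}(2). Choose $c\in A$ with $c\{g^+,g^-\}\neq\{g^+,g^-\}$. For $N$ to be fixed later, set
\[
\mathbf x_0=\mathbf u^{\circ N},\qquad \mathbf x_1=(c)\circ\mathbf u^{\circ N}.
\]
Then $\ell(\mathbf x_i)\le 2N+1$, and $|\overline{\mathbf x_i}|\ge N\tau(g)-|c|$ by Proposition~\ref{prop:tau}.

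\emph{Step 3: bounded cancellation at the junctions.} Using Lemma~\ref{cor:hyp-boundary-separation}, applied to the non-asymptotic pairs $g^{-k}\to g^-$ versus $g^k\to g^+$, and $g^{\pm k}$ versus $cg^{k}$, $c^{-1}g^{-k}$ (here Lemma~\ref{lem:lox} is used to rule out coincidences), I get a constant $M$ depending only on $g$ and $c$, not on $N$. It bounds every Gromov product of the form $(\overline{\mathbf x_i}^{\,-1}\mid \overline{\mathbf x_j})$, and also the mixed junction $(\overline{\mathbf x_0}\mid\overline{\mathbf x_1})$, up to $O(|c|+\delta)$.

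\emph{Step 4: separation.} For $b\neq b'$, let $i$ be the first index where they differ. Cancelling the common prefix, $d(\mathbf f_n(b),\mathbf f_n(b'))$ is the length of a product of the form
\[
\overline{\mathbf x_0}^{\,-1}\cdots\overline{\mathbf x_{b_i}}^{\,-1}\;\overline{\mathbf x_{b'_i}}\cdots\overline{\mathbf x_0}.
\]
This product has at least $2n$ pieces because of the appended $\mathbf x_0^{\circ n}$. Every junction has Gromov product at most $M'=M+O(|c|+\delta)$. Taking $N$ large so that $|\overline{\mathbf x_i}|>2M'+2\delta$, Proposition~\ref{prop:hyp-piecewise} gives
\[
d(\mathbf f_n(b),\mathbf f_n(b'))\ \ge\ 2n\bigl(N\tau(g)-|c|-2M'-2\delta\bigr).
\]
The loss per piece is independent of $N$, while $\tau(g)>8K_0$. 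Hence for $N$ large the right-hand side exceeds $K_0(2N+1)n\ge K_0\max\{\ell(\mathbf x_0),\ell(\mathbf x_1)\}n$. The order of choices, $g$ first, then $c$ and $M$, then $N$, avoids any circularity.
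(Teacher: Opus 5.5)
Your proposal is correct and follows the paper's proof essentially step for step. Step~1 is Lemma~\ref{lem:hyp-unbounded-A2}, with your first bullet being the computation behind Lemma~\ref{lem:hyp-transverse-stable}(1), and Steps~2--4 are the paper's construction from \(g\in A^2\) with large \(\tau\) and an element of \(A\) moving \(\{g^+,g^-\}\), using a uniform junction bound from Lemma~\ref{cor:hyp-boundary-separation} and Proposition~\ref{prop:hyp-piecewise}. The step you flag as the main obstacle is in fact immediate: apply your first bullet to \(h_i=ca_i\in A^2\), so \((h_i^{-1}\mid h_i)\to\infty\) while \(h_i\to c\xi\) and \(h_i^{-1}=a_i^{-1}c^{-1}\to\xi\), which forces \(c\xi=\xi\) (and makes the paper's separate case of two non-asymptotic Gromov sequences unnecessary).
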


We first deduce Theorem~\ref{thm:hyp dichotomy} from the lemma.

\begin{proof}[Proof of Theorem~\ref{thm:hyp dichotomy} assuming Lemma~\ref{lem:main}]
Let \(A\) be an infinite approximate semigroup with correction set \(X\), and assume
that \(\langle A\rangle\) is not virtually cyclic.  Enlarge \(X\), if necessary, so that
\(1\in X\), and replace \(A\) by \(A\cup\{1\}\).  This does not affect the conclusion.
Set
\[
        K=\max_{x\in X}|x|.
\]
Apply Lemma~\ref{lem:main} with \(K_0=4(K+1)\), and obtain tuples
\[
        \mathbf x_0,\mathbf x_1\in A^*.
\]
We show that \(A\) has positive exponential growth.

For \(b\in\{0,1\}^n\), the tuple \(\mathbf f_n(b)\) is a concatenation of \(2n\)
tuples, each equal to either \(\mathbf x_0\) or \(\mathbf x_1\).  Hence
\[
        \ell(\mathbf f_n(b))
        \leq
        2\max\{\ell(\mathbf x_0),\ell(\mathbf x_1)\}n,
\]
and
\[
        |\mathbf f_n(b)|
        \leq
        2\max\{|\mathbf x_0|,|\mathbf x_1|\}n.
\]
By Proposition~\ref{prop:pi},
\[
        \overline{\mathbf f_n(b)}
        =
        \pi(\mathbf f_n(b))\epsilon(\mathbf f_n(b)),
\]
with
\[
        |\epsilon(\mathbf f_n(b))|
        \leq
        K\ell(\mathbf f_n(b))
        \leq
        C_1n,
\]
where
\[
        C_1=2K\max\{\ell(\mathbf x_0),\ell(\mathbf x_1)\}.
\]
Thus
\[
        d(\pi(\mathbf f_n(b)),\overline{\mathbf f_n(b)})\leq C_1n.
\]

If \(b\neq b'\), then Lemma~\ref{lem:main} gives
\[
        d(\mathbf f_n(b),\mathbf f_n(b'))
        >
        4(K+1)\max\{\ell(\mathbf x_0),\ell(\mathbf x_1)\}n
        >
        2C_1n.
\]
Therefore
\[
\begin{aligned}
        d(\pi(\mathbf f_n(b)),\pi(\mathbf f_n(b')))
        &\geq
        d(\mathbf f_n(b),\mathbf f_n(b'))  \\
        &\quad
        -d(\pi(\mathbf f_n(b)),\overline{\mathbf f_n(b)})
        -d(\pi(\mathbf f_n(b')),\overline{\mathbf f_n(b')}) \\
        &>
        2C_1n-C_1n-C_1n
        =0.
\end{aligned}
\]
Hence
\[
        \pi(\mathbf f_n(b))\neq \pi(\mathbf f_n(b')).
\]
So
\[
        \{\pi(\mathbf f_n(b)):b\in\{0,1\}^n\}
\]
is a subset of \(A\) of cardinality \(2^n\).

Finally,
\[
\begin{aligned}
        |\pi(\mathbf f_n(b))|
        &\leq
        |\mathbf f_n(b)|+|\epsilon(\mathbf f_n(b))| \\
        &\leq
        2\max\{|\mathbf x_0|,|\mathbf x_1|\}n+C_1n
        =C_0n,
\end{aligned}
\]
where
\[
        C_0=2\max\{|\mathbf x_0|,|\mathbf x_1|\}+C_1.
\]
Thus
\[
        |A\cap B_{C_0n}|\geq 2^n
\]
for all \(n\), and \(A\) has positive exponential growth.
\end{proof}

It remains to prove Lemma~\ref{lem:main}.  We need to construct two tuples
\(\mathbf x_0,\mathbf x_1\in A^*\) such that the elements represented by the tuples
\(\mathbf f_n(b)\) are far apart whenever \(b\neq b'\).  The point of the additional
right tail \(\mathbf x_0^{\circ n}\) in the definition of \(\mathbf f_n\) is to make the
separation estimate stable after the common initial part of \(b\) and \(b'\) cancels.


Suppose for the moment that \(\mathbf x_0,\mathbf x_1\) have been chosen, and write
\[
        y_0=\overline{\mathbf x_0},
        \qquad
        y_1=\overline{\mathbf x_1},
\]
and define $\mathbf q_n(b)$ to be the tuple $(y_{b_1},\cdots y_{b_n}) \circ y_0^{\circ n}$.  Then:

\[
\overline{\mathbf f_n(b)}^{-1}\overline{\mathbf f_n(b')}=\overline{\mathbf q_n(b)}^{-1}\overline{\mathbf q_n(b')}
\]
If \(b\neq b'\), then after cancelling the common initial segment of the two binary
words, every consecutive pair that occurs in
\[
        \overline{\mathbf q_n(b)}^{-1}\overline{\mathbf q_n(b')}
\]
is one of
\begin{equation}
\label{eqn:pair list}
\begin{gathered}
(y_0^{-1},y_1^{-1}),\quad
(y_0^{-1},y_0^{-1}),\quad
(y_1^{-1},y_0^{-1}),\quad
(y_1^{-1},y_1^{-1}),\\
(y_0^{-1},y_1),\quad
(y_1^{-1},y_0),\\
(y_0,y_1),\quad
(y_0,y_0),\quad
(y_1,y_0),\quad
(y_1,y_1).
\end{gathered}
\end{equation}
Denote this finite set of ordered pairs by \(P(y_0,y_1)\).  To apply
Proposition~\ref{prop:hyp-piecewise}, it will be enough to arrange that \(|y_0|\) and
\(|y_1|\) are large and that
\[
        (u^{-1}\mid v)
\]
is uniformly bounded for every \((u,v)\in P(y_0,y_1)\).

We obtain such tuples from elements of \(A^2\) with large stable translation length.

\subsection{Elements of \texorpdfstring{$A^2$}{A²} with large stable translation length}
\label{subsec:stable length}
For a sequence \(z=(z_n)\) in \(G\), write
\[
        z^\dagger=(z_n^{-1}).
\]
We call \(z\) \emph{bi-Gromov} if both \(z\) and \(z^\dagger\) are Gromov sequences.
By applying Proposition~\ref{prop:subsequence} twice, every sequence \((z_n)\) with
\(|z_n|\to\infty\) along a subsequence has a bi-Gromov subsequence.

\begin{lemma}[Stable length from transverse limits]
\label{lem:hyp-transverse-stable}
The following hold.
\begin{enumerate}[label=\textup{(\arabic*)}]
    \item Let \(z=(z_n)\) be bi-Gromov.  If \(z\) and \(z^\dagger\) are not asymptotic,
    then
    \[
        \tau(z_n)\to\infty.
    \]
    \item Let \(x=(x_n)\) and \(y=(y_n)\) be bi-Gromov sequences.  Assume that
    \(x\) is asymptotic to \(x^\dagger\), and that \(y\) is asymptotic to
    \(y^\dagger\).  If \(x\) and \(y\) are not asymptotic, then
    \[
        \tau(x_ny_n)\to\infty.
    \]
\end{enumerate}
\end{lemma}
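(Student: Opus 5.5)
Both parts rest on one estimate: if an element \(g\) has a small Gromov product between \(g^{-1}\) and \(g\) relative to its length, then its powers form a quasi-geodesic. Concretely, we apply Proposition~\ref{prop:hyp-piecewise} to the constant tuple \((g,g,\ldots,g)\). If \((g^{-1}\mid g)\leq M\) and \(|g|>2M+2\delta\), then
\[
        |g^m|\geq m|g|-2(M+\delta)(m-1),
\]
and dividing by \(m\) and letting \(m\to\infty\) gives
\[
        \tau(g)\geq |g|-2(M+\delta).
\]
So it suffices, in each part, to exhibit a uniform \(M\) such that the relevant elements \(g_n\) satisfy \((g_n^{-1}\mid g_n)\leq M\) for all large \(n\), and such that \(|g_n|\to\infty\).

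For part (1), set \(g_n=z_n\). Since \(z\) is Gromov and not a bounded sequence, we have \(|z_n|\to\infty\). By hypothesis the Gromov sequences \(z^\dagger=(z_n^{-1})\) and \(z=(z_n)\) are not asymptotic, so Lemma~\ref{cor:hyp-boundary-separation} gives \(M\) with \((z_i^{-1}\mid z_j)\leq M\) for all \(i,j\). Taking \(i=j=n\) yields \((z_n^{-1}\mid z_n)\leq M\), and the estimate above gives
\[
        \tau(z_n)\geq |z_n|-2(M+\delta)\to\infty.
\]

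For part (2), set \(g_n=x_ny_n\). The task is to show that \((y_n^{-1}x_n^{-1}\mid x_ny_n)\) stays bounded and that \(|x_ny_n|\to\infty\). First, Lemma~\ref{cor:hyp-boundary-separation} gives \(M_0\) with \((x_i\mid y_j)\leq M_0\). Since \(x\sim x^\dagger\) and \(y\sim y^\dagger\), and since asymptoticity is an equivalence relation, the sequence \(x^\dagger\) is not asymptotic to \(y\), \(x\) is not asymptotic to \(y^\dagger\), and \(x^\dagger\) is not asymptotic to \(y^\dagger\). So after enlarging \(M_0\) all of these Gromov products are bounded by \(M_0\). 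In particular \((x_n^{-1}\mid y_n)\leq M_0\), so by Proposition~\ref{prop:easy metric}\textup{(\ref{easy metric 2})},
\[
        |x_ny_n|\geq |x_n|+|y_n|-2M_0\to\infty.
\]
Next I locate \(x_ny_n\) and its inverse near boundary points. By Proposition~\ref{prop:easy metric}\textup{(\ref{easy metric 4})},
\[
        (x_ny_n\mid x_n)=|x_n|-(x_n^{-1}\mid y_n)\geq |x_n|-M_0\to\infty.
\]
Symmetrically, \((y_n^{-1}x_n^{-1}\mid y_n^{-1})\geq |y_n|-M_0\to\infty\). So \(x_ny_n\) shadows \(x_n\), which tends to the class \(\xi\) of \(x\) and \(x^\dagger\). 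Likewise \((x_ny_n)^{-1}\) shadows \(y_n^{-1}\), which tends to the class \(\eta\) of \(y\) and \(y^\dagger\), and \(\xi\neq\eta\).

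The step I expect to be the main obstacle is converting this into the uniform bound on \((y_n^{-1}x_n^{-1}\mid x_ny_n)\). I would apply Proposition~\ref{prop:easy hyp}\textup{(\ref{easy hyp 2})}, or argue by contradiction: suppose along a subsequence
\[
        (y_n^{-1}x_n^{-1}\mid x_ny_n)\to\infty.
\]
Combining this with the two divergent products above via Proposition~\ref{prop:easy hyp}\textup{(\ref{easy hyp 1})} gives
\[
        (y_n^{-1}\mid x_n)\geq \min\{(y_n^{-1}\mid y_n^{-1}x_n^{-1}),\,(y_n^{-1}x_n^{-1}\mid x_ny_n),\,(x_ny_n\mid x_n)\}-2\delta\to\infty,
\]
which contradicts \((y_i^{-1}\mid x_j)\leq M_0\). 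Hence the products are bounded by some \(M\) for large \(n\), and the preliminary estimate gives \(\tau(x_ny_n)\to\infty\). The delicate point is the bookkeeping: the bound must be taken along the diagonal \(i=j=n\), which is why the separation must hold for all pairs \(i,j\), as Lemma~\ref{cor:hyp-boundary-separation} provides.
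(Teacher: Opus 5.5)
Your proof is correct. Part (1) is exactly the paper's argument. One wording fix: \(|z_n|\to\infty\) holds because \((z_n\mid z_n)=|z_n|\) and the \(\liminf\) in the Gromov condition includes the diagonal; mere unboundedness would not give it.

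\textbf{Part (2).} Here you take a genuinely different route.

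The paper applies Proposition~\ref{prop:hyp-piecewise} directly to the alternating \(2q\)-tuple \((x_n,y_n)^{\circ q}\). The only junctions there are \((x_n,y_n)\) and \((y_n,x_n)\). So the two bounds \((x_n^{-1}\mid y_n)\le M\) and \((y_n^{-1}\mid x_n)\le M\) from boundary separation are all that is needed. This gives \(\tau(x_ny_n)\ge |x_n|+|y_n|-4(M+\delta)\) at once, with no information about \((g_n^{-1}\mid g_n)\) required.

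You instead reduce to the constant-tuple estimate of part (1) for \(g_n=x_ny_n\). That forces you first to bound \((g_n^{-1}\mid g_n)\). Your four-point argument does this correctly, and it need not go by contradiction. For large \(n\), the outer terms \((y_n^{-1}\mid g_n^{-1})\ge |y_n|-M_0\) and \((g_n\mid x_n)\ge |x_n|-M_0\) both exceed \(M_0+2\delta\). The inequality \(M_0\ge (y_n^{-1}\mid x_n)\ge\min\{\cdot,\cdot,\cdot\}-2\delta\) then forces \((g_n^{-1}\mid g_n)\le M_0+2\delta\).

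\textbf{Comparison.} Your route costs an extra hyperbolicity step and a final constant worse by \(2\delta\). In return it shows something conceptually nice: \(x_ny_n\) is itself transverse. That is, \(g_n\) and \(g_n^{-1}\) shadow \(x_n\) and \(y_n^{-1}\), which converge to distinct boundary points, so part (2) is really an instance of part (1). The paper's route is shorter because Proposition~\ref{prop:hyp-piecewise} already handles non-constant tuples.

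Your bounds on \((x_i\mid y_j)\) and \((x_i^{-1}\mid y_j^{-1})\) are never used and can be dropped.
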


\begin{proof}
For \textup{(1)}, Corollary~\ref{cor:hyp-boundary-separation} gives a constant
\(M\) such that
\[
        (z_n^{-1}\mid z_n)\leq M
\]
for all \(n\).  For \(n\) with \(|z_n|>2M+2\delta\), Proposition~\ref{prop:hyp-piecewise}, applied to the tuple \((z_n)^{\circ q}\), gives
\[
        |z_n^q|
        \geq
        q|z_n|-2(M+\delta)(q-1).
\]
Dividing by \(q\) and letting \(q\to\infty\), we obtain
\[
        \tau(z_n)\geq |z_n|-2(M+\delta),
\]
which tends to infinity.

For \textup{(2)}, the sequences \(x^\dagger\) and \(y\) are not asymptotic; otherwise
\(x\) and \(y\) would be asymptotic.  Similarly, \(y^\dagger\) and \(x\) are not
asymptotic.  Hence Corollary~\ref{cor:hyp-boundary-separation} gives a constant
\(M\) such that, for all sufficiently large \(n\),
\[
        (x_n^{-1}\mid y_n)\leq M,
        \qquad
        (y_n^{-1}\mid x_n)\leq M.
\]
For large \(n\), also \(\min\{|x_n|,|y_n|\}>2M+2\delta\).  Applying
Proposition~\ref{prop:hyp-piecewise} to the \(2q\)-tuple \((x_n,y_n)^{\circ q}\), we get
\[
        |(x_ny_n)^q|
        \geq
        q(|x_n|+|y_n|)-2(M+\delta)(2q-1).
\]
After division by \(q\) and passage to the limit \(q\to\infty\),
\[
        \tau(x_ny_n)
        \geq
        |x_n|+|y_n|-4(M+\delta),
\]
which tends to infinity.
\end{proof}

\begin{lemma}[Unbounded stable length in \(A^2\)]
\label{lem:hyp-unbounded-A2}
Let \(A\subseteq G\) be infinite, assume \(1\in A\), and suppose that \(\langle A\rangle\)
is non-elementary.  Then
\[
        \sup_{h\in A^2}\tau(h)=\infty.
\]
\end{lemma}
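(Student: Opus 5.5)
We argue by contradiction: suppose \(\tau\) is bounded on \(A^2\). Since \(1\in A\), we have \(A\subseteq A^2\), so \(\tau\) is also bounded on \(A\). The plan is to use Lemma~\ref{lem:hyp-transverse-stable} to show that all of \(A\) fixes a common boundary point, and then conclude from Lemma~\ref{lem:hyp-elementary-facts} that \(\langle A\rangle\) is elementary.

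\emph{Step 1: every unbounded sequence in \(A\) has matching endpoints.} Let \((a_n)\) be a bi-Gromov sequence in \(A\) with \(|a_n|\to\infty\). Such sequences exist because \(A\) is infinite, by Proposition~\ref{prop:subsequence}. If \((a_n)\) and \((a_n^{-1})\) were not asymptotic, Lemma~\ref{lem:hyp-transverse-stable}(1) would give \(\tau(a_n)\to\infty\), which is a contradiction. Hence every such sequence satisfies \(a_n\to\xi\) and \(a_n^{-1}\to\xi\) for some single point \(\xi\in\partial G\).

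\emph{Step 2: this point is the same for all sequences.} Take two such sequences \((a_n)\to\xi\) and \((b_n)\to\eta\). Their termwise products \(a_nb_n\) lie in \(A^2\). If \(\xi\neq\eta\), Lemma~\ref{lem:hyp-transverse-stable}(2) gives \(\tau(a_nb_n)\to\infty\), which is again a contradiction. So there is one point \(\xi\) such that every unbounded sequence in \(A\) converges to \(\xi\), and so do the inverses.

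\emph{Step 3: every \(g\in A\) fixes \(\xi\).} Choose an unbounded sequence \((a_n)\) in \(A\) with \(a_n\to\xi\). The elements \(ga_n\) lie in \(A^2\).

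\begin{itemize}
\item[(a)] Suppose \(g\) has infinite order. Since \(\tau(g)>0\), the powers \(g^n\) do not lie in \(A\), so the previous steps cannot be applied to them directly. Instead, consider \(ga_n\). By left-invariance we have \(ga_n\to g\xi\). By Proposition~\ref{prop:hyp-right-mult-limit}, the inverses satisfy \(a_n^{-1}g^{-1}\to\xi\). Pass to a bi-Gromov subsequence of \((ga_n)\). If \(g\xi\neq\xi\), then this sequence and its inverse sequence are not asymptotic, and Lemma~\ref{lem:hyp-transverse-stable}(1) gives \(\tau(ga_n)\to\infty\), contradicting boundedness of \(\tau\) on \(A^2\). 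Hence \(g\xi=\xi\).
\item[(b)] Suppose \(g\) has finite order. The same argument applies verbatim, since it used only \(g\in A\) and never the order of \(g\).
\end{itemize}

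Thus \(A\subseteq\operatorname{Stab}_G(\xi)\), and therefore \(\langle A\rangle\subseteq\operatorname{Stab}_G(\xi)\). By Lemma~\ref{lem:hyp-elementary-facts}(1), this stabilizer is elementary, so \(\langle A\rangle\) is elementary. This contradicts the hypothesis that \(\langle A\rangle\) is non-elementary.

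The step I expect to be the main obstacle is Step 3, specifically the claim that \((ga_n)\) and \((a_n^{-1}g^{-1})\) are not asymptotic when \(g\xi\neq\xi\). This requires that \(g\xi\) and \(\xi\) be distinct boundary points, which is exactly the assumption, and that Gromov limits transform correctly under left multiplication, which is the standard boundary action. If Lemma~\ref{lem:hyp-transverse-stable}(1) needs \(|ga_n|\to\infty\), that holds automatically, since \(|ga_n|\geq |a_n|-|g|\).
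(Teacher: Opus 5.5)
Your proof is correct and follows the paper's argument essentially step for step. It uses the same three stages: a bi-Gromov sequence in \(A\) transverse to its inverse (Lemma~\ref{lem:hyp-transverse-stable}(1)), two non-asymptotic limits (Lemma~\ref{lem:hyp-transverse-stable}(2)), and a common limit \(\xi\) for which \(ga_n\in A^2\) forces \(g\xi=\xi\). The only difference is packaging: you run one contradiction, whereas the paper picks \(a\in A\setminus\{a:a\alpha=\alpha\}\). The case split in Step~3 by the order of \(g\) is unnecessary; your aside there that the powers \(g^n\) lie outside \(A\) should say ``all but finitely many,'' but this does not affect the argument.
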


\begin{proof}
First suppose that there is a bi-Gromov sequence \(a=(a_n)\), with \(a_n\in A\),
such that \(a\) and \(a^\dagger\) are not asymptotic.  By
Lemma~\ref{lem:hyp-transverse-stable},
\[
        \tau(a_n)\to\infty.
\]
Since \(1\in A\), we have \(A\subseteq A^2\), and the conclusion follows.

We may therefore assume that every bi-Gromov sequence in \(A\) is asymptotic to
its inverse sequence.

Suppose next that there are two Gromov sequences in \(A\), say
\[
        a=(a_n),
        \qquad
        b=(b_n),
\]
which are not asymptotic.  Passing to subsequences, we may assume that both are
bi-Gromov.  By the previous paragraph, \(a\) is asymptotic to \(a^\dagger\), and
\(b\) is asymptotic to \(b^\dagger\).  Since \(a\) and \(b\) are not asymptotic,
Lemma~\ref{lem:hyp-transverse-stable} gives
\[
        \tau(a_nb_n)\to\infty.
\]
Since \(a_nb_n\in A^2\), the conclusion follows.

We are left with the case in which all Gromov sequences in \(A\) are asymptotic.
Let \(\alpha\in\partial G\) be their common limit.  Define
\[
        B=\{a\in A:a\alpha=\alpha\}.
\]
Then
\[
        \langle B\rangle\leq \operatorname{Stab}_G(\alpha),
\]
so \(\langle B\rangle\) is elementary by Lemma~\ref{lem:hyp-elementary-facts}\textup{(1)}.
Since \(\langle A\rangle\) is non-elementary, \(A\neq B\).  Choose
\[
        a\in A\setminus B.
\]
Since \(A\) is infinite, it is unbounded.  Thus, by Proposition~\ref{prop:subsequence},
there is a bi-Gromov sequence \(t=(t_n)\) in \(A\).  By the assumptions of the current
case, both \(t\) and \(t^\dagger\) converge to \(\alpha\).  Set
\[
        r_n=at_n\in A^2.
\]
Then
\[
        r_n\to a\alpha\neq \alpha.
\]
On the other hand,
\[
        r_n^{-1}=t_n^{-1}a^{-1}.
\]
Since \(t_n^{-1}\to\alpha\), Proposition~\ref{prop:hyp-right-mult-limit} gives
\[
        r_n^{-1}\to\alpha.
\]
Thus \((r_n)\) is bi-Gromov and is not asymptotic to its inverse sequence.  By
Lemma~\ref{lem:hyp-transverse-stable},
\[
        \tau(r_n)\to\infty.
\]
Since \(r_n\in A^2\), this proves the lemma.
\end{proof}

\subsection{Proof of Lemma~\ref{lem:main}}
\begin{proof}
Since \(A\) is infinite, \(1\in A\), and \(\langle A\rangle\) is not virtually cyclic,
Lemma~\ref{lem:hyp-unbounded-A2} implies that \(\tau\) is unbounded on \(A^2\).
Choose \(g\in A^2\) such that
\begin{equation}
\label{eq:choose-g-large-tau}
        \tau(g)>3K_0,
\end{equation}
and write
\[
        g=g_1g_2
\]
with \(g_1,g_2\in A\).

Since \(\tau(g)>0\), the element \(g\) is loxodromic.  If every element of \(A\)
preserved the two-point set \(\{g^+,g^-\}\), then \(\langle A\rangle\) would preserve
this set, hence would be elementary by Lemma~\ref{lem:hyp-elementary-facts}\textup{(2)}.
This is impossible.  Therefore there exists \(a\in A\) such that
\begin{equation}
\label{eq:a-does-not-preserve-pair}
        a\{g^+,g^-\}\neq \{g^+,g^-\}.
\end{equation}
The set \(a\{g^+,g^-\}\) is the fixed-point set of the conjugate \(aga^{-1}\).  By
Lemma~\ref{lem:lox}\textup{(\ref{shared-fixed-point})}, \eqref{eq:a-does-not-preserve-pair} implies
\begin{equation}
\label{eq:agplus-not-endpoints}
        a\{g^+,g^-\}\cap \{g^+,g^-\}=\emptyset.
\end{equation}

For \(N\geq 1\), define tuples
\[
        \mathbf u_N=(g_1,g_2)^{\circ N},
        \qquad
        \mathbf v_N=(a)\circ(g_1,g_2)^{\circ N}.
\]
Then
\[
        \ell(\mathbf u_N)=2N,
        \qquad
        \ell(\mathbf v_N)=2N+1.
\]
Set
\begin{equation}
\label{eqn:yN and zN}
        y_N=\overline{\mathbf u_N}=g^N,
        \qquad
        z_N=\overline{\mathbf v_N}=ag^N.
\end{equation}
By Proposition~\ref{prop:tau},
\[
        |y_N|\geq N\tau(g),
        \qquad
        |z_N|\geq N\tau(g)-|a|.
\]
Moreover,
\[
        y_N\to g^+,
        \qquad
        y_N^{-1}\to g^-,
\]
and
\[
        z_N\to ag^+.
\]
Also
\[
        z_N^{-1}=g^{-N}a^{-1}\to g^-
\]
by Proposition~\ref{prop:hyp-right-mult-limit}.  By \eqref{eq:agplus-not-endpoints}, among the four sequences
\[
        (y_N),\quad (y_N^{-1}),\quad (z_N),\quad (z_N^{-1}),
\]
the only asymptotic pair is \((y_N^{-1})\) and \((z_N^{-1})\).

We claim that there is a constant \(M_0\geq 0\) such that, for every \(N\) and every
\((s_N,t_N)\in P(y_N,z_N)\),
\[
        (s_N^{-1}\mid t_N)\leq M_0.
\]

Indeed, since $(y_N,z_N^{-1}),(z_n,y_N^{-1}) \not\in P(y_N,Z)n)$,  for each $((s_N,t_N)\in P(y_N,z_N)\),  the two sequences
\((s_N^{-1})\) and \((t_N)\) are not asymptotic. The existence of a uniform
\(M_0\) follows from
Corollary~\ref{cor:hyp-boundary-separation}, since \(P(y_N,z_N)\) has only finitely
many pairs.

Choose an integer \(m\ge 1\) such that
\[
        m\tau(g)>4M_0+4\delta+2|a|.
\]
Then
\[
        m\tau(g)-|a|-2(M_0+\delta)>\frac12 m\tau(g).
\]
In particular, \(|y_m|\) and \(|z_m|\) are both larger than \(2M_0+2\delta\).

Now define
\[
        \mathbf x_0=\mathbf u_m,
        \qquad
        \mathbf x_1=\mathbf v_m.
\]
We claim that these tuples satisfy Lemma~\ref{lem:main}.

Let \(b\neq b'\in\{0,1\}^n\).  Let \(p\) be their longest common prefix, and write
\[
        b=pc,
        \qquad
        b'=pc',
\]
where \(c\) and \(c'\) have the same length \(t\), and their first letters are different.
For a binary word \(c\), let \(\mathbf q(c)\) be the tuple over \(G\) obtained by
replacing each \(0\) by \(y_m\) and each \(1\) by \(z_m\).  After the common prefix
has cancelled, we get
\[
        \overline{\mathbf f_n(b)}^{-1}\overline{\mathbf f_n(b')}
        =
        \overline{\mathbf r},
\]
where
\[
        \mathbf r
        =
        (y_m^{-1})^{\circ n}
        \circ
        \mathbf q(c)^{-1}
        \circ
        \mathbf q(c')
        \circ
        (y_m)^{\circ n}.
\]
The tuple \(\mathbf r\) has length \(2n+2t\).  Every entry is one of
\[
        y_m,
        \quad y_m^{-1},
        \quad z_m,
        \quad z_m^{-1},
\]
and each has length at least \(m\tau(g)-|a|\).  Every consecutive pair appearing in
\(\mathbf r\) belongs to \(P(y_m,z_m)\), so for each consecutive pair \((u,v)\) in
\(\mathbf r\),
\[
        (u^{-1}\mid v)\leq M_0.
\]
By Proposition~\ref{prop:hyp-piecewise},
\[
\begin{aligned}
        |\mathbf r|
        &\geq
        (2n+2t)(m\tau(g)-|a|)-2(2n+2t-1)(M_0+\delta) \\
        &\geq
        (2n+2t)\bigl(m\tau(g)-|a|-2(M_0+\delta)\bigr).
\end{aligned}
\]
By the choice of \(m\),
\[
        m\tau(g)-|a|-2(M_0+\delta)>\frac12m\tau(g).
\]
Therefore
\[
        |\mathbf r|
        >
        (n+t)m\tau(g)
        \geq
        nm\tau(g)
        >
        3K_0mn.
\]
Since \(m\geq 1\),
\[
        3K_0mn
        \geq
        K_0(2m+1)n
        =
        K_0\max\{\ell(\mathbf x_0),\ell(\mathbf x_1)\}n.
\]
Finally,
\[
        d(\mathbf f_n(b),\mathbf f_n(b'))
        =
        |\overline{\mathbf f_n(b)}^{-1}\overline{\mathbf f_n(b')}|
        =
        |\mathbf r|
        >
        K_0\max\{\ell(\mathbf x_0),\ell(\mathbf x_1)\}n.
\]
This proves Lemma~\ref{lem:main}, and hence Theorem~\ref{thm:hyp dichotomy}.
\end{proof}

\section{Well-defined growth rates of approximate groups}\label{sec:growth-rates}
In this section we prove the general growth-rate criterion used in the introduction. 
The point is that a polynomial lower bound for ordinary product sets gives the
almost-superadditivity needed for the counting function of an approximate semigroup.
More precisely, we show that if the ambient finitely generated group satisfies large
product growth, then every approximate semigroup \(A\subseteq G\) has a well-defined
exponential growth rate,
\[
        \lim_{n\to\infty}\frac{1}{n}\log |A\cap B_n|.
\]
The proof is based on a shifted almost-superadditivity estimate, reduced to the
classical theorem of de Bruijn and Erd\H{o}s.

\begin{definition}[Product shrinkage]\label{dfn:product-shrinkage}
Let \(G\) be a finitely generated group with a fixed finite generating set, and let
\(B_n\) denote the ball of radius \(n\) around the identity.  Let
\(f:\mathbb N\to [1,\infty)\) be a non-decreasing function.  We say that \(G\) has
\emph{product shrinkage at most \(f\)} if, for all finite subsets
\(U\subseteq B_n\) and \(V\subseteq B_k\),
\[
        |UV|\ge \frac{|U|\,|V|}{f(n+k)}.
\]
\end{definition}

Changing the finite generating set changes word lengths only up to a multiplicative
constant.  Thus a shrinkage bound \(f\) is replaced by a bound of the form \(f(Cn)\)
for some constant \(C\).  In particular, polynomial product shrinkage is independent
of the choice of finite generating set.

A natural problem is to determine the optimal shrinkage function for specific groups.
For instance, given a finitely generated group \(G\), one may ask for the smallest
possible degree of a polynomial \(P\) such that
\[
        |UV|\ge \frac{|U|\,|V|}{P(n+k)}
\]
for all finite sets \(U\subseteq B_n\) and \(V\subseteq B_k\).  This degree is independent
of the choice of finite generating set.  The following theorem shows more generally
that any shrinkage function whose logarithm satisfies the summability condition below
is enough to force the existence of growth rates for approximate semigroups.

\begin{theorem}\label{thrm:existence-of-limit}
Let \(G\) be a finitely generated group with product shrinkage at most \(f\), where
\(f:\mathbb N\to [1,\infty)\) is non-decreasing and satisfies
\[
        \sum_{n=1}^{\infty}\frac{\log f(n)}{n^2}<\infty.
\]
Then, for every non-empty approximate semigroup \(A\subseteq G\), the limit
\[
        \lim_{n\to\infty}\frac{1}{n}\log |A\cap B_n|
\]
exists in \(\mathbb R\).
\end{theorem}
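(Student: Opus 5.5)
Set \(a_n=\log|A\cap B_n|\); this is defined for all \(n\) after replacing \(A\) by \(A\cup\{1\}\), which changes counts by at most one and keeps the approximate-semigroup property. The plan is to prove an almost-superadditive inequality for a shifted version of \(a_n\) and then apply the de Bruijn--Erd\H{o}s theorem. That theorem says: if \(\phi\) is non-decreasing, \(\sum\phi(n)/n^2<\infty\), and \(c_{n+k}\ge c_n+c_k-\phi(n+k)\) for all \(n,k\), then \(\lim c_n/n\) exists in \((-\infty,\infty]\). Finiteness will come from \(a_n\le \log|B_n|\le n\log|S|\), and \(a_n\ge 0\), so the limit will lie in \(\mathbb R\).

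\textbf{Key estimate.} Let \(X\) be a correction set and \(K=\max_{x\in X}|x|\).
\begin{itemize}
\item Take \(U=A\cap B_n\) and \(V=A\cap B_k\). Then \(UV\subseteq A^2\cap B_{n+k}\subseteq AX\).
\item Each \(g\in UV\) can be written \(g=cx\) with \(c\in A\), \(x\in X\), and \(|c|\le |g|+K\le n+k+K\).
\item Hence the map \((c,x)\mapsto cx\) shows \(|UV|\le |X|\,|A\cap B_{n+k+K}|\).
\item Combining with product shrinkage gives
\[
|A\cap B_{n+k+K}|\ \ge\ \frac{|A\cap B_n|\,|A\cap B_k|}{|X|\,f(n+k)}.
\]
\end{itemize}

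\textbf{Removing the shift.} Put \(c_n=a_{n-K}\) for \(n>K\), i.e.\ \(c_{m+K}=a_m\). Write \(m_1=n-K\), \(m_2=k-K\). Then
\[
c_{n+k}=a_{n+k-K}=a_{m_1+m_2+K}\ \ge\ a_{m_1}+a_{m_2}-\log|X|-\log f(m_1+m_2),
\]
so
\[
c_{n+k}\ge c_n+c_k-\phi(n+k),\qquad \phi(N)=\log|X|+\log f(N),
\]
using that \(f\) is non-decreasing. The function \(\phi\) is non-decreasing and satisfies \(\sum\phi(N)/N^2<\infty\), since \(\sum 1/N^2<\infty\) and the hypothesis on \(f\) controls the other term.

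The inequality only holds for \(n,k>K\). I will handle this either by extending \(c_n\) suitably for small \(n\) (for example \(c_n=0\), with the error \(\phi\) enlarged by a constant, which is allowed since \(a\) is bounded on a finite range and monotone), or by using the version of de Bruijn--Erd\H{o}s valid for indices beyond a threshold. Then \(c_n/n\) converges, and so does \(a_n/n=c_{n+K}/n\).

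\textbf{Main obstacle.} The delicate step is applying de Bruijn--Erd\H{o}s in the precise form required. The classical statement uses the restricted range \(n\le k\le 2n\) and an error \(\phi(n+k)\). I must check that:
\begin{itemize}
\item boundary issues at small indices are harmless, since extending a sequence with a bounded perturbation only increases \(\phi\) by a constant;
\item monotonicity of \(a_n\) can be used to pass between \(n\) and nearby indices without loss.
\end{itemize}
If the black-box version is inconvenient, I would instead prove the limit directly. Set \(L=\limsup a_n/n\). Iterating the inequality along dyadic-like splittings gives
\[
a_N\ \ge\ \frac{N}{n}\,a_n-\sum_{j}\frac{N}{2^j}\,\frac{\phi(2^j n)}{2^j n}\cdot O(1).
\]
The summability condition makes the tail small, which yields \(\liminf a_N/N\ge a_n/n-o(1)\) along \(n\) realizing \(L\).
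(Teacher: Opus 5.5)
Your proposal is correct and follows the paper's proof essentially step for step. You use the same containment \(A_nA_k\subseteq A_{n+k+K}X\) combined with product shrinkage, the same zero-padded shift \(c_n=a_{n-K}\) (the paper's shifted almost-superadditivity lemma, where monotonicity alone handles the small indices, so no enlargement of \(\phi\) is needed), and then the classical de Bruijn--Erd\H{o}s theorem, with finiteness of the limit coming from \(0\le a_n\le C_1 n\).
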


For example, product shrinkage at most
\[
        f(n)=c^{n^{1-\epsilon}}
\]
with \(c>1\) and \(\epsilon>0\) is sufficient to apply the theorem.

To prove Theorem~\ref{thrm:existence-of-limit}, we use the following shifted version
of the de Bruijn--Erd\H{o}s almost-superadditivity theorem \cite{de1951some} (which is the theorem below with $C_2=0$).

\begin{lemma}[Shifted almost superadditivity]\label{lem:shifted_nearly_fekete}
Let \(f:\mathbb N\to\mathbb R_{\ge 0}\) be non-decreasing, and assume there is a constant \(C_1>0\) such that
\[
f(n)\le C_1 n \qquad \forall n\in\mathbb N.
\]
Assume there exist \(C_2\in\mathbb N\) and a function \(g:\mathbb N\to\mathbb R_{\ge 0}\) such that
\begin{equation}\label{eq:shifted_nearly_superadd}
f(x+y+C_2)\ge f(x)+f(y)-g(x+y)\qquad \forall x,y\in\mathbb N.
\end{equation}
Let
\[
G(n):=\max_{1\le k\le n} g(k),
\]
and assume that
\begin{equation}\label{eq:DBE_G}
\sum_{n=1}^\infty \frac{G(n)}{n^2}<\infty.
\end{equation}
Then the limit
\[
\lim_{n\to\infty}\frac{f(n)}{n}
\]
exists in \(\mathbb R\).
\end{lemma}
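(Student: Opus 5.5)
The plan is to reduce Lemma~\ref{lem:shifted_nearly_fekete} to the unshifted de Bruijn--Erd\H{o}s theorem, which is the case \(C_2=0\). The shift is absorbed by passing to a translated function. Set
\[
\tilde f(n):=f(n-C_2)\quad (n> C_2),
\]
or, more symmetrically, \(F(n):=f(n+C_2)\) read in a shifted variable. Substituting \(x=a-C_2\) and \(y=b-C_2\) into \eqref{eq:shifted_nearly_superadd} gives
\[
f(a+b-C_2)\ge f(a-C_2)+f(b-C_2)-g(a+b-2C_2).
\]
Hence \(\tilde f(a+b)\ge \tilde f(a)+\tilde f(b)-\tilde g(a+b)\), where \(\tilde g(n):=g(n-2C_2)\). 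This holds for all \(a,b> C_2\). Extend \(\tilde f\) by \(0\) on \(\{1,\dots,C_2\}\) and handle the small-argument cases separately. Since \(f\ge 0\) is non-decreasing, the inequality with \(a\le C_2\) reads \(\tilde f(a+b)\ge \tilde f(b)-\tilde g\). This holds by monotonicity provided \(\tilde g\ge0\), and that is automatic.

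Next I check that the hypotheses of de Bruijn--Erd\H{o}s hold for \(\tilde f,\tilde g\). The running maximum \(\tilde G(n)\le G(n)\) is still monotone. The tail sum \(\sum \tilde G(n)/n^2\) converges because \(\tilde G(n)\le G(n)\) (after setting \(g=0\) at nonpositive arguments) and \(\sum G(n)/n^2<\infty\). For the conclusion, note that de Bruijn--Erd\H{o}s, in its standard form with a non-decreasing error \(\varphi\) satisfying \(\sum\varphi(n)/n^2<\infty\), gives that \(\lim \tilde f(n)/n\) exists in \([-\infty,\infty)\). To be safe I would reprove that step directly rather than cite it. The usual argument works with dyadic scales. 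Superadditivity up to error iterates to
\[
\frac{\tilde f(2^{j}n)}{2^{j}n}\ \ge\ \frac{\tilde f(n)}{n}-\sum_{i\ge 1}\frac{G(2^{i}n)}{2^{i}n},
\]
and the tail sum tends to \(0\) by the condensation form of \eqref{eq:DBE_G}. One then interpolates between dyadic points using a binary-expansion decomposition of a general \(N\). The errors accumulated there are again controlled by \(\sum G(n)/n^2\).

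Finiteness of the limit comes from the two one-sided bounds. The hypothesis \(f(n)\le C_1 n\) gives \(\limsup \tilde f(n)/n\le C_1\). The limit is also bounded below: the dyadic inequality shows that \(\liminf \tilde f(n)/n\ge \tilde f(n_0)/n_0-o(1)>-\infty\) for a fixed \(n_0\), using \(f\ge0\). Finally, \(f(n)/n=\tilde f(n+C_2)/(n+C_2)\cdot (n+C_2)/n\), so the limit of \(f(n)/n\) exists and equals that of \(\tilde f\).

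I expect the main obstacle to be the interpolation step, going from dyadic multiples to all large \(N\) while keeping the errors summable. I would first try to cite de Bruijn--Erd\H{o}s with a non-decreasing error function directly, after replacing \(g\) by its monotone majorant \(G\). This is why the lemma introduces \(G\). Only if the exact citation form is inconvenient would I write out the binary-decomposition argument.
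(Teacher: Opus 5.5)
Your proposal is correct and matches the paper's proof: you shift by \(C_2\), set the shifted function to \(0\) below the shift, handle the small-argument cases by monotonicity and \(g\ge 0\), check that the running maximum of the shifted error is still summable against \(1/n^2\), and cite the unshifted de Bruijn--Erd\H{o}s theorem, with \(0\le f(n)\le C_1 n\) giving finiteness. One small slip: for almost-superadditive functions de Bruijn--Erd\H{o}s gives a limit in \((-\infty,\infty]\), not \([-\infty,\infty)\), which does no harm since your two-sided bounds place the limit in \([0,C_1]\).
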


\begin{remark}
Lemma~\ref{lem:shifted_nearly_fekete} remains true if \(f\) is allowed to take arbitrary real values.
\end{remark}

\begin{proof}
Define \(h:\mathbb N\to\mathbb R_{\geq 0}\) by
\[
        h(n):=
        \begin{cases}
        0,& n<C_2,\\
        f(n-C_2),& n\ge C_2.
        \end{cases}
\]
Then \(h\) is non-decreasing and satisfies \(h(n)\le C_1n\) for all sufficiently
large \(n\), hence after changing \(C_1\) it satisfies \(h(n)\le C_1n\) for all
\(n\).

We claim that \(h\) satisfies an unshifted almost-superadditivity inequality.  If
\(u,v\ge C_2\), then
\[
\begin{aligned}
h(u+v)
&= f(u+v-C_2) \\
&\ge f(u-C_2)+f(v-C_2)-g(u+v-2C_2) \\
&= h(u)+h(v)-g(u+v-2C_2).
\end{aligned}
\]
If at least one of \(u,v\) is less than \(C_2\), then the corresponding value of
\(h\) is \(0\), and since \(h\) is non-decreasing we have
\[
        h(u+v)\ge h(u)+h(v).
\]
Thus, defining
\[
        \widetilde g(t):=
        \begin{cases}
        0,& t<2C_2,\\
        g(t-2C_2),& t\ge 2C_2,
        \end{cases}
\]
we have
\[
        h(u+v)\ge h(u)+h(v)-\widetilde g(u+v)
\]
for all \(u,v\in\mathbb N\).

Let
\[
        \widetilde G(n):=\max_{1\leq t\leq n}\widetilde g(t).
\]
Then \(\widetilde G(n)\le G(n)\) for all sufficiently large \(n\), and therefore
\[
        \sum_{n=1}^{\infty}\frac{\widetilde G(n)}{n^2}<\infty.
\]
Hence the classical theorem of de Bruijn and Erd\H{o}s (which is the present lemma with $C_2=0$) applies to \(h\), and gives
the existence of
\[
        \lim_{n\to\infty}\frac{h(n)}{n}.
\]

Since \(h(n)=f(n-C_2)\), this is equivalent to the existence of
\[
\lim_{n\to\infty}\frac{f(n)}{n}.
\]
\end{proof}

We are now ready to prove Theorem~\ref{thrm:existence-of-limit}.

\begin{proof}[Proof of Theorem~\ref{thrm:existence-of-limit}]
Let \(A\subseteq G\) be a non-empty approximate semigroup.  If \(A\) is finite, then
\[
        \lim_{n\to\infty}\frac{1}{n}\log |A\cap B_n|=0,
\]
so we may assume that \(A\) is infinite.

Choose a finite set \(X\subseteq G\) such that
\[
        A^2\subseteq AX.
\]
Enlarging \(X\), and replacing \(A\) by \(A\cup\{1\}\) if necessary, we may assume
that \(1\in X\) and \(1\in A\).  This replacement does not change the desired limit:
the two counting functions differ by at most \(1\), and \(|A\cap B_n|\to\infty\).
Set
\[
        K:=\max_{x\in X}|x|.
\]

For \(n\in\mathbb N\), define
\[
        A_n:=A\cap B_n,
        \qquad
        a(n):=\log |A_n|.
\]

Since \(1\in A\), each \(A_n\) is nonempty, so \(a\) is well-defined.
Moreover, \(a\) is non-decreasing and takes values in \(\mathbb R_{\ge 0}\).

Since \(A_n\subseteq B_n\), we have
\[
a(n)\le \log |B_n|.
\]
Because \(G\) is finitely generated, balls grow at most exponentially, so there exists \(C_1>0\) such that
\[
a(n)\le C_1 n
\qquad \forall n\in\mathbb N.
\]

We now prove the shifted almost-superadditivity estimate.
Let \(n,m\in\mathbb N\).
Since \(A_n,A_m\subseteq A\), we have
\[
        A_nA_m\subseteq A^2\subseteq AX.
\]
Also, every element \(w\in A_nA_m\) satisfies
\[
        |w|\le n+m.
\]
If \(w\in A_nA_m\), write \(w=ax\) with \(a\in A\) and \(x\in X\).  Then
\[
        |a|=|wx^{-1}|\le |w|+|x|\le n+m+K.
\]
Hence \(a\in A_{n+m+K}\), so
\[
        A_nA_m\subseteq A_{n+m+K}X.
\]
Therefore
\[
        |A_nA_m|\le |A_{n+m+K}|\,|X|,
\]
or equivalently,
\begin{equation}\label{eq:skewd-set-growth}
|A_{n+m+K}|\ge \frac{|A_nA_m|}{|X|}.
\end{equation}

Since \(G\) has product shrinkage at most \(f\), we have
\[
        |A_nA_m|\ge \frac{|A_n|\,|A_m|}{f(n+m)}.
\]
Combining this with \eqref{eq:skewd-set-growth}, we obtain
\[
        |A_{n+m+K}|
        \ge
        \frac{|A_n|\,|A_m|}{|X|\,f(n+m)}.
\]
Taking logarithms gives
\[
        a(n+m+K)
        \ge
        a(n)+a(m)-\bigl(\log |X|+\log f(n+m)\bigr).
\]
Thus Lemma~\ref{lem:shifted_nearly_fekete} applies with
\[
        C_2=K,
        \qquad
        g(t)=\log |X|+\log f(t).
\]
Indeed, since \(f\) is non-decreasing, the function \(g\) is non-decreasing, and
\[
        \sum_{t=1}^{\infty}\frac{g(t)}{t^2}
        =
        \log|X|\sum_{t=1}^{\infty}\frac1{t^2}
        +
        \sum_{t=1}^{\infty}\frac{\log f(t)}{t^2}
        <\infty.
\]

All the assumptions of Lemma~\ref{lem:shifted_nearly_fekete} are satisfied. We conclude that the limit
\[
        \lim_{n\to\infty}\frac{a(n)}{n}
\]
exists. Since \(a(n)=\log |A\cap B_n|\), this is precisely the existence of
\[
        \lim_{n\to\infty}\frac{1}{n}\log |A\cap B_n|.
\]
\end{proof}

\subsection{Large product growth}
\label{subsec:large-product-growth}

Theorem~\ref{thrm:existence-of-limit} applies to any group whose product shrinkage
function satisfies the summability condition above. As we will see in section \ref{sec:rd-product-growth}, the property of \emph{rapid decay} implies the much stronger condition of polynomial product shrinkage, which we highlight with the following definition.

\begin{definition}[Large product growth]\label{dfn:large-product-growth}
Let \(G\) be a finitely generated group with a fixed finite generating set.  We say
that \(G\) has \emph{large product growth} if it has product shrinkage at most \(P\)
for some polynomial \(P\).
\end{definition}

If \(G\) has large product growth, then the hypothesis of
Theorem~\ref{thrm:existence-of-limit} is satisfied.  Indeed, for a polynomial \(P\),
\[
        \log P(n)=O(\log n),
\]
and hence
\[
        \sum_{n=1}^{\infty}\frac{\log P(n)}{n^2}<\infty.
\]

We also record the following consequence in the polynomial-shrinkage case: if the
exponential growth rate is zero, then the growth is polynomially bounded.

\begin{corollary}
\label{cor:zero-growth-polynomial}
Let \(G\) be a finitely generated group with large product growth, and let
\(A\subseteq G\) be a non-empty approximate semigroup.  If
\[
        \lim_{n\to\infty}\frac{1}{n}\log |A\cap B_n|=0,
\]
then \(|A\cap B_n|\) is polynomially bounded.
\end{corollary}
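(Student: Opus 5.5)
We use the same almost-superadditivity inequality as in the proof of Theorem~\ref{thrm:existence-of-limit}, now with a polynomial loss, and iterate it dyadically. Normalize as there, so that \(1\in A\) and \(1\in X\), and write \(a(n)=\log|A\cap B_n|\) and \(K=\max_{x\in X}|x|\). Let \(P\) be the polynomial from large product growth. We may assume \(P\) is non-decreasing and \(P(t)\le C t^d\) for \(t\ge 1\). The proof of Theorem~\ref{thrm:existence-of-limit} gives, for all \(n,m\),
\[
        a(n+m+K)\ge a(n)+a(m)-\log|X|-\log P(n+m).
\]
Taking \(m=n\) and putting \(c=\log|X|+\log C\), we obtain the doubling inequality
\[
        a(2n+K)\ge 2a(n)-c-d\log(2n).
\]

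The plan is to argue by contradiction, showing that superpolynomial growth at one scale propagates to positive exponential growth. Fix \(n_0\) and define \(n_{j+1}=2n_j+K\). Then \(n_j=2^j(n_0+K)-K\le 2^j(n_0+K)\). Dividing the doubling inequality by \(2^{j+1}\) and telescoping gives
\[
        \frac{a(n_j)}{2^j}\ge a(n_0)-\sum_{i\ge 0}\frac{c+d\log(2n_i)}{2^{i+1}}.
\]
Since \(\log(2n_i)\le \log(2(n_0+K))+i\log 2\), the sum is at most \(c'+d\log(n_0+K)\), where \(c'\) depends only on \(c,d,K\). Therefore
\[
        \frac{a(n_j)}{n_j}\ge \frac{a(n_0)-d\log(n_0+K)-c'}{n_0+K}.
\]
Since the limit \(\omega(A)\) exists by Theorem~\ref{thrm:existence-of-limit} and \(n_j\to\infty\), letting \(j\to\infty\) yields
\[
        0=\omega(A)\ge \frac{a(n_0)-d\log(n_0+K)-c'}{n_0+K}.
\]
This bound holds for every \(n_0\).

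From this, \(a(n_0)\le d\log(n_0+K)+c'\) for all \(n_0\), that is, \(|A\cap B_{n_0}|\le e^{c'}(n_0+K)^d\). This is a polynomial bound, as required. The argument does not actually need the contradiction framing; the direct inequality suffices.

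The only delicate step is making the error sum \(\sum_i 2^{-i-1}\log(2n_i)\) depend on \(n_0\) only through \(\log n_0\), with constants uniform in \(n_0\). This works because \(\log n_i\) grows only linearly in \(i\) while the weights decay geometrically. The shift \(K\) is absorbed by working with \(n_0+K\). One should also check that the loss term \(\log|X|\) and the constant in \(P\) contribute only to \(c'\). The result is an explicit polynomial bound of degree \(\deg P\).
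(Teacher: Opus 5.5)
Your proposal is correct and is essentially the paper's own proof. Both iterate the shifted inequality along $n_{j+1}=2n_j+K$, bound the accumulated loss by $d\log(n_0+K)+O(1)$ using the geometric weights $2^{-i-1}$, and note that a positive value of $a(n_0)-d\log(n_0+K)-c'$ would force $\liminf_j a(n_j)/n_j>0$, contradicting zero growth rate.
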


\begin{proof}
Use the notation from the proof of Theorem~\ref{thrm:existence-of-limit}; in
particular,
\[
        a(n)=\log |A\cap B_n|.
\]
Since \(G\) has large product growth, we may take the shrinkage function to be
polynomial.  Therefore, from the proof, there are constants \(K\geq 0\) and \(C>0\)
such that
\[
        a(n+m+K)\geq a(n)+a(m)-C\log(1+n+m)
\]
for all \(n,m\geq 1\).

Fix \(N\geq 1\), and define
\[
        N_0=N,\qquad N_{r+1}=2N_r+K.
\]
Then \(N_r\leq 2^r(N+K)\).  Iterating the displayed inequality gives
\[
        \frac{a(N_r)}{2^r}
        \geq
        a(N)-C\sum_{i=0}^{r-1}2^{-i-1}\log(1+2N_i+K).
\]
Since \(N_i\leq 2^i(N+K)\), the sum is bounded by
\[
        C'\log(1+N+K)
\]
for a constant \(C'\) independent of \(N\) and \(r\).  Hence
\[
        a(N_r)\geq 2^r\bigl(a(N)-C'\log(1+N+K)\bigr)
\]
If the quantity in parentheses were positive for some \(N\), then
\[
        \limsup_{r\to\infty}\frac{a(N_r)}{N_r}>0,
\]
contradicting the assumption that the exponential growth rate is zero.  Therefore
\[
        a(N)\leq C'\log(1+N+K)
\]
for all \(N\).  Thus
\[
        |A\cap B_N|\leq (1+N+K)^{C'},
\]
so \(|A\cap B_N|\) is polynomially bounded.
\end{proof}

\subsection{A sufficient condition for linear product shrinkage}
\label{subsec:sufficient-linear-shrinkage}
For groups with large product growth, a natural question is to determine
the optimal degree of the polynomial shrinkage factor.
In this subsection we record a simple sufficient condition for linear product shrinkage.  This isolates
an abstract layer-peeling argument that will be used below for both free groups and
hyperbolic groups.

Let
\[
        S_i:=\{g\in G:|g|=i\}.
\]
For a finite set \(U\subseteq G\), write
\[
        U_i:=U\cap S_i,
        \qquad
        \Lambda^+(U):=\{i\geq 1:U_i\neq\emptyset\}.
\]

\begin{definition}[Property \(Q(\zeta)\)]\label{dfn:property-Q}
Let \(0<\zeta\leq 1\).  We say that \(G\) satisfies \emph{property \(Q(\zeta)\)} if,
for all finite non-empty sets \(U,V\subseteq G\) with both \(\Lambda^+(U)\) and
\(\Lambda^+(V)\) non-empty, either there exists \(i\in\Lambda^+(U)\) such that
\[
        |U_iV|\geq \zeta |U_i||V|,
\]
or there exists \(j\in\Lambda^+(V)\) such that
\[
        |UV_j|\geq \zeta |U||V_j|.
\]
\end{definition}

\begin{theorem}[Layer peeling from property \(Q(\zeta)\)]\label{thm:Q-layer-peeling}
Suppose that \(G\) satisfies property \(Q(\zeta)\).  Let \(U,V\subseteq G\) be finite
non-empty sets and assume that
\[
        r(U,V):=|\Lambda^+(U)|+|\Lambda^+(V)|>0.
\]
Then
\[
        |UV|\geq \frac{\zeta}{r(U,V)}|U||V|.
\]
\end{theorem}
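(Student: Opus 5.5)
The proof will go by induction on \(r=r(U,V)\), peeling off one sphere layer at a time using property \(Q(\zeta)\). At each step the product set \(UV\) is compared with the product obtained after deleting that layer, and the two resulting bounds are combined by an averaging inequality.

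\emph{Degenerate cases.} Suppose first that one of \(\Lambda^+(U)\), \(\Lambda^+(V)\) is empty, say \(\Lambda^+(U)=\emptyset\). Then \(U\subseteq S_0=\{1\}\), so \(U=\{1\}\) and \(|UV|=|V|=|U||V|\). Since \(\zeta\le 1\le r\), this gives \(|UV|\ge \frac{\zeta}{r}|U||V|\). The case \(\Lambda^+(V)=\emptyset\) is symmetric. This covers the base case \(r=1\), since then one of the two sets \(\Lambda^+\) is empty.

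\emph{Inductive step.} Now assume both \(\Lambda^+(U)\) and \(\Lambda^+(V)\) are non-empty, so property \(Q(\zeta)\) applies. By symmetry, suppose there is \(i\in\Lambda^+(U)\) with \(|U_iV|\ge\zeta|U_i||V|\); the other alternative is handled identically with the roles of the factors exchanged. Put \(U'=U\setminus U_i\).

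If \(U'=\emptyset\), then \(U=U_i\) and \(|UV|\ge \zeta|U||V|\ge\frac{\zeta}{r}|U||V|\), so we are done.

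Otherwise \(\Lambda^+(U')=\Lambda^+(U)\setminus\{i\}\), so \(r(U',V)=r-1\). Moreover \(r(U',V)\ge|\Lambda^+(V)|>0\), so the induction hypothesis applies to the pair \((U',V)\) and gives
\[
|U'V|\ge \frac{\zeta}{r-1}|U'||V|.
\]
Since both \(U_iV\) and \(U'V\) are contained in \(UV\), we obtain
\[
|UV|\ge \max\Bigl\{\zeta|U_i||V|,\ \tfrac{\zeta}{r-1}|U'||V|\Bigr\}.
\]

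\emph{Combining the bounds.} The key elementary point is the inequality
\[
\max\Bigl\{a,\tfrac{b}{r-1}\Bigr\}\ \ge\ \tfrac1r\,a+\tfrac{r-1}{r}\cdot\tfrac{b}{r-1}=\tfrac{a+b}{r}.
\]
It holds because the middle expression is a convex combination of \(a\) and \(b/(r-1)\), and a maximum dominates any convex combination. Applying it with \(a=\zeta|U_i||V|\) and \(b=\zeta|U'||V|\), and using that \(|U_i|+|U'|=|U|\) (the layer \(U_i\) and its complement \(U'\) partition \(U\)), we get
\[
|UV|\ge \frac{\zeta(|U_i|+|U'|)|V|}{r}=\frac{\zeta}{r}|U||V|.
\]

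The main obstacle is that \(U_iV\) and \(U'V\) may overlap, so their sizes cannot simply be added. This is why the argument uses the maximum of the two bounds together with the weighted-average inequality, rather than a sum. The remaining care needed is to keep the bookkeeping of \(r\) correct when a layer is removed, including the possible residual layer \(S_0\cap U\), which the degenerate case handles.
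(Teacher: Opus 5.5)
Your proof is correct and follows the paper's approach: peel off one layer using property \(Q(\zeta)\), handle \(U\subseteq\{1\}\) or \(V\subseteq\{1\}\) directly, and induct on \(r(U,V)\). The paper phrases the induction as partitioning \(U\times V\) into at most \(r(U,V)\) rectangles \(A_s\times C_s\) with \(|A_sC_s|\ge\zeta|A_s||C_s|\), then picks the largest rectangle by pigeonhole; your inequality \(\max\{a,b/(r-1)\}\ge (a+b)/r\) is that same pigeonhole step, carried out at each stage of the induction rather than at the end.
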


\begin{proof}
We prove a decomposition claim.  If \(r(U,V)>0\), then \(U\times V\) can be
partitioned into at most \(r(U,V)\) rectangles
\[
        A_1\times C_1,\ldots,A_m\times C_m,
        \qquad m\leq r(U,V),
\]
such that \(A_s\subseteq U\), \(C_s\subseteq V\), and
\[
        |A_sC_s|\geq \zeta |A_s||C_s|
\]
for every \(s\).

We prove the claim by induction on \(r(U,V)\).  If \(U\subseteq\{1\}\), then the
single rectangle \(U\times V\) works, since \(U=\{1\}\) and
\(|UV|=|V|=|U||V|\).  The case \(V\subseteq\{1\}\) is identical.

Now assume that both \(\Lambda^+(U)\) and \(\Lambda^+(V)\) are non-empty.  By
property \(Q(\zeta)\), either there exists \(i\in\Lambda^+(U)\) such that
\[
        |U_iV|\geq \zeta |U_i||V|,
\]
or there exists \(j\in\Lambda^+(V)\) such that
\[
        |UV_j|\geq \zeta |U||V_j|.
\]
In the first case, set \(U'=U\setminus U_i\).  If \(U'=\emptyset\), then
\(U\times V=U_i\times V\) is a good rectangle.  If \(U'\neq\emptyset\), then
\(r(U',V)=r(U,V)-1\), so by induction \(U'\times V\) can be partitioned into at
most \(r(U,V)-1\) good rectangles.  Adding the rectangle \(U_i\times V\) gives the
desired partition of \(U\times V\).  The second case is identical, removing \(V_j\)
instead.  This proves the claim.

Using the claim, choose such a partition.  Since the rectangles partition \(U\times V\),
\[
        \sum_{s=1}^m |A_s||C_s|=|U||V|.
\]
Hence for some \(s\),
\[
        |A_s||C_s|\geq \frac{|U||V|}{m}
        \geq \frac{|U||V|}{r(U,V)}.
\]
Since \(A_sC_s\subseteq UV\), we get
\[
        |UV|\geq |A_sC_s|
        \geq \zeta |A_s||C_s|
        \geq \frac{\zeta}{r(U,V)}|U||V|.
\]
\end{proof}

\begin{corollary}\label{cor:Q-linear-product-shrinkage}
Suppose that \(G\) satisfies property \(Q(\zeta)\).  If \(U\subseteq B_n\) and
\(V\subseteq B_k\) are finite non-empty sets, then
\[
        |UV|\geq \frac{\zeta}{n+k+1}|U||V|.
\]
\end{corollary}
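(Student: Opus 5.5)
The corollary should follow directly from Theorem~\ref{thm:Q-layer-peeling}. The only work is to bound \(r(U,V)\) by the radii and to handle the degenerate case \(r(U,V)=0\) separately.

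First, since \(U\subseteq B_n\), every \(i\in\Lambda^+(U)\) satisfies \(1\le i\le n\). Hence \(|\Lambda^+(U)|\le n\), and likewise \(|\Lambda^+(V)|\le k\). This gives \(r(U,V)\le n+k\).

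If \(r(U,V)>0\), Theorem~\ref{thm:Q-layer-peeling} gives
\[
|UV|\ge \frac{\zeta}{r(U,V)}|U||V|\ge \frac{\zeta}{n+k}|U||V|\ge \frac{\zeta}{n+k+1}|U||V|.
\]

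If \(r(U,V)=0\), then both \(\Lambda^+(U)\) and \(\Lambda^+(V)\) are empty. So \(U\) and \(V\) are non-empty subsets of \(S_0=\{1\}\), which forces \(U=V=\{1\}\). Then
\[
|UV|=1=|U||V|\ge \frac{\zeta}{n+k+1}|U||V|,
\]
using \(\zeta\le 1\) and \(n+k+1\ge 1\).

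There is no real obstacle here. The only points needing care are the degenerate case excluded by the theorem's hypothesis, and the use of \(\zeta\le 1\) in that case.
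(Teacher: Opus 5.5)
Your proposal is correct and follows essentially the same argument as the paper: you apply Theorem~\ref{thm:Q-layer-peeling} with the bounds \(|\Lambda^+(U)|\le n\) and \(|\Lambda^+(V)|\le k\), and treat the degenerate case \(U=V=\{1\}\) separately. Your explicit use of \(\zeta\le 1\), which is built into the definition of \(Q(\zeta)\), fills in the detail the paper dismisses as ``trivial.''
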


\begin{proof}
If \(|\Lambda^+(U)|+|\Lambda^+(V)|>0\), then
Theorem~\ref{thm:Q-layer-peeling} gives
\[
        |UV|\geq
        \frac{\zeta}{|\Lambda^+(U)|+|\Lambda^+(V)|}|U||V|.
\]
Since \(|\Lambda^+(U)|\leq n\) and \(|\Lambda^+(V)|\leq k\), this implies the
claimed bound.  If \(|\Lambda^+(U)|+|\Lambda^+(V)|=0\), then
\(U=V=\{1\}\), and the estimate is trivial.
\end{proof}

The remaining sections give several sources of large product growth and also study the product-shrinkage problem in its own right.  
The implication from Rapid Decay to large product growth is a standard consequence of the usual convolution estimates,
applied to indicator functions together with the Cauchy--Schwarz energy inequality; we include the proof in the final section for completeness.  
For the existence of the limit in Theorem~\ref{thrm:existence-of-limit}, any polynomial shrinkage bound is sufficient, so sharper estimates do not change the conclusion once such a bound is
known.  
Nevertheless, determining the optimal shrinkage function for a given ambient group is a separate and natural quantitative problem.  
Rapid Decay generally gives only a polynomial bound and does not identify the optimal dependence on the radii.
The direct arguments in the next sections address this sharper question: for free and hyperbolic groups they give the optimal linear order of loss in the ball--ball regime,
and in free groups they also determine the exact best constant for products of subsets
of spheres.


\section{Large product growth for free groups}

In this section we prove large product growth for free groups. 
More precisely, we show that for every free group \(F_r\) with \(r\ge 2\), product sets of finite subsets of balls have at most shrinkage linear in the sum of the diameters
of the containing balls.
Our bound is tight up to a constant factor (of $1/2$).

We then refine this estimate in the case where the two
sets are supported on spheres, and completely solve the problem, by presenting matching upper and lower bounds. 

These results provide the free-group case of the large product growth condition introduced
in the previous section.

For words $u,v \in F_r$, the \emph{cancellation length} of $u,v$,
is the length of the maximal word $w$ such that $w$ is a prefix of $v$ and
$w^{-1}$ is a suffix of $u$ and is equal to 
\[
        \frac{|u|+|v|-|uv|}{2}=(u^{-1}\mid v).
\]

\subsection{A ball--ball product estimate in free groups}

Let \(F_r\) be the free group of rank \(r\ge 2\), equipped with the word metric
with respect to a fixed free generating set. We write
\[
S_m=\{g\in F_r: |g|=m\},\qquad B_m=\{g\in F_r: |g|\le m\}.
\]
For  \(U\subseteq F_r\), let
\[
U_i=U\cap S_i.
\]
Let $\Lambda(U)=\{i:U_i \neq \emptyset\}$ and $\Lambda^+(U)=\Lambda(U) \cap [1,\infty)$.

We first record the basic injectivity claim which will be used throughout.

\begin{proposition}[Marker injectivity]\label{prop:marker-injectivity}
Let  $U,V \subset F_r$ with \(V\subseteq S_m\).  Suppose $p$ is a word that is a prefix of every word in $V$ and let $P$ be the set of pairs $(u,v) \in U \times V$ such that $(u^{-1}\mid v)\leq |p|$.
Then
the multiplication map
\[
P\longrightarrow F_r,\qquad (u,v)\mapsto uv
\]
is injective and therefore $|UV|\geq |P|$.
\end{proposition}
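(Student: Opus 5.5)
The plan is to recover \((u,v)\) from the product \(w=uv\) by reading off the reduced form of \(w\).

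Fix \((u,v)\in P\) and let \(c=(u^{-1}\mid v)\le |p|\) be the cancellation length. In the free group the reduced form of \(uv\) is obtained by deleting the last \(c\) letters of \(u\) and the first \(c\) letters of \(v\). Write
\[
        u=u'\,s^{-1},\qquad v=s\,v',
\]
with \(|s|=c\), so that \(w=u'v'\) is reduced. Since \(c\le |p|\) and \(p\) is a prefix of \(v\), the cancelled word \(s\) is the length-\(c\) prefix of \(p\). In particular the letters of \(v\) at positions \(c+1,\dots,|p|\) survive, so \(v'=p''v''\), where \(p=s\,p''\) and \(v=p\,v''\). Hence the reduced word \(w\) contains the marker \(p''\) immediately after \(u'\). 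Moreover \(|v'|=m-c\), because every element of \(V\) has length exactly \(m\).

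The key step is to show that \(c\), and hence everything else, is determined by \(w\) alone. Write \(n(w)=|w|\). Then \(|u'|=|w|-(m-c)\), so \(w\) determines \(u'\) once \(c\) is known: \(u'\) is the prefix of \(w\) of length \(|w|-m+c\). Likewise \(v'\) is the suffix of \(w\) of length \(m-c\). We then recover
\[
        v=p_{[1..c]}\,v',\qquad u=u'\,p_{[1..c]}^{-1}.
\]
So it suffices to show that \(c\) is a function of \(w\).

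Suppose \((u_1,v_1)\) and \((u_2,v_2)\) in \(P\) give the same \(w\), with cancellation lengths \(c_1<c_2\le|p|\). We will use the fact that \(w\) is reduced and compare two descriptions of it. From the second pair, \(v_2'=p_{[c_2+1..|p|]}v_2''\) is the suffix of \(w\) of length \(m-c_2\). Its first letter is \(p_{c_2+1}\) if \(c_2<|p|\). From the first pair, the suffix of \(w\) of length \(m-c_1\) begins with \(p_{c_1+1}\dots p_{|p|}\). Within it, the letter at offset \(c_2-c_1\) is the last letter of \(u_2'\).

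The contradiction should come from reducedness. Consider the suffix of \(w\) of length \(m-c_1\). Its first \(c_2-c_1\) letters are \(p_{c_1+1}\cdots p_{c_2}\), which lie inside \(u_2'\). But the cancellation for the second pair removed \(p_{1..c_2}\) from \(v_2\) against the tail \(p_{1..c_2}^{-1}\) of \(u_2\). So \(u_2\) ends in \(p_{c_1+1}\cdots p_{c_2}\cdot p_{c_2}^{-1}\cdots p_1^{-1}\), which is not reduced when \(c_2>c_1\). This is a contradiction, so \(c_1=c_2\), and injectivity follows. The inequality \(|UV|\ge|P|\) is then immediate.

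The main obstacle I expect is the bookkeeping in this comparison step, especially the edge case \(c_2=|p|\). I would handle it uniformly by noting that both \(u_2'\) and the tail of \(u_2\) are subwords of a reduced word, and I will double-check the indexing there.
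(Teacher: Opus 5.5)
Your proof is correct, but it takes a more roundabout route than the paper's. You first show that the cancellation length is determined by $w$, and this step is valid. If $c_1<c_2$, then positions $|u_1'|+1,\dots,|u_2'|$ of the reduced word $w$ carry $p_{c_1+1}\cdots p_{c_2}$. So $u_2'$ ends in $p_{c_2}$ while $s_2^{-1}$ begins with $p_{c_2}^{-1}$, which contradicts $u_2's_2^{-1}$ being the reduced form of $u_2$. Your worry about the edge case $c_2=|p|$ is moot, since the argument only uses the letters $p_{c_1+1},\dots,p_{c_2}$, and these exist whenever $c_2\le|p|$.

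The paper never determines $c$. It writes $v=pb$, which is your $v''$. The hypothesis $c\le|p|$ means $b$ is untouched by cancellation, and $|b|=m-|p|$ is the same for every pair in $P$. Hence $b$ is simply the suffix of length $m-|p|$ of the reduced word $w$, so $v=pb$ is read off from $w$ directly and $u=wv^{-1}$. You already had this tail in hand. Noticing that its length does not depend on the pair would have let you skip the comparison of cancellation lengths entirely.

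Your route yields no extra information: once injectivity holds, $c$, being a function of $(u,v)$, is automatically a function of $w$. So the paper's direct reading is the more economical argument, though yours is a sound alternative.
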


\begin{proof}
Let $p=p_1p_2\ldots p_{j}$ be the expression of $p$ as a product
of generators.
Suppose $(u,v),(u',v') \in P$ and $uv=u'v'$. We prove $(u,v)=(u',v')$.

Let \(c,c'\le j\) be the cancellation
lengths of $u,v$ and $u',v'$, respectively. We can therefore
define the prefix $a$ of $u$ and suffix $b$ of $v$ such that:
\[
u=a(p_1\cdots p_c)^{-1},\qquad
v=(p_1\cdots p_{j})b,
\]
and so that \(uv=ap_{c+1}\cdots p_{j}b\) is reduced. Similarly,
\[
u'=a'(p_1\cdots p_{c'})^{-1},\qquad
v'=(p_1\cdots p_{j})b',
\]
so that \(u'v'=a'p_{c'+1}\cdots p_{j} b'\) is reduced.
Thus $ap_{c+1}\cdots p_{j}b =a'p_{c'+1}\cdots p_{j}b'$.
and both of these representations are reduced. Since $|b|=|b'|=m-j$, $b=b'$
and therefore $v=v'$ and since $uv=u'v'$ we also have $u=u'$.
\end{proof}

We use this Proposition to prove:

\begin{lemma}[Heavy-prefix/suffix lemma]\label{lem:heavy-prefix}
The free group satisfies Property $Q(\zeta)$ with $\zeta = 1/4$. 
That is, let \(U,V\subseteq F_r\) be finite with both $\Lambda^+(U)$ and $\Lambda^+(V)$ nonempty.  Then either there
exists \(i \in \Lambda^+(U)\) such that
\[
|U_iV|\ge \frac14 |U_i||V|,
\]
or there exists \(j \in \Lambda^+(V)\) such that
\[
|UV_j|\ge \frac14 |U||V_j|.
\]
\end{lemma}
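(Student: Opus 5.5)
The plan is to derive property $Q(1/4)$ from the marker injectivity of Proposition~\ref{prop:marker-injectivity}, applied to a large family of words sharing a prefix. We also use the symmetric version obtained by inversion: $(UV_j)^{-1}=V_j^{-1}U^{-1}$, and prefixes of $U_i^{-1}$ are inverses of suffixes of $U_i$. So a common suffix $s$ of many words of $U_i$ plays the role of a marker for products $U_iV$, with the pairs having $(u^{-1}\mid v)\le |s|$ multiplied injectively. We fix $i=\max\Lambda^+(U)$ and $j=\max\Lambda^+(V)$, and work with the prefix tree of $V_j$ and the suffix tree of $U_i$.

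The first step is to choose a heavy prefix. Let $p$ be a longest word such that the set $V_j(p)$ of words in $V_j$ beginning with $p$ satisfies $|V_j(p)|\ge \tfrac12|V_j|$. Such a $p$ exists because the empty word qualifies, and $|p|\le j$. By maximality, every one-letter extension $px$ has $|V_j(px)|<\tfrac12|V_j|$.

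Next, fix $u\in U$. A pair $(u,v)$ with $v\in V_j(p)$ has $(u^{-1}\mid v)>|p|$ only if $u$ ends with $(px)^{-1}$ for the single letter $x$ determined by $u$, and $v\in V_j(px)$. So for each $u$ the bad $v$'s form one class $V_j(px_u)$. If $\sum_u |V_j(p)\setminus V_j(px_u)|\ge \tfrac14|U||V_j|$, then Proposition~\ref{prop:marker-injectivity}, applied to $U$ and $V_j(p)$ with marker $p$, gives $|UV_j|\ge \tfrac14|U||V_j|$, and we are done with this $j$.

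Otherwise a substantial proportion of the words $u\in U$ (weighted by $|V_j(px_u)|$) end with a suffix $(px)^{-1}$ for which $V_j(px)$ has size between roughly $\tfrac14|V_j|$ and $\tfrac12|V_j|$. In that case the plan is to switch to the $U$ side. Here we need that these $u$ lie in (or can be reduced to) the top layer $U_i$. We then run the same heavy-suffix construction on $U_i$, using the reversed marker, and obtain $|U_iV|\ge\tfrac14|U_i||V|$ unless the suffixes of $U_i$ concentrate back onto inverses of prefixes of $V$. The intended contradiction is that both heavy paths, the prefix path of $V_j$ and the inverted suffix path of $U_i$, cannot keep extending each other indefinitely. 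Each extension step strictly lengthens the marker, while lengths are bounded by $\min\{i,j\}$, and the halving thresholds leave a factor at least $\tfrac12\cdot\tfrac12=\tfrac14$ at termination.

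The main obstacle is making this alternation rigorous. The bad pairs on the $V$ side involve all of $U$, not only the layer $U_i$, so passing to the $U_i$ side loses control of the layers. We also need the threshold $\tfrac12$ and the final $\tfrac14$ to be compatible across the switch. I would first try to restrict from the outset to the pairs $U_i\times V_j$ and use the maximality of $i$ and $j$. Then, in whichever of the two products the heavy path terminates first, the full sets $U$ (respectively $V$) contribute only pairs with cancellation bounded by the current marker length, so the injectivity of Proposition~\ref{prop:marker-injectivity} still applies.
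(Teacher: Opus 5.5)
Your argument is complete only in the branch $\sum_u|V_j(p)\setminus V_j(px_u)|\ge\tfrac14|U||V_j|$. The complementary branch, the alternation between the prefix tree of $V_j$ and the suffix tree of $U_i$, is never carried out, and as set up it cannot be.

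\begin{itemize}
\item Your first step fixes $u$ and uses non-heaviness of $V_j(px_u)$, a cell on the same side as the heavy cell $V_j(p)$. But $|V_j(p)|\ge\tfrac12|V_j|$ and $|V_j(px)|<\tfrac12|V_j|$ give only $|V_j(p)\setminus V_j(px)|>0$, with no constant fraction.
\item What actually controls the bad pairs is how many $u$ end in $(px)^{-1}$. This must be controlled in every layer of $U$, because the conclusion $|UV_j|\ge\tfrac14|U||V_j|$ involves all of $U$.
\item Restricting to $U_i\times V_j$ with $i,j$ the top layers does not achieve this. A lower layer $U_{i'}$, possibly much larger than $U_i$, may consist entirely of words ending in $(px)^{-1}$, with $|V_j(px)|$ just below and $|V_j(p)|$ just above $\tfrac12|V_j|$.
\end{itemize}
So your closing claim, that the full set $U$ contributes only pairs with cancellation bounded by the current marker length, is false in general. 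The termination and threshold bookkeeping of the alternation is also left unspecified.

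The missing idea, which removes the need for any alternation, is to choose the heavy cell of maximal depth jointly over \emph{all} prefix cells $V_j(w)$, $j\in\Lambda^+(V)$, and \emph{all} inverse-suffix cells $\{u\in U_i: u \text{ ends in } w^{-1}\}$, $i\in\Lambda^+(U)$. Here a cell is heavy if it contains at least half of its layer.

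\begin{itemize}
\item If no nonempty $w$ is heavy, use the empty marker. For $u\neq 1$, the $v\in V_j$ that cancel with $u$ all begin with one letter, so more than half of $V_j$ is good for each $u$. Hence $|UV_j|\ge\tfrac12|U||V_j|$.
\item Otherwise let $w$ be heavy of maximal length $\ell$, say for $V_j$. Count bad pairs by fixing $v\in V_j(w)$ instead of $u$. If $|v|=\ell$, every pair is good. If $|v|>\ell$ and $a$ is the $(\ell+1)$st letter of $v$, the $u\in U_i$ with cancellation $>\ell$ are exactly those ending in $(wa)^{-1}$. This is a cell of depth $\ell+1$, so by maximality it is heavy for no layer $U_i$.
\item Hence at least half of each layer of $U$ is good. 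The good pairs number at least $\tfrac12|U||V_j(w)|\ge\tfrac14|U||V_j|$, and Proposition~\ref{prop:marker-injectivity} with marker $w$ finishes.
\item If $w$ is heavy for some $U_i$, the inverted argument gives $|U_iV|\ge\tfrac14|U_i||V|$.
\end{itemize}
The $\tfrac14$ is your $\tfrac12\cdot\tfrac12$, but obtained in a single step. The layer in the conclusion is whichever layer carries the deepest heavy cell, not a top layer.
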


\begin{proof}
For a word $w$, let $V(w)$ be the set of words of $V$ that
begin with $w$ and $V_j(w)=V(w) \cap S_j$. For $j \in \Lambda^+(V)$,
let $\alpha_j(w)=|V_j(w)|/|V_j|$ (the fraction of words 
in $V_j$ that have $w$ as a prefix) and for $j \in \Lambda^+(U)$ let $\beta_j(w)$ denote
the fraction of words of $U_j$ that have $w^{-1}$ as a suffix.
Say that a word $w$ is \emph{half-heavy for $V_j$} if $\alpha_j(w) \geq 1/2$
is \emph{half-heavy for $U_j$} if $\beta_j(w) \geq 1/2$.

If there is no non-empty word that is half-heavy for any layer
then choose any $j \in \Lambda^+(V)$. We claim $|UV_j|\geq \frac{1}{2}|U||V_j|$.
Let \(P\) be the set of pairs \((u,v) \in U \times V_j\) with cancellation length 0.
Consider an arbitrary \(u \in U\).  If \(u=1\), then \((u,v)\in P\) for every
\(v\in V_j\).  Otherwise, let \(a\) be the final letter of \(u\).  Since
\(\alpha_j(a^{-1})<1/2\), the pair \((u,v)\) lies in \(P\) for more than half of the \(v \in V_j\).  Therefore \(|P|\geq \frac12|U||V_j|\), and by
Proposition~\ref{prop:marker-injectivity}, with \(p\) the empty word, we conclude that
\[
        |UV_j|\geq \frac12 |U||V_j|.
\]

So assume that there is a non-trivial word that is half-heavy for some
layer. Let $\ell$ be the length of the longest such word and among
words of length $\ell$ choose a word $w$ for which $\max(\max_{j \in \Lambda^+(V)} \alpha_j(w), \max_{j \in \Lambda^+(U)} \beta_j(w))$ is maximum.
Assume that this maximum occurs for level $V_j$ (the case that it occurs
for $U_i$ is similar.)  We have $|V_j(w)|=\alpha_j(w)|V_j|$.

Now let $P$ be the set of pairs $(u,v)\in U \times V_j(w)$
with cancellation length at most $\ell$.  By Proposition
~\ref{prop:marker-injectivity}, $|UV_j(w)|\geq |P|$.  We claim
$|UV_j(w)| \geq \frac{1}{2}|U||V_j(w)|$. Fix
$v \in V_j(w)$ and consider the set $U'=\{u \in U:(u,v) \in P\}$
and let $U'_i=U'\cap U_i$.  We claim that $|U'_i| \geq \frac{1}{2}|U_i|$.
This is trivially true for $i=0$.  For $i \in \Lambda^+(U)$,
if $|v|=\ell$ then $U'_i=U_i$.
If $|v|>\ell$, let $a$ be the letter in position $\ell+1$ of $v$.
$(u,v)$ has cancellation length greater than $\ell$ if and only if $(wa)^{-1}$ is a suffix of $u$ and thus $|U'_i=(1-\beta_i(wa))|U_i|$.  
By the choice of $w$, $\beta_i(wa)<1/2$, so $|U'_i|\geq|U_i|/2$.  
Since this is true for all $i$, $|P| \geq \frac{1}{2}|U||V_j(w)|\geq \frac{\alpha_j(w)}{2}|U||V_j| \geq \frac{1}{4}|U||V_j|$.

\end{proof}


We now deduce the ball--ball estimate from the general layer-peeling criterion of
Section~\ref{subsec:sufficient-linear-shrinkage}.  Lemma~\ref{lem:heavy-prefix}
says precisely that the free group satisfies property \(Q(1/4)\).

\begin{theorem}[Ball--ball estimate]\label{thm:ball-ball-heavy-prefix}
Let \(U,V\subseteq F_r\) be finite non-empty sets, and suppose
\[
        |\Lambda^+(U)|+|\Lambda^+(V)|>0.
\]
Then
\[
        |UV|\ge
        \frac{|U||V|}
        {4\bigl(|\Lambda^+(U)|+|\Lambda^+(V)|\bigr)}.
\]
\end{theorem}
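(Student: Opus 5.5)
This is a direct combination of two results already in place. Lemma~\ref{lem:heavy-prefix} shows that \(F_r\) satisfies property \(Q(1/4)\) in the sense of Definition~\ref{dfn:property-Q}. Once that is known, Theorem~\ref{thm:Q-layer-peeling} with \(\zeta=1/4\) gives the claimed bound immediately. The hypothesis \(|\Lambda^+(U)|+|\Lambda^+(V)|>0\) is exactly the hypothesis \(r(U,V)>0\) of that theorem, so
\[
        |UV|\geq \frac{1/4}{r(U,V)}|U||V| = \frac{|U||V|}{4\bigl(|\Lambda^+(U)|+|\Lambda^+(V)|\bigr)}.
\]

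The only thing to check is that the layer notation matches. The sets \(U_i=U\cap S_i\) and \(\Lambda^+(U)\) used in Lemma~\ref{lem:heavy-prefix} coincide with those in Definition~\ref{dfn:property-Q}, because in both places \(S_i\) is the word-metric sphere for the free generating set.

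It is also worth seeing how the induction in Theorem~\ref{thm:Q-layer-peeling} handles degenerate cases. Property \(Q\) is applied only when both \(\Lambda^+(U)\) and \(\Lambda^+(V)\) are nonempty. If one of the sets is \(\{1\}\), the trivial rectangle is used. A set may also have empty \(\Lambda^+\) because it equals \(\{1\}\) after peeling; this is covered by the base case \(U\subseteq\{1\}\).

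There is no real obstacle here; the substantive work was done in Lemma~\ref{lem:heavy-prefix}. As an immediate consequence, if \(U\subseteq B_n\) and \(V\subseteq B_k\), then \(|\Lambda^+(U)|\le n\) and \(|\Lambda^+(V)|\le k\). This gives \(|UV|\ge |U||V|/(4(n+k))\) whenever not both sets equal \(\{1\}\), which yields the ball--ball inequality of Theorem~\ref{thm:free-product-growth} (with the \(+1\) covering the trivial case).
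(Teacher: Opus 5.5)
Your proof is correct and is exactly the paper's argument: the paper proves this theorem in one line by applying Theorem~\ref{thm:Q-layer-peeling} with \(\zeta=1/4\), with Lemma~\ref{lem:heavy-prefix} supplying property \(Q(1/4)\) for \(F_r\). Your closing remark is also sound; deriving the ball--ball bound directly from this theorem, using \(|\Lambda^+(U)|\le n\) and \(|\Lambda^+(V)|\le k\), gives the stated denominator \(4(n+k)+1\) exactly, whereas Corollary~\ref{cor:Q-linear-product-shrinkage} with \(\zeta=1/4\) literally yields the slightly weaker \(4(n+k+1)\).
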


\begin{proof}
This is Theorem~\ref{thm:Q-layer-peeling} applied with \(\zeta=1/4\).
\end{proof}

\begin{corollary}
Let \(U\subseteq B_n\) and \(V\subseteq B_k\) be finite non-empty sets. Then
\[
        |UV|\ge \frac{|U||V|}{4(n+k)+1}.
\]
\end{corollary}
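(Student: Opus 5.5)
The plan is to deduce this directly from Theorem~\ref{thm:ball-ball-heavy-prefix}, which is available since Lemma~\ref{lem:heavy-prefix} shows that $F_r$ has property $Q(1/4)$. The argument splits according to whether the two sets contain any nontrivial elements, that is, whether $|\Lambda^+(U)|+|\Lambda^+(V)|$ is zero or positive.

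\emph{Degenerate case.} Suppose $|\Lambda^+(U)|+|\Lambda^+(V)|=0$. Then every element of $U$ and of $V$ has length $0$. Since both sets are non-empty, this forces $U=V=\{1\}$. In this case
\[
|UV|=1=|U||V|\geq \frac{|U||V|}{4(n+k)+1},
\]
because $4(n+k)+1\geq 1$.

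\emph{Main case.} Suppose $|\Lambda^+(U)|+|\Lambda^+(V)|>0$. Since $U\subseteq B_n$, the set $\Lambda^+(U)$ is contained in $\{1,\ldots,n\}$, so $|\Lambda^+(U)|\leq n$. Likewise $|\Lambda^+(V)|\leq k$. Theorem~\ref{thm:ball-ball-heavy-prefix} then gives
\[
|UV|\geq \frac{|U||V|}{4\bigl(|\Lambda^+(U)|+|\Lambda^+(V)|\bigr)}\geq \frac{|U||V|}{4(n+k)}\geq \frac{|U||V|}{4(n+k)+1}.
\]

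Once Lemma~\ref{lem:heavy-prefix} and the layer-peeling theorem are taken as given, the argument is routine bookkeeping. The only point needing care is the degenerate case, which is exactly what the $+1$ in the denominator absorbs.
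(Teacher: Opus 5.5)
Your proof is correct and is essentially the paper's argument. The paper simply cites Corollary~\ref{cor:Q-linear-product-shrinkage} with $\zeta=1/4$, and that corollary's proof is exactly your case split: Theorem~\ref{thm:Q-layer-peeling} together with $|\Lambda^+(U)|\le n$ and $|\Lambda^+(V)|\le k$, plus the trivial case $U=V=\{1\}$.

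Your version is in fact slightly more careful. Taken literally, the corollary's statement only gives $|U||V|/(4(n+k+1))$, whereas bounding by the layer count directly, as you do, yields the stated denominator $4(n+k)+1$.
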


\begin{proof}
This follows from Corollary~\ref{cor:Q-linear-product-shrinkage} with \(\zeta=1/4\).
\end{proof}

\begin{remark}
The linear dependence on the radius is optimal in order.  Indeed, take a nontrivial element \(a\in F_r\) and consider
\[
        U=V=\{a^{-N},a^{-N+1},\ldots,a^N\}\subseteq B_N.
\]
Then
\[
        |U|=|V|=2N+1,
        \qquad
        |UV|=4N+1.
\]
Thus any bound of the form
\[
        |UV|\geq \frac{|U||V|}{f(n+k)}
\]
requires, when \(n=k=N\), that
\[
        f(2N)\geq \frac{(2N+1)^2}{4N+1},
\]
which is asymptotic to \(N\).
\end{remark}

\subsection{A sharp bound for products of subsets of spheres}

We now prove a sharp lower bound on the ratio
$\frac{|UV|}{|U||V|}$ when each of $U$ and $V$ is a subset
of a sphere.

Define $\gamma(n,k)=\min_{U \subseteq S_n,V \subseteq S_k} \frac{|UV|}{|U||V|}$.

Let $(d_j:j \geq 0)$ denote the sequence given by the recurrence $d_0=1$ and
$d_j=\frac{1}{2}+\frac{d_{j-1}}{4}$.  Thus $d_j=\frac{2+4^{-j}}{3}$,

\begin{theorem}[Sphere--sphere product estimate]\label{thm:sphere-sphere}
For all $n \geq k \geq 0$, $$\gamma(n,k)=\gamma(k,n)=d_k.$$
\end{theorem}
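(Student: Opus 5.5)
The symmetry $\gamma(n,k)=\gamma(k,n)$ comes from inversion. The map $g\mapsto g^{-1}$ preserves every sphere, and $(UV)^{-1}=V^{-1}U^{-1}$, so $|UV|=|V^{-1}U^{-1}|$. It therefore suffices to show $\gamma(n,k)=d_k$ for $n\ge k$. I would prove the lower bound $\gamma(n,k)\ge d_k$ by induction on $k$, using the same cancellation bookkeeping as in Proposition~\ref{prop:marker-injectivity}, and then give an explicit recursive construction attaining $d_k$.

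\emph{Lower bound.} The case $k=0$ is trivial, since then $V=\{1\}$ and $d_0=1$. For $k\ge1$ and a letter $c$, let $V^c\subseteq V$ be the words beginning with $c$ and $U^c\subseteq U$ the words ending with $c^{-1}$. Set $p_c=|V^c|/|V|$ and $q_c=|U^c|/|U|$, so that $\sum_c p_c=\sum_c q_c=1$. A pair $(u,v)$ has positive cancellation exactly when $(u,v)\in U^c\times V^c$ for some $c$.
\begin{itemize}
\item Pairs with zero cancellation multiply injectively by Proposition~\ref{prop:marker-injectivity} with $p$ empty. Their products have length exactly $n+k$.
\item Products of pairs with positive cancellation have length at most $n+k-2$, so they are disjoint from the previous products. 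For each fixed $c$ they contain $U'_cV'_c$, where $U'_c=\{uc:u\in U^c\}\subseteq S_{n-1}$ and $V'_c=\{c^{-1}v:v\in V^c\}\subseteq S_{k-1}$.
\end{itemize}
Since $n-1\ge k-1$, the induction hypothesis gives
\[
\frac{|UV|}{|U||V|}\ \ge\ 1-\sum_c p_cq_c+d_{k-1}\max_c p_cq_c .
\]
Because $d_k=\tfrac12+\tfrac{d_{k-1}}4$, it remains to prove the inequality
\[
\sum_c p_cq_c-d\max_c p_cq_c\ \le\ \tfrac12-\tfrac d4,\qquad d\in[\tfrac23,1],
\]
for probability vectors $p,q$. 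Note that $d_{k-1}\ge\tfrac23$ holds throughout.

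This optimization step is the main obstacle. The equality cases are two letters with $p=q=(\tfrac12,\tfrac12)$, and, when $d=\tfrac23$, a single letter. The single-letter case shows exactly why $d\ge 2/3$ is needed. My plan for the general case is as follows. Let $m=\max_c p_cq_c$, attained at a letter $c_0$. Bound the remaining sum by
\[
\sum_{c\ne c_0}p_cq_c\le \min\Bigl\{m,\ \max_{c\ne c_0}q_c\Bigr\}\,(1-p_{c_0})
\]
and by $(1-p_{c_0})(1-q_{c_0})$. Then reduce to a two-variable inequality in $x=p_{c_0}$, $y=q_{c_0}$, namely
\[
xy(1-d)+\min\bigl\{(1-x)(1-y),\ \text{a bound of order } m\bigr\}\le\tfrac12-\tfrac d4,
\]
and check it on the boundary and at critical points. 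Since the constraint is linear in $d$, it suffices to check $d=\tfrac23$ and $d=1$.

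\emph{Upper bound (sharpness).} Let $a,b$ be two free generators. Define $V_0=\{1\}$ and $V_j=aV_{j-1}\cup bV_{j-1}\subseteq S_j$. By induction every word of $V_j$ begins with $a$ or $b$, so the words are reduced and $|V_j|=2^j$. Take $U=a^{-(n-k)}V_k^{-1}\subseteq S_n$; the juncture is reduced because the words of $V_k^{-1}$ start with $a^{-1}$ or $b^{-1}$. Now compute $UV$ recursively, splitting by the first letter of $v$ and the last letter of $u$:
\begin{itemize}
\item the matched pairs ($a^{-1}$ against $a$, and $b^{-1}$ against $b$) both collapse onto the smaller product $U'V'$;
\item the two mixed families ($a^{-1}$ against $b$, and $b^{-1}$ against $a$) are reduced, hence injective, mutually disjoint, and disjoint from $U'V'$ by length.
\end{itemize}
This gives $|UV|=|U'V'|+2|U'||V'|$ with $|U|=2|U'|$ and $|V|=2|V'|$. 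So the ratio satisfies $r_k=\tfrac12+\tfrac{r_{k-1}}4$ with $r_0=1$, i.e.\ $r_k=d_k$. Hence $\gamma(n,k)\le d_k$, matching the lower bound.
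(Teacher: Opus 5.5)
Your symmetry argument, the induction set-up, and the reduction to
\[
\sum_c p_cq_c-d\max_c p_cq_c\le \tfrac12-\tfrac d4 ,\qquad d\in[\tfrac23,1],
\]
are correct and match the paper. The induction set-up has two parts: zero-cancellation pairs are injective with length $n+k$, and a single diagonal block $U^cV^c=U'_cV'_c$ is handled by induction and is disjoint from them by length.

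Your extremal pair $U=a^{-(n-k)}V_k^{-1}$, $V=V_k$ is also correct. It differs from the paper's $U=\{a^{-1},b^{-1}\}^n$, and your recursion $|UV|=|U'V'|+2|U'||V'|$ nicely shows the lower-bound recursion is tight at every step, because both diagonal blocks collapse onto the same set $U'V'$.

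The gap is the displayed inequality itself. You correctly flag it as the main obstacle but do not prove it, and the route you sketch fails. Your bound $\sum_{c\ne c_0}p_cq_c\le\min\{m,\max_{c\ne c_0}q_c\}(1-p_{c_0})$ is false: at your own equality case $p=q=(\tfrac12,\tfrac12)$ the left side is $\tfrac14$ and the right side is $\tfrac18$. More generally, the constraint $p_cq_c\le m$ yields at best $\sum_c p_cq_c\le\sqrt m$. This is attained by uniform vectors on $N$ letters ($m=N^{-2}$, sum $N^{-1}$), so no bound ``of order $m$'' can exist. Your other bound, $\max_{c\ne c_0}q_c\,(1-p_{c_0})$, is true but is not a function of $x=p_{c_0}$, $y=q_{c_0}$. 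So the two-variable reduction does not close as stated.

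The paper sidesteps this by choosing the diagonal letter $c$ to maximize $p_c+q_c$ rather than $p_cq_c$. With $\lambda_a=\tfrac12(p_a+q_a)$ one has $p_aq_a\le\lambda_a^2$. It then suffices to show $\sum_{a\ne c}\lambda_a^2+(1-d)\lambda_c^2\le\tfrac{2-d}{4}$ for a single probability vector whose largest entry is $\lambda_c$ (Proposition~\ref{prop:optimization}). That is a two-case check according to whether $\lambda_c\le\tfrac12$.

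Alternatively, you can keep your $c_0$ and replace the false bound using Cauchy--Schwarz. Put $s=\sqrt{xy}=\sqrt m$ and $t=\sqrt{(1-x)(1-y)}$. Then $\sum_{c\ne c_0}p_cq_c\le\sqrt m\sum_{c\ne c_0}\sqrt{p_cq_c}\le st$, and also $\sum_{c\ne c_0}p_cq_c\le t^2$. Moreover $s+t\le 1$, again by Cauchy--Schwarz. The quantity to bound is then at most $(1-d)s^2+\min\{t^2,st\}$, which is non-decreasing in $t$, so you may set $t=1-s$:
\begin{itemize}
\item for $s\le\tfrac12$ it equals $s-ds^2$, which is increasing on $[0,\tfrac12]$ and hence at most $\tfrac12-\tfrac d4$;
\item for $s\ge\tfrac12$ it is convex, with endpoint values $\tfrac{2-d}{4}$ and $1-d$, and $1-d\le\tfrac{2-d}{4}$ exactly when $d\ge\tfrac23$.
\end{itemize}
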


We first note an elementary inequality.

\begin{proposition}\label{prop:optimization}
Let $(\lambda_i:i \in I)$ be a probability distribution on finite set
 \(I\) and let $m$ be an index for which $\lambda_m$ is maximum.
 Then for any constant $c \in [0,1/3]$,
 
\[
\sum_{i \neq m}\lambda_i^2+c\lambda_m^2 \leq \frac{c+1}{4}.
\]
\end{proposition}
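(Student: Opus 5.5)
The plan is to reduce everything to a one-variable optimization in \(M:=\lambda_m\), using two different bounds on \(\sum_{i\neq m}\lambda_i^2\) depending on whether \(M\ge 1/2\) or \(M<1/2\). The only facts about the distribution that we use are \(\sum_{i\neq m}\lambda_i=1-M\) and \(0\le\lambda_i\le M\) for every \(i\).

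\textbf{Case \(M\ge 1/2\).} Here all the \(\lambda_i\) with \(i\neq m\) are non-negative and sum to \(1-M\). Hence
\[
\sum_{i\neq m}\lambda_i^2\le\Bigl(\sum_{i\neq m}\lambda_i\Bigr)^2=(1-M)^2 .
\]
The quantity to bound is therefore at most \(\varphi(M)=(1-M)^2+cM^2\). This function is convex in \(M\), so on \([1/2,1]\) its maximum is attained at an endpoint. We have \(\varphi(1/2)=(1+c)/4\), and \(\varphi(1)=c\). The inequality \(c\le (1+c)/4\) is equivalent to \(c\le 1/3\), which is exactly where the hypothesis on \(c\) enters. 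So in this case the bound is \((1+c)/4\).

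\textbf{Case \(M<1/2\).} Here we use \(\lambda_i\le M\) to write \(\lambda_i^2\le M\lambda_i\). Summing over \(i\neq m\) gives
\[
\sum_{i\neq m}\lambda_i^2\le M(1-M).
\]
The quantity to bound is therefore at most \(\psi(M)=M-(1-c)M^2\). This is a concave parabola with vertex at \(1/(2(1-c))\ge 1/2\), so \(\psi\) is non-decreasing on \([0,1/2]\). Consequently
\[
\psi(M)\le\psi(1/2)=\tfrac12-\tfrac{1-c}{4}=\tfrac{1+c}{4}.
\]

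The two cases together give the claimed inequality. The only real subtlety is the choice of bound in each case. The bound \(M(1-M)\) used on all of \([0,1]\) fails: its maximum is \(1/(4(1-c))\), which exceeds \((1+c)/4\) for every \(c>0\). That is why the split at \(M=1/2\) is needed, with the cruder bound \((1-M)^2\) used in the heavy case.
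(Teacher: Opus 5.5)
Your proof is correct and matches the paper's argument: the same split at $\lambda_m=1/2$, the same bound $\lambda_i^2\le\lambda_m\lambda_i$ leading to the increasing function $\lambda_m-(1-c)\lambda_m^2$ in the light case, and the same bound $(1-\lambda_m)^2+c\lambda_m^2$ handled by convexity and endpoint comparison in the heavy case. As in the paper, the hypothesis $c\le 1/3$ is used exactly in the comparison of the two endpoints.
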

\begin{proof}
If $\lambda_m \leq \frac{1}{2}$ then the sum is at most 
\begin{eqnarray*}
    \lambda_m(\sum_{i \neq m}\lambda_i + c\lambda_m) & \leq & \lambda_m(1-\lambda_m+c\lambda_m)\\
    &=& \lambda_m-(1-c)\lambda_m^2 \leq \frac{1+c}{4},
\end{eqnarray*}
where the final inequality holds because the previous expression is an increasing function of $\lambda_m$ on $[0,\frac{1}{2}]$.

If $\lambda_m > \frac{1}{2}$, the sum is at most:

\begin{eqnarray*}
    \sum_{i \neq m}\lambda_i^2 + c\lambda_m^2 & \leq & (\sum_{i \neq m} \lambda_i)^2+c\lambda_m^2\\
    &=& (1-\lambda_m)^2+c\lambda_m^2 \leq \frac{1+c}{4},
\end{eqnarray*}
where the final inequality holds because the previous expression is convex
in $\lambda_m$ and so is maximized either at 1/2 or 1. Since $c\leq 1/3$ it is maximized at $\lambda_m=1/2$. 
\end{proof}

\begin{proof}[Proof of Theorem~\ref{thm:sphere-sphere}]

By symmetry we have $\gamma(n,k)=\gamma(k,n)$ so it suffices
to prove that for $n \geq k$, $\gamma(n,k)=d_k$.

First we prove that $\gamma(n,k) \geq d_k$.
We use induction on \(k\). Assume $U \subseteq S_n$
and $V \subseteq S_k$; we show that $|UV|\geq |U||V|d_k$.

The case \(k=0\) is trivial,
since \(V\subseteq S_0=\{1\}\), so \(|UV|=|U||V|\).

Assume now \(k\ge 1\), and suppose the $\gamma(n,k-1)=d_{k-1}$
for $n \geq k-1$.

For each oriented generator \(a\in\mathcal A\), define
\[
 U(a)=\{u\in U:\text{ the last letter of }u\text{ is }a^{-1}\},
\]
and
\[
 V(a)=\{v\in V:\text{ the first letter of }v\text{ is }a\}.
\]
Thus
\[
 U=\bigsqcup_{a\in\mathcal A}U(a),
 \qquad
 V=\bigsqcup_{a\in\mathcal A}V(a).
\]
Write
\[
 \alpha_a=\frac{|U(a)|}{|U|},
 \qquad
 \beta_a=\frac{|V(a)|}{|V|}.
\]
These are probability vectors.

If \(a\neq b\), then every product in \(U(a)V(b)\) has no cancellation at the
junction. Hence every element of \(U(a)V(b)\) has length \(n+k\), and the splitting
after the first \(n\) letters recovers the pair \((u,v)\). Therefore the sets
\(U(a)V(b)\), with \(a\neq b\), are pairwise disjoint and
\[
 \left|\bigcup_{a\neq b}U(a)V(b)\right|
 =
 \sum_{a\neq b}|U(a)||V(b)|
 =
 |U||V|\left(1-\sum_a\alpha_a\beta_a\right).
\]

Consider a fixed  oriented generator \(c\) (to be specified later). Let

\[
 U'_c=\{u':u'c^{-1}\in U(c)\},
 \qquad
 V'_c=\{v':cv'\in V(c)\}.
\]
Then
\[
 U(c)V(c)=U'_cV'_c.
\]
By the induction hypothesis,
\[
 |U(c)V(c)|
 =
 |U'(c)V'(c)|
 \ge
 d_{k-1}|U'(c)||V'(c)|
 =
 d_{k-1}|U(c)||V(c)|.
\]

Every word in \(U(c)V(c)\) has length at most \(n+k-2\), whereas every
word of \(U(a)V(b)\) for \(a\neq b\), has length exactly \(n+k\). Hence $U_cV_c$
is disjoint from $\bigcup_{a \neq b}U(a)V(b)$ .  

Therefore:
\[
|UV| \geq |U||V|\left(1-\sum_{a \neq c}\alpha_a\beta_a
- (1-d_{k-1})\alpha_c\beta_c\right).
\]
Note that the sets \(U(a)V(a)\) may overlap with one another for different choices
of \(a\), so we only include one of them in the sum.

We now specify \(c\) to be a letter maximizing
\[
        \alpha_c+\beta_c.
\]
Define
\[
        \lambda_a=\frac12(\alpha_a+\beta_a).
\]
Then \((\lambda_a)_{a\in\mathcal A}\) is a probability distribution, \(c\) maximizes
\(\lambda_a\), and
\[
        \alpha_a\beta_a\leq \lambda_a^2
\]
for every \(a\).  Since \(d_{k-1}\geq 2/3\), we have
\[
        1-d_{k-1}\in [0,1/3].
\]
Therefore Proposition~\ref{prop:optimization}, applied with
\(c_0=1-d_{k-1}\), gives
\[
\begin{aligned}
\sum_{a \neq c}\alpha_a\beta_a+(1-d_{k-1})\alpha_c\beta_c
&\leq
\sum_{a \neq c}\lambda_a^2+(1-d_{k-1})\lambda_c^2  \\
&\leq
\frac{1+(1-d_{k-1})}{4}.
\end{aligned}
\]
Consequently
\[
\begin{aligned}
|UV|
&\geq
|U||V|\left(1-\frac{2-d_{k-1}}{4}\right)  \\
&=
|U||V|\left(\frac12+\frac{d_{k-1}}4\right)  \\
&=
d_k|U||V|.
\end{aligned}
\]


Next we prove $\gamma(n,k)\leq d_k$ by constructing an example
of $U \subseteq S_n$ and $V \subseteq S_k$ for which $|UV|=d_k|U||V|$.

Choose two free generators \(a,b\in F_r\). Let
\[
U=\{a^{-1},b^{-1}\}^n
\]
be the set of all negative words of length \(n\) in the letters \(a,b\), and let
\[
V=\{a,b\}^k
\]
be the set of all positive words of length \(k\) in the letters \(a,b\). Then
\(U\subseteq S_n\), \(V\subseteq S_k\), and
\[
|U|=2^n,\qquad |V|=2^k.
\]

It remains to compute \(|UV|\). Write
\[
W_m=\{a,b\}^m
\qquad\text{and}\qquad
W_*=\bigcup_{m\geq 0}W_m.
\]
Then
\[
U=\{x^{-1}:x\in W_n\},
\qquad
V=W_k.
\]
Thus
\[
UV=\{x^{-1}y:x\in W_n,\ y\in W_k\}.
\]

Let \(X\) be the set of pairs \((r,q)\) such that
\[
        r,q\in W_*,
        \qquad
        |q|\leq k,
        \qquad
        |r|-|q|=n-k,
\]
and either \(q\) is the empty word or the first letters of \(r\) and \(q\) are
different.

\begin{claim}
\[
        UV=\{r^{-1}q:(r,q)\in X\}.
\]
\end{claim}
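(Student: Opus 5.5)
We prove the two inclusions separately, working with the free reduction of \(x^{-1}y\) for \(x\in W_n\), \(y\in W_k\). The basic observation is that \(x^{-1}y\) cancels exactly along the longest common prefix of \(x\) and \(y\), since both are positive words in \(a,b\). Concretely, write \(x=pr\) and \(y=pq\), where \(p\) is the longest common prefix of \(x\) and \(y\). Then
\[
        x^{-1}y=r^{-1}p^{-1}pq=r^{-1}q .
\]
By maximality of \(p\), either one of \(r,q\) is empty, or their first letters differ.

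\emph{Inclusion \(UV\subseteq\{r^{-1}q:(r,q)\in X\}\).} Take \((x,y)\) and decompose as above. Then \(r,q\in W_*\), \(|q|=k-|p|\le k\), and
\[
        |r|-|q|=(n-|p|)-(k-|p|)=n-k .
\]
It remains to rule out the case in which \(r\) is empty while \(q\) is not. In that case \(|r|=0<|q|\), which would force \(n<k\), contradicting \(n\ge k\). Hence either \(q\) is empty or \(r,q\) are both nonempty with distinct first letters, so \((r,q)\in X\).

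\emph{Inclusion \(\{r^{-1}q:(r,q)\in X\}\subseteq UV\).} Given \((r,q)\in X\), set \(j=k-|q|\ge 0\), let \(p=a^{j}\), and put \(x=pr\), \(y=pq\). These are positive words with
\[
        |x|=j+|r|=j+|q|+n-k=n,\qquad |y|=j+|q|=k .
\]
Moreover \(x^{-1}y=r^{-1}q\) as group elements. No condition on \(p\) is needed here, because cancellation is computed in the group and the identity holds regardless of how \(p\) is chosen. Thus \(r^{-1}q\in UV\).

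Both steps are routine. The only point needing care is the boundary case \(r\) empty, which is excluded precisely by the standing assumption \(n\ge k\). This claim is what sets up the next step of the paper: the word \(r^{-1}q\) is reduced, so distinct pairs in \(X\) give distinct elements. That step, the subsequent count of \(|X|\), and the verification that \(|X|/2^{n+k}=d_k\), are left for afterward.
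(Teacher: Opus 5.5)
Your proof is correct and follows the paper's argument essentially verbatim: the longest common prefix decomposition gives one inclusion, and padding with a positive prefix of length \(k-|q|\) gives the other. Your explicit use of \(n\ge k\) to exclude the case where \(r\) is empty and \(q\) is nonempty fills in a step the paper passes over with ``by maximality of \(p\)''.
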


\begin{proof}
First let \(w\in UV\). Then \(w=x^{-1}y\) for some \(x\in W_n\) and
\(y\in W_k\). Let \(p\) be the longest common prefix of \(x\) and \(y\), and write
\[
        x=pr,\qquad y=pq.
\]
Then
\[
        w=(pr)^{-1}pq=r^{-1}q.
\]
Moreover,
\[
        |q|\leq k,
        \qquad
        |r|-|q|=|x|-|y|=n-k.
\]
By maximality of \(p\), either \(q\) is empty or the first letters of \(r\) and \(q\)
are different. Thus \((r,q)\in X\).

Conversely, suppose \((r,q)\in X\). Choose any \(p\in W_{k-|q|}\). Then
\[
        pr\in W_n,\qquad pq\in W_k,
\]
because
\[
        |pr|=k-|q|+|r|=k-|q|+(n-k+|q|)=n
\]
and
\[
        |pq|=k.
\]
Therefore
\[
        r^{-1}q=(pr)^{-1}pq\in UV.
\]
This proves the claim.
\end{proof}

The word \(r^{-1}q\) is reduced for every \((r,q)\in X\), and the reduced word
determines the pair \((r,q)\). Hence different pairs in \(X\) give different elements
of \(UV\), and so
\[
        |UV|=|X|.
\]

For \(0\leq j\leq k\), let
\[
        X_j=\{(r,q)\in X: |q|=j\}.
\]
If \(j=0\), then \(q\) is the empty word and \(|r|=n-k\), so
\[
        |X_0|=2^{n-k}.
\]
If \(1\leq j\leq k\), then \(|q|=j\) and \(|r|=n-k+j\). There are \(2^j\) choices
for \(q\). Once \(q\) is chosen, the first letter of \(r\) must be different from the
first letter of \(q\), and the remaining \(n-k+j-1\) letters of \(r\) are arbitrary.
Thus
\[
        |X_j|=2^j\cdot 2^{n-k+j-1}=2^{n-k+2j-1}.
\]
Therefore
\[
\begin{aligned}
    \frac{|UV|}{|U||V|}
    &=
    2^{-(n+k)}
    \left(2^{n-k}+\sum_{j=1}^{k}2^{n-k+2j-1}\right)  \\
    &=
    4^{-k}+\sum_{j=1}^{k}\frac{2^{2j-1}}{4^k}  \\
    &=
    4^{-k}+\sum_{\ell=0}^{k-1}\frac{1}{2\cdot 4^\ell}  \\
    &=
    \frac{2}{3}+\frac{1}{3\cdot 4^k}
    =
    d_k.
\end{aligned}
\]

This proves \(\gamma(n,k)\leq d_k\).
\end{proof}

\section{Large product growth for hyperbolic groups}
\label{sec:large-product-hyperbolic}

In this section we give a direct proof that hyperbolic groups have linear product
shrinkage.  This is independent of Rapid Decay.  The proof follows the same ``heavy prefix'' strategy as in the free group case, with Gromov products replacing
cancellation length.  The only new point is that in a hyperbolic group a prefix is only determined up to bounded error, and so the marker map is no longer injective;
it has uniformly bounded fibers instead.

The estimate obtained here is sharp in its dependence on the radii, up to the value of the group-dependent constant, whenever \(G\) contains an element of infinite order.

Throughout this section $G$ is a $\delta$-hyperbolic group with respect
to the fixed finite generating set $S$.  
Since the word metric is integer-valued, all vertex Gromov products lie in \(\frac12\mathbb Z\), and hence \(\delta\in \frac12\mathbb Z\).  In particular, \(2\delta\) and \(4\delta\) are integers.

We write
\[
S_m:=\{g\in G: |g|=m\}, \qquad B_m:=\{g\in G: |g|\le m\}.
\]
For each $g\in G$, fix once and for all a geodesic from $1$ to $g$.  If
$0\le t\le |g|$ is an integer, let $g[t]$ denote the point at distance $t$ from
$1$ on this chosen geodesic.  Thus
\[
|g[t]|=t, \qquad |g[t]^{-1}g|=|g|-t.
\]
Since all distances in the Cayley graph are integers, all Gromov products of
vertices lie in \(\frac12\mathbb Z\).  Increasing \(\delta\), if necessary, we assume
throughout this section that \(\delta\in \frac12\mathbb Z\).  Thus \(2\delta\) and
\(4\delta\) are integers.

Let
\[
\varepsilon =
\begin{cases}
0,&\text{if the Cayley graph \(\Gamma(G,S)\) is bipartite},\\
1,&\text{otherwise}.
\end{cases}
\]
Equivalently, \(\varepsilon=0\) precisely when all vertex Gromov products are
integral.
Define
\begin{equation}\label{eq:hyp-product-eta}
\eta:= \eta(\delta) =
\frac{1}{4\,|B_{4\delta+\varepsilon}|\,|B_{4\delta}|}.
\end{equation}
The important point is that \(\eta>0\) depends only on the Cayley graph of \(G\), and
not on the radii of the sets being multiplied.  In the tree case, and in particular for
free groups with the standard generating set, one has \(\delta=0\) and
\(\varepsilon=0\), so this gives \(\eta=1/4\).

\begin{theorem}[Linear product shrinkage for hyperbolic groups]
\label{thm:hyp-linear-product-shrinkage}
Let $G$ be a finitely generated hyperbolic group.  Then $G$ has product shrinkage
at most linear in the radii.  More precisely, with $\eta$ as above, if
$U\subseteq B_n$ and $V\subseteq B_k$ are finite non-empty sets, then
\[
|UV|\ge \frac{\eta}{n+k+1}|U||V|.
\]
In particular, $G$ has large product growth.
\end{theorem}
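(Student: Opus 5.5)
By Corollary~\ref{cor:Q-linear-product-shrinkage}, it suffices to show that \(G\) satisfies property \(Q(\eta)\) with \(\eta\) as in \eqref{eq:hyp-product-eta}. The plan follows the free-group proof of Lemma~\ref{lem:heavy-prefix}, with three substitutions. Prefixes of \(v\) become the points \(v[t]\) on the chosen geodesics. Suffixes of \(u\) become the points \(u\,(u^{-1})[t]\), i.e.\ the points at distance \(t\) from \(u\) back towards \(1\). Cancellation length becomes the Gromov product \((u^{-1}\mid v)\). For a layer \(V_j\) and a vertex \(w\) with \(|w|=t\), let \(\alpha_j(w)\) be the fraction of \(v\in V_j\) with \(v[t]=w\). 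Let \(\beta_i(w)\) be the fraction of \(u\in U_i\) with \((u^{-1})[t]=w\). Call \(w\) heavy for a layer if this fraction is at least \(1/2\).

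The first step is a hyperbolic replacement for Proposition~\ref{prop:marker-injectivity}, a bounded-fiber statement. Suppose \(V'\subseteq S_m\) and all \(v\in V'\) share \(v[\ell]=w\). Let \(P\) be the set of pairs \((u,v)\in U\times V'\) with \((u^{-1}\mid v)\le \ell\) (up to an additive \(O(\delta)\)). Then each fiber of \((u,v)\mapsto uv\) on \(P\) should have size at most \(|B_{4\delta+\varepsilon}|\,|B_{4\delta}|\). The argument: given \(g=uv\), the point \(u w\) lies within \(2\delta\) of a geodesic from \(u\) to \(g\). Thin triangles then give \(d(uw, g\,v^{-1}w)\) small, and \(|v^{-1}w|=m-\ell\) is fixed. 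So \(u\) is determined by \(g\) up to two bounded-radius choices: a choice for the position of \(uw\) relative to \(g\), which is where \(B_{4\delta+\varepsilon}\) enters, with \(\varepsilon\) accounting for half-integer Gromov products; and a choice for the offset between \(w\) and the actual point, giving \(B_{4\delta}\). This yields \(|UV'|\ge |P|/(|B_{4\delta+\varepsilon}||B_{4\delta}|)\).

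The second step is the heavy-vertex case analysis. If no vertex of length \(\ge1\) is heavy for any layer, take any \(j\in\Lambda^+(V)\) and any \(u\). The pairs \((u,v)\) with \((u^{-1}\mid v)\) at least a threshold \(T\sim 2\delta\) force \(v[1]\) to lie in a bounded neighbourhood of \((u^{-1})[1]\) by thinness, so it is cleaner to run the heaviness test on vertices at a fixed scale. Otherwise, take the longest \(\ell\) admitting a heavy vertex, and among such vertices choose \(w\) maximizing the heaviness fraction. Suppose it is \(\alpha_j(w)\); the \(U\)-side case is symmetric. For fixed \(v\in V_j(w)\), the \(u\in U_i\) with \((u^{-1}\mid v)>\ell+O(\delta)\) must have \((u^{-1})[\ell+1]\) close to \(v[\ell+1]\). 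By maximality of \(\ell\), no vertex at level \(\ell+1\) is heavy, so at most half of \(U_i\) has this property, after controlling that closeness forces equality up to a bounded ball. Combining with \(\alpha_j(w)\ge 1/2\) and the fiber bound gives the constant \(\tfrac14\cdot(|B_{4\delta+\varepsilon}||B_{4\delta}|)^{-1}=\eta\).

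The main obstacle is making the last step exact. In a tree, a long Gromov product forces the next letter to agree exactly. In a hyperbolic group it forces only \(d\bigl((u^{-1})[t],v[t]\bigr)\le 4\delta\) (or so). The "not heavy" half-bound for single vertices then does not directly bound the number of bad \(u\). I would try to absorb this by shifting the scale: compare at level \(\ell+1+2\delta\), where by maximality no vertex is heavy, and move the \(B_{4\delta}\) ambiguity into the fiber count of the first step rather than into the counting of bad pairs. Getting the constants to land exactly at \(\eta\) is where I expect the bookkeeping to be hardest.
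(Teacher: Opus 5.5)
\textbf{There is a genuine gap.} It is exactly the step you flag as the main obstacle, and the remedy you suggest does not close it.

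You define heaviness as ``at least $1/2$ of a layer has one exact prefix vertex''. Maximality of the depth then tells you only that each \emph{single} depth-$(t+1)$ cell is light. But for fixed $v$, a bad $u\in U_i$ (one with $(u^{-1}\mid v)>t$) is only known to satisfy
\[
d\bigl(u^{-1}[t+1],v[t+1]\bigr)\le 4\delta+\varepsilon,
\]
by Lemma~\ref{lem:hyp-nearby-prefixes}(2). So the bad $u$ are spread over up to $|B_{4\delta+\varepsilon}|$ cells. Each has mass $<1/2$, but their union can be all of $U_i$.

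Shifting the comparison to depth $t+1+2\delta$ does not help. No lower bound on a Gromov product forces the chosen geodesics to pass through a common vertex, since fellow-travelling geodesics can be vertex-disjoint away from $1$. So at every depth the bad set is a union of boundedly many cells, never a single cell.

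Moving ambiguity into the fiber count cannot compensate either. The fiber bound only controls good pairs, and what fails is the lower bound $\tfrac12|U||W|$ on the \emph{number} of good pairs. Your no-heavy case is left unresolved for the same reason.

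\textbf{The missing idea} is to pay for the ball $B_{4\delta+\varepsilon}$ in the heaviness threshold, not in the fibers. Call $V_j^{+}[p]$ heavy if $|V_j^{+}[p]|\ge |V_j|/(2|B_{4\delta+\varepsilon}|)$, and similarly for $U_i^{-}[p]$.

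\begin{itemize}
\item \emph{Half the pairs are good.} The at most $|B_{4\delta+\varepsilon}|$ light cells containing the bad $u$ then have total size $<\tfrac12|U_i|$, so at least half of $U\times W$ is good.
\item \emph{Fibers cost only $|B_{4\delta}|$.} All $v\in W$ share the exact vertex $p=v[t]$. Lemma~\ref{lem:hyp-move-basepoint} with $A=0$ gives $(u^{-1}\mid v)_p\le\delta$ on good pairs. After the product-preserving shift $(u,v)\mapsto(up,p^{-1}v)$, with $p^{-1}W\subseteq S_{j-t}$, Remark~\ref{rem:hyp-bounded-cancellation-fibers-right} gives fibers of size at most $|B_{2\delta+2\delta}|=|B_{4\delta}|$. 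This is Lemma~\ref{lem:hyp-marker-bounded}.
\item \emph{The constant.} The factors $\tfrac12$ (good pairs), $|B_{4\delta}|^{-1}$ (fibers) and $(2|B_{4\delta+\varepsilon}|)^{-1}$ (size of the heavy cell) multiply to $\eta$.
\item \emph{No-heavy case.} This is just the instance $t=0$, $p=1$.
\end{itemize}

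This requires the good-pair condition to be exactly $(u^{-1}\mid v)\le t$. Your ``up to an additive $O(\delta)$'' slack would change the fiber radius and hence the constant.

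Alternatively, you can keep the threshold $1/2$ if heaviness is defined for the union of cells whose prefixes lie in a $(4\delta+\varepsilon)$-ball. Then all bad $u$ lie in one light neighbourhood-cell at depth $t+1$. Splitting the heavy set into at most $|B_{4\delta+\varepsilon}|$ exact-prefix pieces gives precisely the fiber bound $|B_{4\delta+\varepsilon}|\,|B_{4\delta}|$ you anticipated, and again the constant $\eta$.

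Either way, the theorem then follows from Corollary~\ref{cor:Q-linear-product-shrinkage}, as you say.
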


We prove the theorem after several lemmas.

\begin{lemma}[Nearby prefixes]
\label{lem:hyp-nearby-prefixes}
Let \(x,y\in G\), and let \(s\) be an integer with
\(0\le s\le \min\{|x|,|y|\}\).
\begin{enumerate}[label=\textup{(\arabic*)}]
    \item If
    \[
    (x\mid y)\ge s,
    \]
    then
    \[
    d(x[s],y[s])\le 4\delta.
    \]
    \item If
    \[
    (x\mid y)>s-1,
    \]
    then
    \[
    d(x[s],y[s])\le 4\delta+\varepsilon.
    \]
\end{enumerate}
\end{lemma}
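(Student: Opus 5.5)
Part (1) follows from the four-point condition \eqref{eqn:hyperbolic} applied twice, based at \(1\). Part (2) reduces to part (1) after moving one step along a chosen geodesic.

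For (1), put \(p=x[s]\) and \(q=y[s]\). Since \(p\) lies on the chosen geodesic from \(1\) to \(x\), we have
\[
(p\mid x)=\tfrac12\bigl(s+|x|-(|x|-s)\bigr)=s,
\]
and likewise \((q\mid y)=s\). Proposition~\ref{prop:easy hyp}\textup{(\ref{easy hyp 1})}, applied to the chain \(p,x,y,q\), gives
\[
(p\mid q)\ge \min\{(p\mid x),(x\mid y),(y\mid q)\}-2\delta\ge s-2\delta .
\]
Since \(|p|=|q|=s\),
\[
d(p,q)=|p|+|q|-2(p\mid q)\le 2s-2(s-2\delta)=4\delta .
\]
Nothing here is delicate. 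We only need \(s\le\min\{|x|,|y|\}\) so that \(x[s]\) and \(y[s]\) are defined.

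For (2), use that Gromov products of vertices lie in \(\tfrac12\mathbb Z\). If \((x\mid y)\ge s\), part (1) already gives the bound. Otherwise \(s-1<(x\mid y)<s\), which forces \((x\mid y)=s-\tfrac12\). This can only happen when \(\varepsilon=1\), since in the bipartite case all vertex Gromov products are integers.

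In that case set \(s'=s-1\ge 0\); here \(s\ge 1\) because \((x\mid y)\ge 0\). Then \((x\mid y)\ge s'\), so part (1) gives \(d(x[s'],y[s'])\le 4\delta\). Consecutive points on a chosen geodesic are adjacent, so the triangle inequality yields
\[
d(x[s],y[s])\le 4\delta+2 .
\]
This is off by one from the claimed \(4\delta+1\), and closing that gap is the main obstacle.

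To close it, I would run the estimate of part (1) directly with the half-integer value. We have \((p\mid x)=s\), \((q\mid y)=s\) and \((x\mid y)=s-\tfrac12\), so
\[
(p\mid q)\ge s-\tfrac12-2\delta ,\qquad d(p,q)\le 2s-2\bigl(s-\tfrac12-2\delta\bigr)=4\delta+1=4\delta+\varepsilon .
\]
So the cleaner route is to skip the reduction to (1) and argue as follows. By half-integrality, \((x\mid y)>s-1\) forces \((x\mid y)\ge s-\tfrac12\), and \((x\mid y)\ge s-\tfrac{\varepsilon}{2}\) in general, since in the bipartite case the product is an integer and so is at least \(s\). The same three-point chain then gives \(d(p,q)\le 4\delta+\varepsilon\). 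I would write the proof in this unified way, noting that \(\delta\in\tfrac12\mathbb Z\) is not needed here.
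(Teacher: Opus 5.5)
Your proposal is correct and, in its final unified form, is exactly the paper's argument. Part (1) runs the chain $x[s],x,y,y[s]$ through Proposition~\ref{prop:easy hyp}(1). Part (2) uses half-integrality to get $(x\mid y)\ge s-\varepsilon/2$ and then reruns the same chain; the detour through $s-1$, which only gives $4\delta+2$, can simply be deleted from the write-up.
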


\begin{proof}
Since \(x[s]\) lies on a geodesic from \(1\) to \(x\),
\[
(x[s]\mid x)=s.
\]
Similarly, \((y\mid y[s])=s\).

For (1), Proposition~\ref{prop:easy hyp}(\ref{easy hyp 1}) gives
\[
(x[s]\mid y[s])
\ge
\min\bigl((x[s]\mid x),(x\mid y),(y\mid y[s])\bigr)-2\delta
\ge s-2\delta.
\]
Since \(|x[s]|=|y[s]|=s\), we get
\[
d(x[s],y[s])=2s-2(x[s]\mid y[s])\le 4\delta.
\]

For (2), note that \((x\mid y)>s-1\) implies
\[
        (x\mid y)\ge s-\frac{\varepsilon}{2}.
\]
Indeed, if all Gromov products are integral this is immediate with
\(\varepsilon=0\), and otherwise all Gromov products still lie in
\(\frac12\mathbb Z\).  Applying the same argument as above gives
\[
(x[s]\mid y[s])\ge s-\frac{\varepsilon}{2}-2\delta.
\]
Hence
\[
d(x[s],y[s])\le 4\delta+\varepsilon.
\]
\end{proof}

\begin{lemma}[Moving the basepoint past a prefix]
\label{lem:hyp-move-basepoint}
Let $x,y\in G$, let $p=y[t]$, and suppose
\[
(x\mid y)\le t+A.
\]
Then
\[
(p^{-1}x\mid p^{-1}y)\le A+\delta.
\]
\end{lemma}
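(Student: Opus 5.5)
The plan is to use left-invariance to turn the left-hand side into a Gromov product based at \(p\), express it through Gromov products based at \(1\), and then apply the \(\delta\)-hyperbolicity inequality \eqref{eqn:hyperbolic} once.

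\emph{Step 1: change of basepoint.} By left-invariance of the word metric (Proposition~\ref{prop:easy metric}\textup{(\ref{easy metric 1})} with \(t=p\)), we have \((p^{-1}x\mid p^{-1}y)=(x\mid y)_p\). Expanding all three Gromov products via the definition gives the identity
\[
        (x\mid y)_p=|p|-(x\mid p)-(y\mid p)+(x\mid y).
\]
Indeed, the terms \(|x|,|y|\) cancel and the two copies of \(|p|\) cancel against the leading \(|p|\), leaving \(\tfrac12\bigl(d(x,p)+d(y,p)-d(x,y)\bigr)\).

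\emph{Step 2: use that \(p\) lies on the geodesic to \(y\).} Since \(p=y[t]\) lies on the chosen geodesic from \(1\) to \(y\), we have \(|p|=t\) and \(d(p,y)=|y|-t\). Hence \((y\mid p)=t\), and the identity simplifies to
\[
        (x\mid y)_p=(x\mid y)-(x\mid p).
\]
So it suffices to bound \((x\mid p)\) from below.

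\emph{Step 3: hyperbolicity and a case split.} By \eqref{eqn:hyperbolic} with basepoint \(1\),
\[
        (x\mid p)\ge\min\{(x\mid y),(y\mid p)\}-\delta=\min\{(x\mid y),t\}-\delta .
\]
\begin{itemize}
\item If \((x\mid y)\ge t\), then \((x\mid p)\ge t-\delta\). Therefore \((x\mid y)_p\le (t+A)-(t-\delta)=A+\delta\).
\item If \((x\mid y)<t\), then \((x\mid p)\ge (x\mid y)-\delta\). Therefore \((x\mid y)_p\le\delta\), which is at most \(A+\delta\) provided \(A\ge 0\).
\end{itemize}

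\emph{Expected obstacle.} The only delicate point is the sign of \(A\) in the second case. Since \((x\mid y)_p\ge 0\) always, the statement cannot hold for \(A<-\delta\), so it is implicitly meant for \(A\ge 0\). I would state that convention explicitly and check that it holds at the places where the lemma is applied; there \(A\) is a nonnegative cancellation threshold such as \(0\) or a multiple of \(\delta\). Apart from this, the argument is a routine computation.
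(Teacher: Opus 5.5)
Your proposal is correct and matches the paper's proof. Both use $(y\mid p)=t$ to get $(x\mid y)_p=(x\mid y)-(x\mid p)$, then apply hyperbolicity once to obtain $(x\mid p)\ge\min\{(x\mid y),t\}-\delta$, which gives $(x\mid y)_p\le\max\{0,(x\mid y)-t\}+\delta$. Your caveat about $A\ge 0$ is right: the paper's last step silently makes the same assumption, which is harmless because the lemma is only invoked with $A=0$ (in Lemma~\ref{lem:hyp-marker-bounded}).
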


\begin{proof}
Let $a=(x\mid y)$.  Since $p=y[t]$, we have $(y\mid p)=t$.  By
$\delta$-hyperbolicity,
\[
(x\mid p)\ge \min\bigl((x\mid y),(y\mid p)\bigr)-\delta
=\min(a,t)-\delta.
\]
Now
\[
(p^{-1}x\mid p^{-1}y)=(x\mid y)_p.
\]
Also $d(p,y)=|y|-t$, and, using Proposition~\ref{prop:easy metric}(\ref{easy metric 1}),
\[
d(p,x)=|x|+t-2(x\mid p).
\]
Therefore
\[
(x\mid y)_p
=\frac12\bigl(d(p,x)+d(p,y)-d(x,y)\bigr)
=(x\mid y)-(x\mid p).
\]
It follows that
\[
(x\mid y)_p\le a-\min(a,t)+\delta
=\max(0,a-t)+\delta.
\]
Since $a\le t+A$, this gives
\[
(p^{-1}x\mid p^{-1}y)=(x\mid y)_p\le A+\delta.
\]
\end{proof}

\begin{lemma}[Bounded fibers under bounded cancellation]
\label{lem:hyp-bounded-cancellation-fibers}
Let \(E\subseteq S_r\), let \(F\subseteq G\) be finite, and fix
\(M_0\ge 0\) such that \(2M_0\) is an integer.  Let
\[
P_{M_0}:=\{(a,b)\in E\times F:(a^{-1}\mid b)\le M_0\}.
\]
Then the multiplication map
\[
P_{M_0}\longrightarrow G, \qquad (a,b)\mapsto ab,
\]
has fibers of size at most \(|B_{2M_0+2\delta}|\).
\end{lemma}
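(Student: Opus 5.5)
Fix \(g\in G\) and consider the fiber \(\{(a,b)\in P_{M_0}: ab=g\}\). Since \(b=a^{-1}g\) is determined by \(a\), it suffices to show that all first coordinates \(a\) occurring in the fiber lie in a single translate of \(B_{2M_0+2\delta}\). The plan is to show that each such \(a\) lies within distance \(2M_0+2\delta\) of one fixed point \(g[s]\) on the chosen geodesic from \(1\) to \(g\), with \(s\) depending only on \(r\) and \(|g|\), not on the pair.

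First we locate \(a\) relative to \(g\). By Proposition~\ref{prop:easy metric}\textup{(\ref{easy metric 4})} we have \((ab\mid a)=|a|-(a^{-1}\mid b)\ge r-M_0\), that is, \((g\mid a)\ge r-M_0\). By Proposition~\ref{prop:easy metric}\textup{(\ref{easy metric 2})}, \(|g|=r+|b|-2(a^{-1}\mid b)\ge r+|b|-2M_0\). Since \(d(a,g)=|b|\), the point \(a\) is roughly at distance \(r\) from \(1\) and about \(|g|-r\) from \(g\), up to \(O(M_0)\). Set \(s=\max\{0,\min\{r-M_0,|g|\}\}\) after rounding to an integer, and compare \(a\) with \(p=g[s]\).

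Next we bound \(d(a,p)\). We use the identity \(d(a,p)=|a|+|p|-2(a\mid p)=r+s-2(a\mid p)\). Hyperbolicity gives \((a\mid p)\ge\min\{(a\mid g),(g\mid p)\}-\delta\ge s-\delta\), because \((g\mid p)=s\) and \((a\mid g)\ge r-M_0\ge s\). Hence \(d(a,p)\le r-s+2\delta\le M_0+2\delta\) plus a half-integer rounding term. This bounds \(d(a,p)\) by \(2M_0+2\delta\) with room to spare, since \(r-s\le M_0+\tfrac12\) when \(s=\lfloor r-M_0\rfloor\).

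The main obstacle is the degenerate case in which \(r-M_0\) exceeds \(|g|\), or is negative. If \(s=0\), then \(r\le M_0\) and \(d(a,1)=r\le 2M_0+2\delta\) directly. If \(r-M_0>|g|\), we instead use \(|g|\ge r+|b|-2M_0\), which gives \(r-|g|\le 2M_0\). Taking \(p=g\), we get \(d(a,g)=|b|\le |g|-r+2M_0\le 2M_0\). In every case \(a\in pB_{2M_0+2\delta}\) for some \(p\) depending only on \(g\), \(r\) and \(M_0\), so the fiber has size at most \(|B_{2M_0+2\delta}|\). The integrality assumption on \(2M_0\) and \(2\delta\) is used only to absorb the rounding in the choice of \(s\) within the stated radius.
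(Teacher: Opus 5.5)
Your argument is correct, but it is organized differently from the paper's.

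The paper never introduces a reference point. For two pairs $(a,b),(a',b')$ in the fiber over $w$, it uses $(a\mid w),(a'\mid w)\ge r-M_0$ and a single application of $\delta$-hyperbolicity to get $(a\mid a')\ge r-M_0-\delta$. Hence $d(a,a')=2r-2(a\mid a')\le 2M_0+2\delta$, so all first coordinates of the fiber lie in $aB_{2M_0+2\delta}$ for any one of them. This needs no chosen geodesic, no rounding and no case analysis.

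You instead centre the fiber at the fixed vertex $g[s]$ with $s=\lfloor r-M_0\rfloor$. This costs you the rounding and the case split, but it gives a radius bound instead of a diameter bound. Your computation shows $d(a,g[s])\le r-s+2\delta=\lceil M_0\rceil+2\delta$, so fibers actually have size at most $|B_{\lceil M_0\rceil+2\delta}|$. The stated bound then follows from $\lceil M_0\rceil\le 2M_0$ for $M_0\in\frac12\mathbb Z_{\ge 0}$. This inequality is an equality for $M_0\in\{0,\frac12\}$, so there is no room to spare in those cases, but nothing breaks.

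Two small simplifications are available. First, your degenerate case $r-M_0>|g|$ is vacuous for a nonempty fiber, since $r-M_0\le (a\mid g)\le \min\{|a|,|g|\}\le |g|$ by Proposition~\ref{prop:easy metric}. Second, the only genuinely separate case is $r<M_0$, where $d(a,1)=r$ suffices. The values $r-M_0\in\{0,\frac12\}$ also give $s=0$, but they are covered by your main computation, not by the clause ``if $s=0$ then $r\le M_0$''.
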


\begin{proof}
Suppose
\[
ab=a'b'=w
\]
with \((a,b),(a',b')\in P_{M_0}\).  Since \(a,a'\in S_r\),
\[
|a|=|a'|=r.
\]
By Proposition~\ref{prop:easy metric}\textup{(\ref{easy metric 4})}, and by
symmetry of the Gromov product,
\[
(a\mid w)=(a\mid ab)=(ab\mid a)
=|a|-(a^{-1}\mid b)\ge r-M_0.
\]
Similarly,
\[
(a'\mid w)\ge r-M_0.
\]
By \(\delta\)-hyperbolicity,
\[
(a\mid a')\ge \min\bigl((a\mid w),(a'\mid w)\bigr)-\delta
\ge r-M_0-\delta.
\]
Therefore
\[
d(a,a')=2r-2(a\mid a')\le 2M_0+2\delta.
\]
So, once \(w\) and one possible value of \(a\) are fixed, every other possible value
of \(a'\) lies in \(aB_{2M_0+2\delta}\).  There are at most
\(|B_{2M_0+2\delta}|\) possibilities for \(a'\), and then \(b'\) is forced by
\[
b'=a'^{-1}w.
\]
\end{proof}

\begin{remark}
\label{rem:hyp-bounded-cancellation-fibers-right}
The same conclusion holds if \(E\subseteq G\) is finite and \(F\subseteq S_s\).
Indeed, apply Lemma~\ref{lem:hyp-bounded-cancellation-fibers} to the inverted
pairs
\[
(b^{-1},a^{-1})\in F^{-1}\times E^{-1}.
\]
The cancellation condition is preserved, since
\[
((b^{-1})^{-1}\mid a^{-1})=(b\mid a^{-1})=(a^{-1}\mid b),
\]
and
\[
b^{-1}a^{-1}=(ab)^{-1}.
\]
Thus fibers of the original multiplication map are in bijection with fibers of
the inverted multiplication map.
\end{remark}

\begin{lemma}[Marker bounded multiplicity]
\label{lem:hyp-marker-bounded}
Let \(j\ge t\ge 0\), let \(p\in S_t\), and let \(W\subseteq S_j\) be such that
\[
v[t]=p \qquad \text{for every } v\in W.
\]
Let \(E\subseteq G\) be finite, and let
\[
P:=\{(u,v)\in E\times W:(u^{-1}\mid v)\le t\}.
\]
Then the multiplication map
\[
P\longrightarrow G,\qquad (u,v)\mapsto uv,
\]
has fibers of size at most \(|B_{4\delta}|\).
\end{lemma}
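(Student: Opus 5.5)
Fix a fiber: suppose \((u,v),(u',v')\in P\) with \(uv=u'v'=w\). I will show that the right factor \(v'\) is confined to a ball of radius \(4\delta\) around \(v\), more precisely that \(d(v[\cdot],\ldots)\) is controlled so that \(p^{-1}v'\) lies within \(4\delta\) of \(p^{-1}v\). Since \(u'=w v'^{-1}\) is then determined by \(v'\), the fiber has at most \(|B_{4\delta}|\) elements. This mirrors Proposition~\ref{prop:marker-injectivity}, where the role of the common prefix \(p\) is played by the marker \(v[t]=p\).

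The key move is to rewrite the products through the marker. Write \(v=p\,\tilde v\) and \(v'=p\,\tilde v'\), where \(\tilde v=p^{-1}v\) and \(\tilde v'=p^{-1}v'\) both lie in \(S_{j-t}\), since \(v\) is on a geodesic through \(p\). Then \(w=(up)\tilde v=(u'p)\tilde v'\). By Lemma~\ref{lem:hyp-move-basepoint}, applied with \(x=u^{-1}\), \(y=v\) and \(A=0\) (the hypothesis \((u^{-1}\mid v)\le t\) is exactly the one required), we get
\[
\bigl(p^{-1}u^{-1}\mid p^{-1}v\bigr)=\bigl((up)^{-1}\mid \tilde v\bigr)\le\delta ,
\]
and likewise \(((u'p)^{-1}\mid\tilde v')\le\delta\). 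Thus both factorizations of \(w\) have cancellation at most \(\delta\) at the junction. Their right factors \(\tilde v,\tilde v'\) have the same length \(j-t\), and the left factors are \(up,u'p\).

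Next I run the argument of Lemma~\ref{lem:hyp-bounded-cancellation-fibers} on the right-hand side, in the form of Remark~\ref{rem:hyp-bounded-cancellation-fibers-right}. Inverting gives \(w^{-1}=\tilde v^{-1}(up)^{-1}\). By Proposition~\ref{prop:easy metric}\textup{(\ref{easy metric 4})},
\[
(\tilde v^{-1}\mid w^{-1})=(j-t)-\bigl(\tilde v\mid (up)^{-1}\bigr)\ge (j-t)-\delta ,
\]
and the same bound holds for \(\tilde v'\). By \(\delta\)-hyperbolicity, \((\tilde v^{-1}\mid\tilde v'^{-1})\ge (j-t)-2\delta\), so \(d(\tilde v^{-1},\tilde v'^{-1})\le 4\delta\). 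Equivalently \(\tilde v'\in B_{4\delta}\,\tilde v\) (multiplication on the left, since \(\tilde v'\tilde v^{-1}\) has length at most \(4\delta\)). Hence \(v'=p\tilde v'\) takes at most \(|B_{4\delta}|\) values, and \(u'=wv'^{-1}\) is forced by \(v'\).

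The main obstacle is getting the constant down to exactly \(4\delta\) rather than something like \(2\delta+2\delta\) plus an additional shift. The gain comes from applying Lemma~\ref{lem:hyp-move-basepoint} with \(A=0\), which gives junction cancellation at most \(\delta\). That bound then gives \(2\cdot\delta+2\delta=4\delta\) through the bounded-cancellation fiber argument with \(M_0=\delta\), yielding the radius \(2M_0+2\delta=4\delta\). I would double-check that the Gromov products here are based at \(1\) after the left translation by \(p^{-1}\).
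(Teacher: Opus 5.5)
Your proof is correct and follows essentially the paper's argument. You pass through the marker via $(u,v)\mapsto(up,p^{-1}v)$, use Lemma~\ref{lem:hyp-move-basepoint} with $A=0$ to bound the junction cancellation by $\delta$, and then run the right-sided bounded-cancellation fiber argument with $M_0=\delta$; the paper cites this last step from Remark~\ref{rem:hyp-bounded-cancellation-fibers-right} rather than re-deriving it. One small imprecision is in your opening sketch: ``$p^{-1}v'$ lies within $4\delta$ of $p^{-1}v$'' should read $p^{-1}v'\in B_{4\delta}\,p^{-1}v$, which is exactly what your detailed computation correctly proves.
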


\begin{proof}
Set
\[
E':=Ep=\{up:u\in E\},
\qquad
W':=p^{-1}W=\{p^{-1}v:v\in W\}.
\]
Since \(p=v[t]\) for every \(v\in W\), we have
\[
W'\subseteq S_{j-t}.
\]
The map
\[
E\times W\longrightarrow E'\times W',
\qquad
(u,v)\mapsto (up,p^{-1}v),
\]
is a bijection, and it preserves products:
\[
(up)(p^{-1}v)=uv.
\]

Let \(P'\) be the image of \(P\) under this bijection.  We claim that
\[
P'\subseteq
\{(a,q)\in E'\times W':(a^{-1}\mid q)\le \delta\}.
\]
Indeed, if \((a,q)=(up,p^{-1}v)\) with \((u,v)\in P\), then
\[
a^{-1}=p^{-1}u^{-1},
\qquad
q=p^{-1}v.
\]
Since \(p=v[t]\) and \((u^{-1}\mid v)\le t\), Lemma~\ref{lem:hyp-move-basepoint}
with \(A=0\) gives
\[
(a^{-1}\mid q)
=(p^{-1}u^{-1}\mid p^{-1}v)
=(u^{-1}\mid v)_p
\le \delta.
\]

Now apply the right-sided form from
Remark~\ref{rem:hyp-bounded-cancellation-fibers-right}, with the second factor
\(W'\subseteq S_{j-t}\) and with \(M_0=\delta\).
Since \(2\delta\) is an
integer, multiplication has fibers of size at most
\[
|B_{2\delta+2\delta}|=|B_{4\delta}|
\]
on
\[
\{(a,q)\in E'\times W':(a^{-1}\mid q)\le \delta\},
\]
and hence also on its subset \(P'\).  Because the bijection above preserves
products, the original multiplication map on \(P\) also has fibers of size at
most \(|B_{4\delta}|\).
\end{proof}

We now introduce the analogue of the prefix and suffix cells used in the free group
argument.  For $E\subseteq S_m$ and $p\in S_t$, define
\[
E[p]:=\{e\in E:e[t]=p\}.
\]
For a finite set $U\subseteq G$, let
\[
U_i:=U\cap S_i,
\qquad
\Lambda^+(U):=\{i\ge 1:U_i\ne\emptyset\}.
\]
For $p\in S_t$, define the inverse-suffix cell
\[
U_i^{-}[p]:=\{u\in U_i:u^{-1}[t]=p\}.
\]
Similarly, for a finite set $V\subseteq G$, write $V_j=V\cap S_j$ and define
\[
V_j^{+}[p]:=\{v\in V_j:v[t]=p\}.
\]
For \(t\ge 1\), we say that \(V_j^{+}[p]\) is heavy if
\[
|V_j^{+}[p]|\ge \frac{|V_j|}{2|B_{4\delta+\varepsilon}|},
\]
and similarly \(U_i^{-}[p]\) is heavy if
\[
|U_i^{-}[p]|\ge \frac{|U_i|}{2|B_{4\delta+\varepsilon}|}.
\]
\begin{lemma}[Heavy-prefix lemma]
\label{lem:hyp-heavy-prefix}
$\delta$-hyperbolic groups satisfy Property $Q(\zeta)$ with $\zeta = \eta(\delta)$. 
That is, let $U,V\subseteq G$ be finite non-empty sets, and suppose that
$\Lambda^+(U)\cup\Lambda^+(V)$ is non-empty.  Then either there exists
$i\in\Lambda^+(U)$ such that
\[
|U_iV|\ge \eta |U_i||V|,
\]
or there exists $j\in\Lambda^+(V)$ such that
\[
|UV_j|\ge \eta |U||V_j|.
\]
\end{lemma}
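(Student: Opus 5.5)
We will mimic the proof of Lemma~\ref{lem:heavy-prefix}, with prefix cells $V_j^+[p]$ and inverse-suffix cells $U_i^-[p]$ replacing words with a given prefix or suffix. Lemma~\ref{lem:hyp-marker-bounded} replaces Proposition~\ref{prop:marker-injectivity}, and Lemma~\ref{lem:hyp-nearby-prefixes} replaces the rigid letter-by-letter structure of reduced words.

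\textbf{Case 1: no heavy cell with $t\ge1$.} Fix $j\in\Lambda^+(V)$ and take $p=1$, $t=0$; if $\Lambda^+(V)$ is empty, use the symmetric argument with $U$. Let $P$ be the set of pairs $(u,v)\in U\times V_j$ with $(u^{-1}\mid v)\le 0$. Lemma~\ref{lem:hyp-marker-bounded} shows that multiplication on $P$ has fibers of size at most $|B_{4\delta}|$, so $|UV_j|\ge |P|/|B_{4\delta}|$. It remains to show $|P|\ge\frac12|U||V_j|$.

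Fix $u\ne1$. If $(u^{-1}\mid v)>0=1-1$, Lemma~\ref{lem:hyp-nearby-prefixes}(2) with $s=1$ gives $d(u^{-1}[1],v[1])\le 4\delta+\varepsilon$. Hence $v[1]$ lies in the ball of radius $4\delta+\varepsilon$ around the fixed point $u^{-1}[1]$, so $v$ lies in one of at most $|B_{4\delta+\varepsilon}|$ cells $V_j^+[q]$ with $q\in S_1$. Each such cell is light, so the number of bad $v$ is less than $|B_{4\delta+\varepsilon}|\cdot\frac{|V_j|}{2|B_{4\delta+\varepsilon}|}=\frac12|V_j|$. This gives $|P|\ge\frac12|U||V_j|$ and therefore $|UV_j|\ge \frac{1}{2|B_{4\delta}|}|U||V_j|\ge\eta|U||V_j|$.

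\textbf{Case 2: some heavy cell with $t\ge1$ exists.} Let $\ell$ be the largest $t$ for which some heavy cell exists (cells have $t\le$ the layer radius, and there are finitely many layers, so $\ell$ is finite). Assume the heavy cell is $V_j^+[p]$ with $p\in S_\ell$; the $U$ case follows by the inversion trick of Remark~\ref{rem:hyp-bounded-cancellation-fibers-right}. Let $W=V_j^+[p]$ and let $P=\{(u,v)\in U\times W:(u^{-1}\mid v)\le\ell\}$. Lemma~\ref{lem:hyp-marker-bounded} gives $|UW|\ge|P|/|B_{4\delta}|$.

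Fix $v\in W$ and a layer $U_i$. We count the $u\in U_i$ with $(u^{-1}\mid v)>\ell$, which forces $|v|,|u|\ge\ell+1$ whenever the count is nonzero. Apply Lemma~\ref{lem:hyp-nearby-prefixes}(2) with $s=\ell+1$. It gives $d(u^{-1}[\ell+1],v[\ell+1])\le4\delta+\varepsilon$, so $u$ lies in one of at most $|B_{4\delta+\varepsilon}|$ cells $U_i^-[q]$ with $q\in S_{\ell+1}$. None of these cells is heavy, by maximality of $\ell$. Hence fewer than $\frac12|U_i|$ elements of $U_i$ are bad, and summing over layers (the layer $i=0$ is trivially good) gives $|P|\ge\frac12|U||W|$.

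Heaviness gives $|W|\ge |V_j|/(2|B_{4\delta+\varepsilon}|)$. Therefore
\[
|UV_j|\ge|UW|\ge\frac{|U||V_j|}{4|B_{4\delta+\varepsilon}||B_{4\delta}|}=\eta|U||V_j|.
\]

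\textbf{Main obstacle.} The delicate step is the maximality bookkeeping. In the free case, the choice of $w$ maximizing $\alpha,\beta$ among words of length $\ell$ was needed so that $wa$ is light, where $wa$ is a one-letter extension. Here we instead take $\ell$ maximal over \emph{all} heavy cells, on both the $U$ and $V$ sides, so every cell at level $\ell+1$ is automatically light, and the constant $|B_{4\delta+\varepsilon}|$ in the heaviness threshold absorbs the fuzziness of Lemma~\ref{lem:hyp-nearby-prefixes}. This choice is what makes the heavy-light dichotomy above go through. Along the way we must also check the edge cases $|v|=\ell$ and $u=1$, and confirm that heaviness is only required at $t\ge1$.
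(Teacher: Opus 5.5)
Your proposal is correct and follows essentially the paper's proof: a heavy cell of maximal depth, good pairs defined by $(u^{-1}\mid v)\le t$, the light cells at depth $t+1$ bounding the bad pairs through Lemma~\ref{lem:hyp-nearby-prefixes}(2), and Lemma~\ref{lem:hyp-marker-bounded} bounding the fibers. The differences are minor. In the no-heavy-cell case you fix $u$ and count bad $v\in V_j$ using depth-one $V$-cells, whereas the paper folds this case into the $V$-heavy case with $t=0$ and fixes $v$ instead. You also obtain the $U$-heavy case from the symmetry $(U,V)\mapsto(V^{-1},U^{-1})$, which swaps $U$-cells and $V$-cells at the same depth, rather than writing it out as the paper does.
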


\begin{proof}
If $V\subseteq S_0=\{1\}$, choose $i\in\Lambda^+(U)$.  Then
\[
|U_iV|=|U_i|\ge \eta |U_i||V|,
\]
since $|V|=1$ and $\eta\le 1$.  The case $U\subseteq S_0$ is symmetric.  We may
therefore assume that both $U$ and $V$ have non-empty positive layers.

Choose a heavy cell of maximal depth among all cells of the forms
$U_i^{-}[p]$ and $V_j^{+}[p]$.  If no positive-depth heavy cell exists, choose any
$j\in\Lambda^+(V)$ and set
\[
p=1, \qquad t=0, \qquad W=V_j.
\]
We consider two cases.

First suppose that either there is no heavy cell, or that the chosen heavy cell lies
in $V$.  Thus
\[
W=V_j^{+}[p]
\]
for some $p\in S_t$, where either $t=0$ and $W=V_j$, or $t\ge 1$ and
$|W|\ge \frac{1}{2|B_{4\delta+\varepsilon}|} |V_j|$.

We prove that
\begin{equation}\label{eq:hyp-UW-large}
|UW|\ge \frac{1}{2|B_{4\delta}|}|U||W|.
\end{equation}
Call a pair \((u,v)\in U\times W\) good if
\[
(u^{-1}\mid v)\le t.
\]
Otherwise call it bad.  Fix \(v\in W\) and fix a layer \(U_i\).  If \(u\in U_i\) is bad,
then
\[
(u^{-1}\mid v)>t.
\]
Since a Gromov product is bounded above by the distance from the basepoint to
each of its two entries, the prefixes at depth \(t+1\) are defined.  By
Lemma~\ref{lem:hyp-nearby-prefixes}(2), applied to \(u^{-1}\) and \(v\) with
\(s=t+1\), we have
\[
d(u^{-1}[t+1],v[t+1])\le 4\delta+\varepsilon.
\]
Thus the bad elements $u\in U_i$ lie in the union of the cells
\[
\{U_i^{-}[r]:r\in S_{t+1}, d(r,v[t+1])\le 4\delta+\varepsilon\}.
\]
There are at most $|B_{4\delta+\varepsilon}|$ such values of $r$.  By maximality of the chosen heavy cell, no cell of depth $t+1$ is heavy.  In the no-heavy case this is true by
assumption.  Hence each of these cells has size strictly less than $\frac{|U_i|}{2|B_{4\delta+\varepsilon}|}$.
The number of bad elements in $U_i$ is therefore less than
\[
|B_{4\delta+\varepsilon}|\cdot
\frac{|U_i|}{2|B_{4\delta+\varepsilon}|}
=\frac12|U_i|.\]
So, for this fixed $v$, at least half of each layer $U_i$ is good.  Summing over
$i$ and then over $v\in W$, the number of good pairs in $U\times W$ is at least
\[
\frac12 |U||W|.
\]
On these good pairs, Lemma~\ref{lem:hyp-marker-bounded} applies, because every
$v\in W$ satisfies $v[t]=p$.  Hence multiplication has fibers of size at most \(|B_{4\delta}|\) on the good pairs, and this proves \eqref{eq:hyp-UW-large}.

If $t=0$, then $W=V_j$, and \eqref{eq:hyp-UW-large} gives
\[
|UV_j|\ge \frac{1}{2|B_{4\delta}|}|U||V_j|
\ge \eta |U||V_j|.
\]
If $t\ge 1$, then $|W|\ge \frac{1}{2|B_{4\delta+\varepsilon}|} |V_j|$, and so
\[
|UV_j|\ge |UW|
\ge \frac{1}{2|B_{4\delta}|}|U||W|
\ge
\frac{1}{4|B_{4\delta+\varepsilon}|\,|B_{4\delta}|}
|U||V_j|
=\eta |U||V_j|.
\]
This gives the second alternative of the lemma.

It remains to handle the case where the chosen heavy cell lies in $U$.  Thus
\[
W=U_i^{-}[p]
\]
for some $p\in S_t$, $t\ge 1$, and
\[
|W|\ge \frac{1}{2|B_{4\delta+\varepsilon}|} |U_i|.
\]
We prove that
\begin{equation}\label{eq:hyp-WV-large}
|WV|\ge \frac{1}{2|B_{4\delta}|}|W||V|.
\end{equation}

Call a pair \((u,v)\in W\times V\) good if \((u^{-1}\mid v)\le t\), and bad
otherwise.  Fix \(u\in W\) and fix a layer \(V_j\).  If \(v\in V_j\) is bad, then
\((u^{-1}\mid v)>t\).  Hence the prefixes at depth \(t+1\) are defined, and
Lemma~\ref{lem:hyp-nearby-prefixes}(2), applied again with \(s=t+1\), gives
\[
d(u^{-1}[t+1],v[t+1])\le 4\delta+\varepsilon.
\]
Thus the bad elements $v\in V_j$ lie in the union of the cells
\[
V_j^{+}[r], \qquad r\in S_{t+1}, \qquad d(r,u^{-1}[t+1])\le 4\delta+\varepsilon.
\]
There are at most \(|B_{4\delta+\varepsilon}|\) such values of \(r\), and by
maximality none of these cells is heavy.  Hence each of these cells has size
strictly less than
\[
        \frac{|V_j|}{2|B_{4\delta+\varepsilon}|}.
\]
Thus fewer than half of the elements of \(V_j\) are bad.
Summing over $j$ and then over $u\in W$, the number of good pairs in $W\times V$ is at
least
\[
\frac12 |W||V|.
\]
We now bound the fibers of multiplication on these good pairs.  Since $u\in W$
means $u^{-1}[t]=p$, every element of $W^{-1}$ has chosen prefix $p$ at depth $t$.
Apply Lemma~\ref{lem:hyp-marker-bounded} to the pair of sets $V^{-1}$ and
$W^{-1}$.  A good pair $(u,v)\in W\times V$ corresponds to
$(v^{-1},u^{-1})\in V^{-1}\times W^{-1}$, and
\[
((v^{-1})^{-1}\mid u^{-1})=(v\mid u^{-1})=(u^{-1}\mid v)\le t.
\]
Therefore the map
\[
(v^{-1},u^{-1})\mapsto v^{-1}u^{-1}
\]
has fibers of size at most \(|B_{4\delta}|\) on the corresponding good pairs.  Taking inverses is a bijection, so the original multiplication map $(u,v)\mapsto uv$ also has fibers of size at most \(|B_{4\delta}|\) on the good pairs.  This proves \eqref{eq:hyp-WV-large}.
Since $|W|\ge \frac{1}{2|B_{4\delta+\varepsilon}|} |U_i|$, we get
\[
|U_iV|\ge |WV|
\ge \frac{1}{2|B_{4\delta}|}|W||V|
\ge
\frac{1}{4|B_{4\delta+\varepsilon}|\,|B_{4\delta}|}
|U_i||V|
=\eta |U_i||V|.
\]
This gives the first alternative and completes the proof.
\end{proof}

As an immediate consequence, we get a radius-independent product estimate for sets
supported on spheres.

\begin{corollary}[Products of spherical layers]
\label{cor:hyp-spherical-product}
If $U\subseteq S_n$ and $V\subseteq S_k$ are finite non-empty sets, then
\[
|UV|\ge \eta |U||V|.
\]
\end{corollary}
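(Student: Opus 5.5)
The plan is to apply Lemma~\ref{lem:hyp-heavy-prefix} directly. When each of \(U\) and \(V\) lies in a single sphere, the only layers available in either alternative of the lemma are the whole sets themselves, so either alternative already gives the claimed bound.

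First, the degenerate case. If \(n=0\) or \(k=0\), then one of the sets equals \(\{1\}\). Then \(|UV|=|U||V|\ge \eta|U||V|\), because \(\eta\le 1\). This holds since \(\eta=\frac{1}{4|B_{4\delta+\varepsilon}|\,|B_{4\delta}|}\le \frac14\).

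Next, the main case \(n,k\ge 1\). Here \(U\subseteq S_n\), so \(U_i=U\) when \(i=n\) and \(U_i=\emptyset\) otherwise; hence \(\Lambda^+(U)=\{n\}\). Likewise \(V_j=V\) for \(j=k\), and \(\Lambda^+(V)=\{k\}\). Both sets of positive layers are non-empty, so Lemma~\ref{lem:hyp-heavy-prefix} applies. Its first alternative yields \(i=n\) with \(|U_nV|=|UV|\ge \eta|U||V|\). Its second alternative yields \(j=k\) with \(|UV_k|=|UV|\ge\eta|U||V|\). Either way the corollary follows.

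There is essentially no obstacle in this argument. The only point needing care is to check that the lemma's hypothesis on non-empty positive layers is met, which is why the zero-radius case is handled separately using \(\eta\le 1\). In contrast with Corollary~\ref{cor:Q-linear-product-shrinkage}, no layer peeling is needed, because \(r(U,V)\le 2\) and the rectangle decomposition is trivial.
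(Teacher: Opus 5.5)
Your proposal is correct and is essentially the paper's own argument. The paper also disposes of $n=0$ or $k=0$ immediately, since one factor is $\{1\}$ and $\eta\le 1$. For $n,k\ge 1$ it likewise observes that $U=U_n$ and $V=V_k$, so either alternative of Lemma~\ref{lem:hyp-heavy-prefix} gives $|UV|\ge \eta|U||V|$ directly.
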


\begin{proof}
If $n=0$ or $k=0$, the conclusion is immediate since one of the sets is contained
in $\{1\}$.  If $n,k\ge 1$, then $U=U_n$ and $V=V_k$, so the conclusion follows
from Lemma~\ref{lem:hyp-heavy-prefix}.
\end{proof}

Lemma~\ref{lem:hyp-heavy-prefix} says precisely that \(G\) satisfies property
\(Q(\eta)\).  We therefore obtain the ball--ball estimate from the general
layer-peeling criterion of Section~\ref{subsec:sufficient-linear-shrinkage}.

\begin{proof}[Proof of Theorem~\ref{thm:hyp-linear-product-shrinkage}]
By Lemma~\ref{lem:hyp-heavy-prefix}, the group \(G\) satisfies property \(Q(\eta)\).
The desired estimate follows from Corollary~\ref{cor:Q-linear-product-shrinkage}.
\end{proof}

\begin{remark}
The general layer-peeling theorem gives the stronger layer-sensitive estimate
\[
|UV|\ge \frac{\eta}{|\Lambda^+(U)|+|\Lambda^+(V)|}|U||V|
\]
whenever the denominator is non-zero.  The dependence on the number of non-empty
layers is the direct analogue of the free group ball--ball argument.  Also, when
$G$ contains an element $a$ of infinite order, the linear dependence on the radii is
sharp up to constants: intervals
\[
\{a^{-N},a^{-N+1},\ldots,a^N\}
\]
inside the cyclic subgroup generated by $a$ have product set of size $4N+1$ while
the two factors each have size $2N+1$.
\end{remark}

The preceding corollary gives a radius-independent product estimate
when both factors are supported on fixed spheres.  
Nica \cite{Nica2024} proved a related but different product growth result that
applies when
the left factor is the entire sphere \(S_n\), and the right factor is an arbitrary finite set \(X\subseteq G\), not necessarily supported on one spherical layer.

\begin{theorem}[Expansion of full spheres]\cite[Theorem~2.5]{Nica2024}
\label{thm:hyp-full-sphere-expansion}
Assume that \(G\) is non-elementary.  Then there exists a constant
\(c=c(G,S)>0\) such that, for every \(n\ge 0\) and every finite non-empty
\(X\subseteq G\),
\[
|S_nX|\ge c\,|S_n|\,|X|.
\]
Equivalently, in the notation of \cite[Definition~6.1]{Nica2024},
\[
e(S_n)\ge c|S_n|.
\]
\end{theorem}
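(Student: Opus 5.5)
Following the introduction, the plan is to reuse the bounded-cancellation and bounded-fiber mechanism of Lemma~\ref{lem:hyp-bounded-cancellation-fibers}. The extra input is a ``small shadow'' statement for full spheres. Fix a cancellation threshold \(M\), to be chosen depending only on \(G,S\). The first goal is the following claim: there is \(M\) such that for every \(n\) and every \(x\in G\),
\[
\bigl|\{a\in S_n:(a^{-1}\mid x)>M\}\bigr|\le \tfrac12|S_n|.
\]
Given this, for each \(x\in X\) at least half of \(S_n\) pairs with \(x\) with cancellation at most \(M\). Hence the set \(P_M=\{(a,x)\in S_n\times X:(a^{-1}\mid x)\le M\}\) has \(|P_M|\ge \frac12|S_n||X|\). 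Lemma~\ref{lem:hyp-bounded-cancellation-fibers}, with \(E=S_n\subseteq S_n\) and \(F=X\), then gives
\[
|S_nX|\ge |P_M|/|B_{2M+2\delta}|\ge \tfrac{1}{2|B_{2M+2\delta}|}|S_n||X|.
\]
This yields the theorem with \(c=1/(2|B_{2M+2\delta}|)\). Small \(n\), say \(n\le M\), are handled by the trivial bound \(|S_nX|\ge|X|\ge |X||S_n|/|B_M|\).

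It remains to prove the shadow claim. Replacing \(a\) by \(a^{-1}\), which is a bijection of \(S_n\), I count \(b\in S_n\) with \((b\mid x)>M\). Set \(s=\lceil M\rceil\) and \(q=x[s]\), assuming \(|x|\ge s\); otherwise the set is empty. By Lemma~\ref{lem:hyp-nearby-prefixes}(2), any such \(b\) satisfies \(d(b[s],q)\le 4\delta+1\). So \(b\) lies in the ``shadow'' \(\mathcal O(r)=\{b\in S_n: b[s]=r\}\) for some \(r\in S_s\cap qB_{4\delta+1}\), and there are at most \(|B_{4\delta+1}|\) such \(r\). Each cone satisfies \(\mathcal O(r)\subseteq r\,S_{n-s}\)-type bounds: \(|\mathcal O(r)|\le |S_{n-s}|\) up to a bounded factor, since \(b=r\cdot(r^{-1}b)\) with \(|r^{-1}b|=n-s\). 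The count is therefore at most \(|B_{4\delta+1}|\,|S_{n-s}|\).

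The main obstacle is comparing \(|S_{n-s}|\) with \(|S_n|\). I will use Coornaert's theorem for non-elementary hyperbolic groups: \(|S_m|\asymp \lambda^m\) with \(\lambda>1\), so that \(|S_n|\ge C^{-1}\lambda^{n}\) and \(|S_{n-s}|\le C\lambda^{n-s}\) for \(n\ge s\). Alternatively, I would avoid Coornaert by using the quasi-supermultiplicativity \(|S_{m+\ell}|\ge c_0|S_m||S_\ell|\) of spheres in non-elementary hyperbolic groups, which comes from ping-pong and bounded concatenation. Either route gives
\[
|B_{4\delta+1}||S_{n-s}|\le C^2|B_{4\delta+1}|\lambda^{-s}|S_n|,
\]
and choosing \(M\) so large that \(C^2|B_{4\delta+1}|\lambda^{-s}\le \frac12\) proves the claim. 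Non-elementarity is used exactly here: for virtually cyclic groups the spheres have bounded size and the shadow bound fails, which is consistent with the hypothesis of the theorem. The equivalent formulation \(e(S_n)\ge c|S_n|\) is just a restatement of the inequality in the notation of \cite{Nica2024}.
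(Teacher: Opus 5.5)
Your proposal is correct and is essentially the paper's proof. The paper proves the same small-shadow lemma from Coornaert's estimate $A_-e^{\omega m}\le |S_m|\le A_+e^{\omega m}$ and the injection of each cone $\{s\in S_n: s^{-1}[M]=p\}$ into $S_{n-M}$, then applies Lemma~\ref{lem:hyp-bounded-cancellation-fibers} with $E=S_n$, $F=X$ to get $c=1/(2|B_{2M+2\delta}|)$; the differences are cosmetic: it takes $M$ integral and uses Lemma~\ref{lem:hyp-nearby-prefixes}(1) at depth $M$, so it counts $|B_{4\delta}|$ cells rather than your $|B_{4\delta+1}|$, and your separate treatment of $n\le M$ is unnecessary, since then $(a^{-1}\mid x)\le n\le M$ and the bad set is empty.
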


Nica's proof uses functional analysis and he asked in \cite[Remark~8.4]{Nica2024} whether this bound can be proved in a ``direct, combinatorial way, without appealing to functional
analytic detours." Here we provide such a proof based on Lemma~\ref{lem:hyp-bounded-cancellation-fibers}.
First we need:


\begin{lemma}[Full spheres have small shadows]
\label{lem:hyp-full-sphere-shadow}
Assume that \(G\) is non-elementary.  Then there exists an integer \(M\ge 0\)
such that, for every \(x\in G\) and every \(n\ge 0\),
\[
\left|\{s\in S_n:(s^{-1}\mid x)>M\}\right|\le \frac12 |S_n|.
\]
\end{lemma}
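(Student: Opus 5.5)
The plan is to turn the shadow condition into a statement about prefixes at a fixed depth, and then compare sphere sizes using purely exponential growth. Since \(S_n=S_n^{-1}\), substituting \(s\mapsto s^{-1}\) shows it suffices to bound
\[
N(x,n):=\left|\{s\in S_n:(s\mid x)>M\}\right|.
\]
If \(n\le M\), this set is empty, because \((s\mid x)\le |s|=n\) by Proposition~\ref{prop:easy metric}\textup{(\ref{easy metric 3})}. So assume \(n\ge M+1\). Suppose \((s\mid x)>M\). Then \(|x|\ge(s\mid x)>M\), so the prefixes at depth \(M+1\) are defined. Lemma~\ref{lem:hyp-nearby-prefixes}(2), with \(s_0=M+1\), gives
\[
d\bigl(s[M+1],x[M+1]\bigr)\le 4\delta+\varepsilon .
\]
Hence \(s[M+1]\) ranges over at most \(|B_{4\delta+\varepsilon}|\) points. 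Moreover \(s=s[M+1]\cdot\bigl(s[M+1]^{-1}s\bigr)\), where the second factor lies in \(S_{n-M-1}\). This yields
\[
N(x,n)\le |B_{4\delta+\varepsilon}|\,|S_{n-M-1}|,
\]
uniformly in \(x\).

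It then remains to show that \(|S_{n-M-1}|/|S_n|\) becomes uniformly small as \(M\) grows. For this I would invoke Coornaert's theorem on purely exponential growth: for a non-elementary hyperbolic group there are constants \(0<c\le C\) and \(\lambda>1\) such that
\[
c\lambda^m\le |S_m|\le C\lambda^m
\]
for all \(m\ge 0\). Here non-elementarity is exactly what forces \(\lambda>1\). With this, \(N(x,n)/|S_n|\le |B_{4\delta+\varepsilon}|\,(C/c)\,\lambda^{-(M+1)}\). Choosing \(M\) large enough makes this at most \(1/2\), which finishes the proof.

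The main obstacle is this last step. The paper emphasizes a direct combinatorial argument, and Coornaert's lower bound \(|S_m|\ge c\lambda^m\) is a nontrivial input (Patterson--Sullivan measures). If a more self-contained route is wanted, I would instead try to prove the weaker statement \(|S_n|\ge 2|B_{4\delta+\varepsilon}|\,|S_{n-M-1}|\) directly. The idea is to use two independent loxodromics \(g,h\), given by non-elementarity, to build many elements \(w\) of length about \(M+1\) whose inverses point in well-separated boundary directions. Appending any \(t\in S_{n-M-1}\) with bounded cancellation against \(w\), which holds for most choices by Lemma~\ref{lem:hyp-bounded-cancellation-fibers} and a pigeonhole argument over the separated directions, produces elements of length \(n\pm O(\delta)\) with bounded fibers. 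A final correction from a nearby sphere back to \(S_n\) would also be needed. This alternative is the delicate part, so I would cite Coornaert first.
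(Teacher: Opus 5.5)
Your proposal is correct and is essentially the paper's proof: you bound the bad set by (the number of admissible fixed-depth prefixes near the corresponding prefix of $x$) times the size of a smaller sphere, then use Coornaert's purely exponential growth to make $|S_{n-M}|/|S_n|$ uniformly small. The only differences are cosmetic: you pass to $s^{-1}$ at the start and work at depth $M+1$ with Lemma~\ref{lem:hyp-nearby-prefixes}(2), at a cost of $|B_{4\delta+\varepsilon}|$, whereas the paper works at depth $M$ with part (1), at a cost of $|B_{4\delta}|$; both versions are valid.
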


\begin{proof}
By Coornaert's pure spherical growth theorem \cite{Coornaert1993}, there are
constants \(A_-,A_+>0\) and \(\omega>0\) such that
\[
A_-e^{\omega m}\le |S_m|\le A_+e^{\omega m}
\qquad (m\ge 0).
\]
Let \(Q=|B_{4\delta}|\), and choose an integer \(M\ge 0\) such that
\[
Q\frac{A_+}{A_-}e^{-\omega M}\le \frac12 .
\]
Fix \(x\in G\) and \(n\ge 0\), and put
\[
\mathcal B=\{s\in S_n:(s^{-1}\mid x)>M\}.
\]
If \(\mathcal B=\emptyset\), there is nothing to prove.  Otherwise \(M<n\)
and \(M<|x|\).  For \(s\in\mathcal B\), Lemma~\ref{lem:hyp-nearby-prefixes}
\textup{(1)}, applied to \(s^{-1}\) and \(x\) at depth \(M\), gives
\[
d(s^{-1}[M],x[M])\le 4\delta.
\]
Thus for fixed $x$, there are at most \(Q\) possible values of \(p=s^{-1}[M]\).

For a fixed such \(p\), the map
\[
s\mapsto p^{-1}s^{-1}
\]
injects the set
\[
\{s\in S_n:s^{-1}[M]=p\}
\]
into \(S_{n-M}\), since \(p\) lies on the chosen geodesic from \(1\) to
\(s^{-1}\).  Therefore
\[
|\mathcal B|\le Q|S_{n-M}|
\le Q A_+e^{\omega(n-M)}
\le Q\frac{A_+}{A_-}e^{-\omega M}|S_n|
\le \frac12 |S_n|.
\]
\end{proof}

\begin{proof}[Proof of Theorem ~\ref{thm:hyp-full-sphere-expansion}]
Let \(M\) be as in Lemma~\ref{lem:hyp-full-sphere-shadow}, and set
\[
P=\{(s,x)\in S_n\times X:(s^{-1}\mid x)\le M\}.
\]
For each fixed \(x\in X\), Lemma~\ref{lem:hyp-full-sphere-shadow} gives at
least \(|S_n|/2\) choices of \(s\).  Hence
\[
|P|\ge \frac12 |S_n||X|.
\]
By Lemma ~\ref{lem:hyp-bounded-cancellation-fibers}, applied with
\(E=S_n\), \(F=X\), and \(M_0=M\), multiplication has fibers of size at most
\[
D:=|B_{2M+2\delta}|
\]
on \(P\).  Therefore
\[
|S_nX|\ge \frac{|P|}{D}
\ge \frac{1}{2D}|S_n||X|.
\]
The result follows with \(c=(2D)^{-1}\).
\end{proof}


\section{Product Sets in Groups with Rapid Decay}\label{sec:rd-product-growth}
This final section records a standard analytic source of large product growth.
The relevant product-set estimate is due to Sapir: property RD implies a
one-sided expansion property which he called Rapid Expansion
\cite[Section~4]{sapir2015rapid}.  Since Rapid Expansion immediately implies
large product growth, this gives a broad and robust source of examples.  The
direct geometric arguments of the preceding sections give sharper estimates for
free and hyperbolic groups, but the RD argument applies in much greater
generality.

We begin by fixing the normalization of Rapid Decay that we use.  Let \(G\) be a
finitely generated group, equipped with the word length \(|\cdot|\).  We write
\(\ell^2(G)\) for the Hilbert space of functions \(\xi:G\to\mathbb C\) with
\[
        \|\xi\|_2^2=\sum_{g\in G}|\xi(g)|^2<\infty.
\]
For \(s\in G\), let \(\lambda(s)\) denote the left regular representation on
\(\ell^2(G)\), so that
\[
        (\lambda(s)\xi)(h)=\xi(s^{-1}h).
\]
If \(f:G\to\mathbb C\) is finitely supported, set
\[
        \lambda(f)=\sum_{s\in G}f(s)\lambda(s).
\]

\begin{definition}[Sobolev norms]\label{def:sobolev}
For \(s\ge 0\) and a finitely supported function \(f:G\to\mathbb C\), define
\[
        \|f\|_{\ell^2_s}^2
        =\sum_{g\in G}|f(g)|^2(1+|g|)^{2s}.
\]
Thus \(\|f\|_{\ell^2_0}=\|f\|_2\).
\end{definition}

\begin{definition}[Rapid Decay, {\cite{Jolissaint1990}}]
\label{def:RD}
Let \(G\) be a finitely generated group with word length \(|\cdot|\).  We say that
\(G\) has property \(\mathrm{RD}^s\), with constant \(C>0\), if for every finitely
supported function \(f:G\to\mathbb C\),
\[
        \|\lambda(f)\|\le C\|f\|_{\ell^2_s}.
\]
We say that \(G\) has the Rapid Decay property, or property \(\mathrm{RD}\), if
\(G\) has \(\mathrm{RD}^s\) for some \(s\ge0\) and some \(C>0\).
\end{definition}

The terminology and systematic study of Rapid Decay are due to Jolissaint
\cite{Jolissaint1990}.  The property was first established for free groups by
Haagerup \cite{Haagerup1979}.  It is now known for many classes of groups
appearing in geometric group theory, including hyperbolic groups and groups of
polynomial growth; see, for example, \cite{chatterji2017introRD}.

\begin{definition}[Rapid Expansion, {\cite[Section~4]{sapir2015rapid}}]
\label{def:rapid-expansion}
Let \(G\) be a finitely generated group with word length \(|\cdot|\).  We say
that \(G\) has \emph{Rapid Expansion} if there exists a polynomial \(P\) such
that, for every \(r\ge0\), every finite set \(Y\subseteq B_r\), and every finite
set \(X\subseteq G\),
\[
        |YX|\ge \frac{|Y|\,|X|}{P(r)}.
\]
\end{definition}

Rapid Expansion is stronger than large product growth.  Indeed, if
\(U\subseteq B_n\) and \(V\subseteq B_k\), then applying Rapid Expansion with
\(Y=U\), \(X=V\), and \(r=n\) gives
\[
        |UV|\ge \frac{|U|\,|V|}{P(n)}.
\]
After replacing \(P\) by a non-decreasing polynomial dominating it, this is at
least
\[
        \frac{|U|\,|V|}{P(n+k)}.
\]
Thus Rapid Expansion implies large product growth.  The converse does not
follow formally from the definitions, since large product growth allows the
polynomial loss to depend on the radii of both factors.

Sapir proved that property RD implies Rapid Expansion
\cite[Proposition~4.1]{sapir2015rapid}.  In the normalization of
Definition~\ref{def:RD}, this gives the following explicit large-product-growth
consequence.

\begin{theorem}\label{thm:RD-large-product-growth}
Let \(G\) be a finitely generated group with property \(\mathrm{RD}^s\), with
constant \(C>0\) as in Definition~\ref{def:RD}.  Let \(n,k\ge0\), and let
\[
        U\subseteq B_n,
        \qquad
        V\subseteq B_k
\]
be finite non-empty subsets of \(G\).  Then
\[
        |UV|\ge
        \frac{|U|\,|V|}{C^2(1+\min\{n,k\})^{2s}}.
\]
In particular, \(G\) has large product growth.
\end{theorem}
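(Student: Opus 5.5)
By symmetry under inversion, we may assume \(n\le k\). Indeed, \(|UV|=|(UV)^{-1}|=|V^{-1}U^{-1}|\), and since \(B_n\) is symmetric we have \(V^{-1}\subseteq B_k\) and \(U^{-1}\subseteq B_n\). So it suffices to prove
\[
|UV|\ge \frac{|U||V|}{C^2(1+n)^{2s}}
\]
for \(U\subseteq B_n\), with \(V\subseteq G\) an arbitrary finite set. Swapping the roles of the factors then gives the \(\min\{n,k\}\) form. This one-sided statement is exactly Rapid Expansion with \(Y=U\), \(r=n\).

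The plan has two steps: an operator-norm bound on \(\ell^2\) sums, followed by Cauchy--Schwarz.

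\emph{Step 1 (convolution estimate).} Set \(f=\mathbf 1_{U}\), \(\xi=\mathbf 1_V\) and \(W=UV\). Then
\[
(\lambda(f)\xi)(h)=\sum_{u\in U}\mathbf 1_V(u^{-1}h)=\#\{(u,v)\in U\times V: uv=h\}=:r(h),
\]
the representation function of the product, which is supported on \(W\). By \(\mathrm{RD}^s\),
\[
\|r\|_2\le \|\lambda(f)\|\,\|\xi\|_2\le C\|f\|_{\ell^2_s}\,|V|^{1/2}.
\]
Since \(f\) is supported in \(B_n\), we have \(\|f\|_{\ell^2_s}^2\le (1+n)^{2s}|U|\). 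Hence
\[
\sum_h r(h)^2\le C^2(1+n)^{2s}|U||V|.
\]

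\emph{Step 2 (energy inequality).} We have \(\sum_h r(h)=|U||V|\), and \(r\) is supported on \(W\). By Cauchy--Schwarz,
\[
(|U||V|)^2\le |W|\sum_h r(h)^2\le |W|\,C^2(1+n)^{2s}|U||V|.
\]
Dividing gives \(|UV|\ge |U||V|/(C^2(1+n)^{2s})\), as desired.

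Large product growth then follows because \(C^2(1+\min\{n,k\})^{2s}\le P(n+k)\) with \(P(t)=C^2(1+t)^{2s}\), rounded up to a polynomial when \(s\) is not an integer (for instance, by using the exponent \(\lceil 2s\rceil\)).

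The only delicate point is bookkeeping in Step 1. One must check that the left regular convention \((\lambda(s)\xi)(h)=\xi(s^{-1}h)\) really produces the count of factorizations \(h=uv\) with \(u\) on the left. This is what makes the Sobolev weight fall on the factor lying in \(B_n\). If the convention produced \(VU\) instead, I would apply the inversion symmetry above to correct it.
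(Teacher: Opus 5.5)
Your proof is correct and has the same structure as the paper's: a one-sided bound with the Sobolev weight on the left factor, followed by the inversion identity $|UV|=|V^{-1}U^{-1}|$ to obtain the $\min\{n,k\}$ form. The only difference is that the paper cites Sapir's Proposition~4.1 for the one-sided Rapid Expansion bound, whereas you derive it directly by applying $\lambda(\mathbf 1_U)$ to $\mathbf 1_V$ and using Cauchy--Schwarz (the argument the paper only sketches in its introduction), which also verifies the constant $C^2(1+r)^{2s}$ in this normalization.
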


\begin{proof}
By Sapir's Rapid Expansion estimate \cite[Proposition~4.1]{sapir2015rapid},
translated to the normalization of Definition~\ref{def:RD}, if
\(Y\subseteq B_r\) and \(X\subseteq G\) are finite, then
\[
        |YX|\ge \frac{|Y|\,|X|}{C^2(1+r)^{2s}}.
\]
Applying this with \(Y=U\), \(X=V\), and \(r=n\), we get
\[
        |UV|\ge \frac{|U|\,|V|}{C^2(1+n)^{2s}}.
\]
Applying the same estimate to \(V^{-1}\subseteq B_k\) and
\(U^{-1}\subseteq B_n\), and using
\[
        |UV|=|V^{-1}U^{-1}|,
\]
we also get
\[
        |UV|\ge \frac{|U|\,|V|}{C^2(1+k)^{2s}}.
\]
Taking the better of the two estimates gives
\[
        |UV|\ge
        \frac{|U|\,|V|}{C^2(1+\min\{n,k\})^{2s}}.
\]
Since \((1+\min\{n,k\})^{2s}\le (1+n+k)^{2s}\), this is a polynomial shrinkage
bound in the sense of large product growth.
\end{proof}

\begin{corollary}\label{cor:RD-growth-rate}
Let \(G\) be a finitely generated group with property \(\mathrm{RD}\), and let
\(A\subseteq G\) be a non-empty approximate semigroup.  Then the limit
\[
        \lim_{n\to\infty}\frac{1}{n}\log |A\cap B_n|
\]
exists.
\end{corollary}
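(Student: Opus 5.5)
This corollary follows by chaining two results already established. First, by Definition~\ref{def:RD}, property \(\mathrm{RD}\) means that \(G\) has property \(\mathrm{RD}^s\) for some \(s\ge 0\) and some constant \(C>0\). Theorem~\ref{thm:RD-large-product-growth} then says that for all finite non-empty \(U\subseteq B_n\) and \(V\subseteq B_k\),
\[
        |UV|\ge \frac{|U|\,|V|}{C^2(1+\min\{n,k\})^{2s}}\ge \frac{|U|\,|V|}{C^2(1+n+k)^{2s}}.
\]
So \(G\) has product shrinkage at most \(f(t)=\max\{1,C^2\}(1+t)^{2s}\), in the sense of Definition~\ref{dfn:product-shrinkage}. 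Taking the maximum with \(1\) only weakens the lower bound, and it guarantees \(f\ge 1\). This \(f\) is non-decreasing and takes values in \([1,\infty)\), as the definition requires.

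Second, I check the summability hypothesis of Theorem~\ref{thrm:existence-of-limit}. We have \(\log f(t)=\log\max\{1,C^2\}+2s\log(1+t)=O(\log t)\). Hence
\[
        \sum_{t\ge 1}\frac{\log f(t)}{t^2}<\infty,
\]
because \(\sum_t \log(1+t)/t^2\) converges. Equivalently, \(G\) has large product growth in the sense of Definition~\ref{dfn:large-product-growth}, and the remark following that definition already records that polynomial shrinkage satisfies the summability condition.

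Finally, Theorem~\ref{thrm:existence-of-limit} applies to any non-empty approximate semigroup \(A\subseteq G\) and yields the existence of \(\lim_{n\to\infty}\frac1n\log|A\cap B_n|\) in \(\mathbb R\). There is no real obstacle here. The only point needing care is the normalization: the shrinkage function must be non-decreasing and at least \(1\), and replacing \(C^2\) by \(\max\{1,C^2\}\) handles that.
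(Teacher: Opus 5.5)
Your proposal is correct and matches the paper's proof: Theorem~\ref{thm:RD-large-product-growth} gives polynomial product shrinkage, and Theorem~\ref{thrm:existence-of-limit} then gives the limit. Your check that $f(t)=\max\{1,C^2\}(1+t)^{2s}$ is non-decreasing and at least $1$ is a normalization detail the paper leaves implicit. It is harmless, and in fact $C\ge 1$ holds automatically: apply the RD inequality to the point mass at the identity.
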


\begin{proof}
By Theorem~\ref{thm:RD-large-product-growth}, the group \(G\) has large product
growth.  The conclusion follows from Theorem~\ref{thrm:existence-of-limit}.
\end{proof}

\section*{Acknowledgments}
We thank Bogdan Nica for helpful comments, for pointing us to Sapir's Rapid Expansion property, and for drawing our attention to Remark~8.4 of \cite{Nica2024}.

\bibliographystyle{abbrv} 
\bibliography{bib}
\end{document}